\setlist[enumerate]{label=(\alph*)}
\numberwithin{equation}{section}
\definecolor{Darkgreen}{rgb}{0,0.4,0}
\newtheorem{theorem}{Theorem}[section]
\newtheorem{proposition}[theorem]{Proposition}
\newtheorem{lemma}[theorem]{Lemma}
\newtheorem{corollary}[theorem]{Corollary}
\theoremstyle{definition}
\theoremstyle{remark}
\newtheorem{remark}[theorem]{Remark}
\newcommand{\R}{\mathbb{R}}
\newcommand{\N}{\mathcal{N}}
\newcommand{\T}{\mathbb{T}}
\newcommand{\Id}{\mathrm{Id}}
\newcommand{\defeq}{\coloneqq}
\newcommand{\eqdef}{\eqqcolon}
\newcommand{\D}{\mathop{}\!\mathrm{d}}
\let\phi\varphi
\DeclareMathOperator{\spn}{span}
\DeclareMathOperator{\codim}{codim}
\DeclarePairedDelimiter{\abs}{\lvert}{\rvert}
\DeclarePairedDelimiter{\norm}{\lVert}{\rVert}
\DeclarePairedDelimiter{\set}{\lbrace}{\rbrace}
\DeclarePairedDelimiterX{\inp}[2]{\langle}{\rangle}{#1, #2}
\begin{document}

\title[Critical percolation of the GFF on regular trees]
{Critical and near-critical level-set percolation of the Gaussian free field on regular trees}

\author{Jiří Černý, Ramon Locher}

\begin{abstract}
  For the Gaussian free field on a $(d+1)$-regular tree with $d \geq 2$, we
  study the percolative properties of its level sets in the critical
  and the near-critical regime. In particular, we show the continuity of
  the percolation probability, derive an exact
  asymptotic tail estimate for the cardinality of the connected component
  of the critical level set, and describe the asymptotic behaviour of
  the percolation probability in the near-critical regime.
\end{abstract}

\maketitle

\section{Introduction}
\label{sec:introduction}
In this paper we study the level-set percolation for the discrete
Gaussian free field on regular trees, with focus on its properties in the
critical and the near-critical regime. Our results include the continuity
of the percolation function, an exact asymptotic
tail estimate for the cardinality of the connected component of the critical
level set, and describe the asymptotic behaviour of the percolation
probability in the near-critical case.

The level-set percolation of the Gaussian free field, in particular on $\mathbb
Z^d$, is one of the most important and studied percolation models with long
range dependencies, with first studies dating back to 1980s,
\cite{MolSte1983,LebSal86,BriLebMae87}. In the past decade, a new wave of
results on this model was initiated by \cite{RodSzn13}, where it was shown that,
on $\mathbb Z^d$, this model exhibits a non-trivial percolation phase transition
at a critical level $h^* = h^*(d)$ in any dimension $d\ge 3$. In the subsequent
papers, see for instance \cite{RodSzn13, DreRatSap14, PopRat15, DrePreRod18,
Szn19.2, ChiNit20, GosRodSev22, PanSev22}, the sub- and supercritical phases of
the model were
understood thoroughly, often making use of additional natural critical points in
order to work in a strongly sub-/super-critical regime. In the remarkable paper
\cite{DGRS20}, it was then shown that all those critical points agree with
$h^*$, that is the percolation phase transition is sharp (for a recent,
simpler, and more general proof of the sharpness see \cite{Mui22}).

Compared to the sub- and supercritical regime, the critical and
near-critical regimes are much less understood. On $\mathbb Z^d$, the
situation is to some extend similar to the Bernoulli percolation: it is
not known whether the percolation probability  is continuous at $h^*$,
and the existence of various critical exponents is only conjectured. Only
very recently, \cite{Mui22} provided a first (conjecturally not-optimal)
upper bound on the critical exponent $\beta $ involved in near-critical
asymptotics of the percolation probability.

Incidentally, the critical behaviour is much better understood on the related
model of Gaussian free field on the metric graph of $\mathbb Z^d$, where the
continuity of the percolation function is known \cite{DinWir20,
DrePreRod22}, and various critical exponents were computed in
\cite{DrePreRod21}.

Here, we study the critical behaviour in a considerably simpler
situation, for the level-set percolation of the Gaussian free field on
regular trees. This model was initially investigated in \cite{Szn15}
where the critical value $h^*$ was characterised as the largest
eigenvalue of certain integral operator, and a coupling with random
interlacements was used to derive bounds on  $h^*$, implying in
particular that $0 < h^* < \infty$. Later, in \cite{AbaCer19}, the  sub-
and supercritical phase of the model was studied in detail. Their results
include the continuity of the percolation probability away from the
critical level $h^*$, and rather precise estimates for the cardinality of
the connected components of the level sets in the sub- and super-critical
phase. We complement these results with critical and near-critical
estimates.

Similarly to \cite{Szn15} and, in particular, to \cite{AbaCer19}, we will
take advantage of a connection of the Gaussian free field on regular
trees to certain multi-type branching processes (cf.
Section~\ref{sec:notation_and_results} below). The analysis of these
branching processes is not completely straightforward, as their type
space is uncountable and unbounded and they do not satisfy the conditions
used in the classical literature on branching processes \cite{Harris63, Mode71, AthNey72}.
Fortunately, these conditions can be substituted by certain
hypercontractivity estimates
(cf.~Proposition~\ref{prop:L_h-hypercontractivity} below), which have already been
featured in the previous works.  We are also not aware of any results
about near-critical multi-type branching process which resemble our
analysis of the near-critical behaviour of the percolation probability.

\section{Model and results}

We now define our model. Let $\T$ be the infinite $(d+1)$-regular tree,
with $d\ge 2$, rooted at an arbitrary fixed vertex $o\in \T$. On $\T$ we consider
the Gaussian free field $\phi = (\phi_x)_{x\in \T}$, which is a centred
Gaussian process whose covariance function agrees with the Green function
of the simple random walk on $\T$, see \eqref{def:green-func} below for
the precise definition. We use $P$ to denote the law of this process on
$\R^\T$, and, for $a \in \R$, we write $P_a$ for the conditional
distribution of $\phi$ given that $\phi_o=a$,
\begin{equation}
  \label{def:conditional-prob}
  P_a[\, \cdot\, ] \defeq P[\, \cdot\mid \phi_o = a].
\end{equation}
(For an explicit construction of $P_a$ see \eqref{eq:BP-representation}
and the paragraph below it.) Let further $\bar{o} \in \T$ be an arbitrary
fixed neighbour of the root $o$ and define the forward tree $\T^+$ by
\begin{equation}
  \label{def:Tplus}
  \T^+ \defeq \set{x \in \T :
    \bar{o} \text{ is not contained in the geodesic path between $o$ and $x$}}.
\end{equation}

We analyse the percolation properties of the (super-)level sets of $\phi$
above level $h\in \mathbb R$, that is of
\begin{equation}
  E_{\phi}^h \defeq \set{x \in \T : \phi_x \geq h}.
\end{equation}
In particular, we are interested in the connected component of this set
containing the root $o$,
\begin{equation}
  \label{def:Cxh}
  \mathcal{C}_o^h \defeq \set{y \in \T : y \text{ is connected to } o \text{ in } E_\phi^h},\quad h\in\R.
\end{equation}
The critical height $h^*$ of the level-set percolation is defined by
\begin{equation}
  \label{def:hstar}
  h^* = h^*(d) \defeq \inf\set{h \in \R : P[\abs{\mathcal{C}_o^h} = \infty] = 0}.
\end{equation}

It is well known that $h^*$ is non-trivial, more precisely
$h^*\in (0,\infty)$, see \cite[Corollary 4.5]{Szn15}. Moreover, as proved
in \cite{Szn15}, $h^*$ can be characterised with help of the operator
norms of a certain family of non-negative operators $(L_h)_{h\in \R}$
acting on the space $L^2(\nu )$, where $\nu $ is the centred Gaussian
measure with variance $\sigma^2_\nu = d/(d-1)$. We give more details of this
characterisation in Section~\ref{sec:notation_and_results} below. Here,
we only define $\lambda_h$ to be the largest eigenvalue of $L_h$ and
$\chi_h$ the corresponding normed eigenfunction, and recall that $h^*$ is
the unique solution to
\begin{equation}
  \lambda_{h^*} = 1.
\end{equation}
Since we will mostly deal with the
critical case, we often abbreviate
\begin{equation}
  \chi \defeq \chi_{h^*} \quad \text{and} \quad L \defeq L_{h^*}.
\end{equation}

For $a,h\in \mathbb R$ we further introduce
\emph{conditioned} percolation and forward percolation probabilities by
\begin{equation}
  \label{def:percolation-prob}
  \eta(h, a) \defeq P_a[\abs{\mathcal{C}_o^h} = \infty] \quad \text{and}
  \quad \eta^+(h, a) \defeq P_a[\abs{\mathcal{C}_o^h \cap \T^+} = \infty].
\end{equation}
It is known that both of these functions are identically $0$ when $h>h^*$, and
for $h<h^*$ they are strictly positive iff $a\in [h,\infty)$, see
\cite[Proposition 3.3 and its proof]{Szn15}.

Our first two results consider the behaviour of $\mathcal C_0^{h}$ at the
critical height $h=h^*$. The first interim result shows that there is no
percolation at $h^*$.

\begin{theorem}
  \label{thm:crit_BP_dies}
  For all $a\in \R$,
  \begin{equation}
    \eta(h^*, a) = \eta^+(h^*,a) = 0,
  \end{equation}
  and, as consequence,
  \begin{equation}
    P[\abs{\mathcal{C}_o^{h^*} \cap \T^+} = \infty] = P[\abs{\mathcal{C}_o^{h^*}} = \infty] =
    0.
  \end{equation}
\end{theorem}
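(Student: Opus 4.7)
The plan is to identify the conditioned forward percolation under $P_a$ with a multi-type branching process and to deduce extinction at criticality via strict concavity of its generating functional combined with the Perron eigenfunction $\chi$ of $L$.

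First I would invoke the branching-process representation of $\phi$ on $\T^+$ referenced around \eqref{eq:BP-representation}. By the Markov property, under $P_a$ the values of $\phi$ on $\T^+$ can be sampled vertex by vertex: conditionally on $\phi_v$ at a vertex $v \in \T^+$, the values at the $d$ forward children of $v$ are i.i.d.\ draws from a Gaussian transition kernel, and a vertex belongs to $\mathcal{C}_o^{h^*} \cap \T^+$ iff its value and those of all its ancestors exceed $h^*$. Setting $q(a) \defeq \eta^+(h^*, a)$ and conditioning on the values at the $d$ forward children of $o$, I obtain the fixed-point equation
\begin{equation}
  \label{eq:BPfix}
  1 - q(a) = (1 - R(a))^d, \qquad R(a) \defeq E_a\bigl[q(\phi_{v_1})\, \mathbf 1_{\{\phi_{v_1} \ge h^*\}}\bigr],
\end{equation}
where $v_1$ is any forward child of $o$, and where I write $E_a$ for the expectation under $P_a$.

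The next step exploits strict concavity: on $[0,1]$ one has $1 - (1-r)^d \le d r$, with equality only at $r = 0$. Denoting by $M$ the mean offspring operator $Mf(a) = d\, E_a[f(\phi_{v_1}) \mathbf 1_{\{\phi_{v_1} \ge h^*\}}]$, which is a conjugate reformulation of $L$ and hence also has top eigenvalue $\lambda_{h^*} = 1$, \eqref{eq:BPfix} yields the pointwise bound $q \le Mq$. Pairing with the strictly positive eigenfunction of the adjoint $M^*$ at eigenvalue $1$ (obtained from $\chi$ via the same conjugacy), one gets
\begin{equation}
  \langle q,\chi\rangle \le \langle Mq,\chi\rangle = \langle q, M^*\chi\rangle = \langle q,\chi\rangle,
\end{equation}
so equality holds throughout. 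Since $Mq - q \ge 0$ and $\chi > 0$, this forces $Mq = q$ $\nu$-almost everywhere. Combining $Mq(a) = dR(a) = q(a)$ with \eqref{eq:BPfix} gives $dR(a) = 1 - (1-R(a))^d$, whose only solution on $[0,1]$ is $R(a) = 0$ by strict concavity; hence $q \equiv 0$ on a full $\nu$-measure set. A short continuity argument based on \eqref{eq:BPfix} and Gaussian integrability extends this to every $a \in \R$. The equality $\eta(h^*, a) = 0$ then follows by applying $\eta^+(h^*,\cdot) = 0$ to each of the $d+1$ subtrees rooted at a neighbour of $o$, and the unconditional statements follow by integration: $P[|\mathcal{C}_o^{h^*} \cap \T^+| = \infty] = E[\eta^+(h^*, \phi_o)] = 0$, and analogously for $\mathcal{C}_o^{h^*}$ via a union bound over the $d+1$ neighbours of $o$.

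The main obstacle I expect is the Perron-pairing step: since the type space $\R$ is unbounded, one must verify that $q$, $\chi$ and the iterates under $M$ all live in function spaces in which the bilinear pairing is finite and the adjoint identity $\langle Mq,\chi\rangle = \langle q, M^*\chi\rangle$ is justified. This is precisely where the hypercontractivity estimate flagged in the introduction (cf.\ Proposition~\ref{prop:L_h-hypercontractivity}) should enter, as a substitute for the bounded-offspring conditions used in the classical critical multi-type branching-process theory.
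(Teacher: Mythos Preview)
Your argument is correct and gives a genuinely different proof from the paper's. The paper argues via the non-negative martingale $M_n = \sum_{x \in Z_n} \chi(\phi_x)$: its almost-sure convergence forces both $\abs{Z_n}$ and $\max_{x \in Z_n} \phi_x$ to be bounded with high probability (Lemma~\ref{lem:bounded_PhiZn}), and on that bounded event there is a uniformly positive one-step extinction probability, so survival has probability zero (Lemma~\ref{lem:dead_bounded}). Your route bypasses the martingale entirely and works directly with the fixed-point equation for $q = \eta^+(h^*,\cdot)$ (this is \eqref{eq:percolation-prob} at $h = h^*$), combining the strict inequality $1 - (1-r)^d < dr$ for $r \in (0,1]$ with the pairing against $\chi$. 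This is the classical ``survival probability is the minimal fixed point, hence zero at criticality'' argument for multi-type branching processes, carried out on a continuous type space.

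Your worry about the pairing step is overcautious, and hypercontractivity is not needed here. Since $0 \le q \le 1$, one has $q \in L^2(\nu)$ trivially; since $L = L_{h^*}$ is self-adjoint on $L^2(\nu)$ with $L\chi = \chi$ (Proposition~\ref{prop:L_h-chi-connection}), the identity $\inp{Lq}{\chi}_\nu = \inp{q}{L\chi}_\nu = \inp{q}{\chi}_\nu$ is immediate. No separate adjoint eigenfunction or conjugacy is needed: your operator $M$ coincides with $L$ on $[h^*,\infty)$, and since both $q$ and $\chi$ vanish on $(-\infty, h^*)$ the pairing only sees $L$. From $\inp{Lq - q}{\chi}_\nu = 0$ with $Lq - q \ge 0$ and $\chi > 0$ on $[h^*,\infty)$ one gets $Lq = q$ $\nu$-a.e., hence $R = 0$ $\nu$-a.e., and then the smoothing in $R(a) = E_Y[q(a/d + Y)1_{\{a/d+Y\ge h^*\}}]$ upgrades this to $q \equiv 0$ for every $a$. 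The paper's martingale route stays closer to the probabilistic picture, whereas your approach is shorter and purely analytic once the fixed-point equation is in hand.
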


As corollary of this theorem, we directly obtain the continuity of the
percolation functions. This extends
Theorem~5.1~of \cite{AbaCer19}, where it is shown that the functions
$h\mapsto \eta (h,a)$ and $h\mapsto \eta^*(h,a)$ are left-continuous
everywhere and continuous on $\mathbb R\setminus \{h^*\}$.

\begin{corollary}
  \label{cor:continuity}
  The functions $h\mapsto \eta (h,a)$ and $h\mapsto \eta^*(h,a)$ are
  continuous for every $a\in \mathbb R$.
\end{corollary}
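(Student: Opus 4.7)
The plan is to obtain Corollary~\ref{cor:continuity} as a direct bookkeeping consequence of Theorem~\ref{thm:crit_BP_dies} together with the results recalled above from \cite[Theorem~5.1]{AbaCer19}. Since both $h\mapsto\eta(h,a)$ and $h\mapsto\eta^+(h,a)$ are already known to be continuous on $\mathbb R\setminus\{h^*\}$, only continuity at the single point $h=h^*$ remains to be added, and I would verify this by examining the two one-sided limits separately.

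For the right limit, I use the fact, already recalled in the excerpt, that $\eta(h,a)=\eta^+(h,a)=0$ for every $h>h^*$; combined with the new identity $\eta(h^*,a)=\eta^+(h^*,a)=0$ furnished by Theorem~\ref{thm:crit_BP_dies}, this gives $\lim_{h\downarrow h^*}\eta(h,a)=0=\eta(h^*,a)$, and analogously for $\eta^+$. For the left limit, the left-continuity-everywhere part of \cite[Theorem~5.1]{AbaCer19} yields $\lim_{h\uparrow h^*}\eta(h,a)=\eta(h^*,a)$, which equals $0$ again by Theorem~\ref{thm:crit_BP_dies}; the same computation applies to $\eta^+$.

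Combining these two one-sided statements with the known continuity on $\mathbb R\setminus\{h^*\}$ upgrades the continuity to all of $\mathbb R$. I do not foresee any technical obstacle in the proof of the corollary itself: all the substance of the continuity result has been absorbed into Theorem~\ref{thm:crit_BP_dies}, and once that no-percolation-at-criticality fact is available the corollary reduces to a short verification of one-sided limits.
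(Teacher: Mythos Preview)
Your proposal is correct and matches the paper's intended argument: the paper does not spell out a proof of Corollary~\ref{cor:continuity} but simply remarks that it follows directly from Theorem~\ref{thm:crit_BP_dies} together with the left-continuity everywhere and continuity on $\mathbb R\setminus\{h^*\}$ established in \cite[Theorem~5.1]{AbaCer19}, which is precisely the verification you carry out.
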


The second result considers the cardinality of $\mathcal{C}_o^{h^*}$ in
the critical case, and describes its exact asymptotic tail behaviour. In
particular it gives a probabilistic meaning to the eigenfunction $\chi$.

\begin{theorem}
  \label{thm:cluster_size_tail}
  For every $a \geq h^*$, as $t\to\infty$,
  \begin{align}
    \label{eq:thm_cst_1}
    P_a[\abs{\mathcal{C}_o^{h^*} \cap \T^+} > t]
    &= C_1 \chi(a) t^{-1/2} \big(1+o(1)\big),\\
    \label{eq:thm_cst_2}
    P_a[\abs{\mathcal{C}_o^{h^*}} > t]
    &=  C_1 d^{-1}(d+1)\chi(a) t^{-1/2} \big(1+o(1)\big),
  \end{align}
  where, denoting by $\inp\cdot\cdot_\nu $ the scalar product on
  $L^2(\nu )$, the constant $C_1$ is given by
  \begin{equation}
    \label{eq:Cx}
    C_1 = \frac{1}{\Gamma(1/2)}\sqrt{\frac{2d}{d-1}\cdot
      \frac{\inp{1}{\chi}_{\nu}}{\inp{\chi}{\chi^2}_{\nu}}}.
  \end{equation}
\end{theorem}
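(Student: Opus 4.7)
The plan is to convert the tail asymptotic into the behaviour of the probability generating function $Q(a,s) \defeq E_a\bigl[s^{|\mathcal{C}_o^{h^*} \cap \T^+|}\bigr]$ as $s\uparrow 1$, by way of a Karamata Tauberian theorem. By the Markov property of the GFF on $\T$, conditionally on $\phi_o = a$ the $d$ children of $o$ in $\T^+$ have iid field values, and the subtrees beyond them are independent copies of the structure rooted in $\T^+$. First I would derive from this the functional equation
\begin{equation*}
Q(a,s) = s\,\bigl(1 - \tfrac{1}{d}(LU(\cdot,s))(a)\bigr)^d, \qquad U \defeq 1-Q,
\end{equation*}
valid for $a\geq h^*$, with $L = L_{h^*}$ normalised so that $d^{-1}L$ encodes the expectation of a test function over a single child conditioned on lying in $E_\phi^{h^*}$. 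Since Theorem~\ref{thm:crit_BP_dies} gives $U(\cdot,1)\equiv 0$, a Taylor expansion in $U$ and $1-s$ yields, to second order,
\begin{equation*}
(I-L)U = (1-s)\,\mathbf{1} - \tfrac{d-1}{2d}\,(LU)^2 + O\bigl(U(1-s)\bigr) + O(U^3).
\end{equation*}

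The next step is a spectral projection. Pairing the equation with $\chi$ in $L^2(\nu)$ and using the self-adjointness of $L$ on $L^2(\nu)$ together with $L\chi=\chi$ eliminates the left-hand side. Writing $U(\cdot,s) = \chi\,v(s) + w(\cdot,s)$ with $\inp{w(\cdot,s)}{\chi}_\nu = 0$, the projection produces a scalar identity for $v(s)$,
\begin{equation*}
0 = (1-s)\,\inp{1}{\chi}_\nu - \tfrac{d-1}{2d}\,v(s)^2\,\inp{\chi}{\chi^2}_\nu + (\text{lower order}),
\end{equation*}
whence $v(s) \sim C_0\sqrt{1-s}$ with $C_0 = \bigl(2d\,\inp{1}{\chi}_\nu/((d-1)\inp{\chi}{\chi^2}_\nu)\bigr)^{1/2}$. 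Combined with the pointwise ansatz this gives $1-Q(a,s) \sim \chi(a)\,C_0\sqrt{1-s}$ as $s\uparrow 1$, and Karamata's Tauberian theorem for monotone sequences translates this into $P_a[|\mathcal{C}_o^{h^*}\cap\T^+| > t] \sim (C_0/\Gamma(1/2))\,\chi(a)\,t^{-1/2}$, which is exactly \eqref{eq:thm_cst_1}.

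The main obstacle is the rigorous justification of the ansatz: one must show that the orthogonal remainder $w(\cdot,s)$ is of strictly lower order than $v(s)\asymp\sqrt{1-s}$, in a norm strong enough that the nonlinear term $(LU)^2$ makes sense and the expansion can be iterated. Here I would lean on the hypercontractivity of $L$ (Proposition~\ref{prop:L_h-hypercontractivity}), which upgrades $L^2(\nu)$ bounds to $L^p(\nu)$ bounds for some $p>2$ and provides a spectral gap between the top eigenvalue $1$ and the rest of the spectrum. Together these should support a Banach fixed-point argument in a small ball of an appropriately weighted function space, yielding $\|w(\cdot,s)\|_{L^p(\nu)} = O(1-s)$ and, subsequently, the pointwise asymptotic $U(a,s) \sim \chi(a)\,v(s)$ needed above.

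Finally, for \eqref{eq:thm_cst_2} I would decompose $\mathcal{C}_o^{h^*}$ into $\{o\}$ together with the $d+1$ forward clusters attached to the $d+1$ neighbours of $o$; these are independent conditionally on $\phi$ at $o$ and at those neighbours, and by tree symmetry each has the same law as $|\mathcal{C}_o^{h^*} \cap \T^+|$ started from the corresponding neighbour's field value. Combining \eqref{eq:thm_cst_1} with the identity $E_a[\mathbf{1}\{\phi_{x_1}\geq h^*\}\chi(\phi_{x_1})] = (L\chi)(a)/d = \chi(a)/d$, the tail of each branch is $\sim C_1\,\chi(a)\,t^{-1/2}/d$; summing over the $d+1$ regularly-varying (index $-1/2$) independent contributions produces the factor $(d+1)/d$ appearing in the statement.
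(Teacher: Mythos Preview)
Your outline matches the paper's proof closely: the same recursion (the paper uses the Laplace transform $\mathcal L_a(s)=E_a[e^{-sT}]$ rather than the p.g.f., a cosmetic difference), the same decomposition $\gamma_s=a_1(s)\chi+\beta_s$ with $\beta_s\perp\chi$, the same projection onto $\chi$ to extract the leading coefficient, and the same Tauberian conclusion.

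Two execution details differ from what you sketch. The remainder $\beta_s$ is not controlled by a fixed-point argument; the paper bootstraps directly. Pairing the equation with $\chi$ and using that $\chi\ge c>0$ on $[h^*,\infty)$ together with $f(x)\ge cx^2$ gives $\|a_1(s)\chi\|^2\le cs$ first (Lemma~\ref{lem:as-bound}). Then the spectral-gap inequality $\|\beta_s\|\le(1-|\lambda_2|)^{-1}\|\gamma_s-L\gamma_s\|$, the hypercontractive bound $\|L[\gamma_s]^2\|_{L^2(\nu)}\le d\|\gamma_s\|_{L^2(\nu)}^2$, and the full equation combine to give $\|\beta_s\|\le cs+c\|\alpha_s\|^2+c\|\beta_s\|^2$, hence $\|\beta_s\|\le cs$ since $\|\beta_s\|\to0$ (Lemma~\ref{lem:bz-bound}). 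No contraction mapping is needed. Second, this analysis only yields $\gamma_s/\sqrt s\to C_1\Gamma(1/2)\chi$ in $L^2(\nu)$, whereas the Tauberian theorem is applied pointwise in $a$; the paper upgrades the convergence using the monotonicity of $a\mapsto\gamma_s(a)$ (from stochastic domination~\eqref{eq:stoch_dom}) and the continuity of $\chi$ on $[h^*,\infty)$ --- a step your sketch omits.

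For \eqref{eq:thm_cst_2} the paper takes a shorter route than yours: it derives the exact identity $\tilde{\mathcal L}_a(s)=e^{s/d}\mathcal L_a(s)^{(d+1)/d}$ between the Laplace transforms of $|\mathcal C_o^{h^*}|$ and $|\mathcal C_o^{h^*}\cap\T^+|$, expands as $s\downarrow 0$, and applies the Tauberian theorem once more. Your tail-summation via $L\chi=\chi$ and regular variation also works, but needs a dominated-convergence step to pass the pointwise asymptotic through $E_a[\,\cdot\,]$.
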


\begin{remark}
  Theorem~\ref{thm:crit_BP_dies} could be seen as a corollary to
  Theorem~\ref{thm:cluster_size_tail}, but we prefer to state it
  separately, since the former theorem is used in the
  proof of the latter one.
\end{remark}

\begin{remark}
  Combining Theorem~\ref{thm:cluster_size_tail} with the stochastic
  domination (see \eqref{eq:stoch_dom} below), it follows that
  \begin{equation}
    a\mapsto \chi_{h^*}(a) \text{ is non-decreasing},
  \end{equation}
  which, to our knowledge, was not known previously.
\end{remark}

Our third result considers the percolation probabilities in the
near-critical supercritical regime. We are able to describe their
asymptotic behaviour as $h \uparrow h^*$. As in Theorem
\ref{thm:cluster_size_tail}, the limiting objects can be expressed in
terms of the eigenfunction~$\chi$.
\begin{theorem}
  \label{thm:percolation_prob}
  The percolation probabilities $\eta $ and $\eta^+$ can be written as
  \begin{align}
    \label{eq:thm_percolation_prob_1}
    \eta^+(h,a) &= C_2 (h^* - h) \big(\chi(a) + r^+_h(a)\big),\\
    \label{eq:thm_percolation_prob_2}
    \eta(h, a) &= C_2\, \frac{d+1}d  (h^* - h) \big(\chi(a) + r_h(a)\big),
  \end{align}
  where for an arbitrary $\varepsilon \in (0,1]$ the reminder functions $r_h$ and $r^+_h$ satisfy
  \begin{equation}
    \lim_{h\uparrow h^*} \norm {r_h}_{L^{2-\varepsilon }(\nu )}
    = \lim_{h\uparrow h^*} \norm {r^+_h}_{L^{2-\varepsilon }(\nu )} = 0,
  \end{equation}
  and where, with $\Id$ denoting the identity on $\mathbb R$, the constant $C_2$
  is given by
  \begin{equation}
    C_2 = 2\cdot \frac{d-1}{d+1} \frac{\inp{\Id}{\chi^2}_\nu}{\inp{\chi}{\chi^2}_\nu}.
  \end{equation}
\end{theorem}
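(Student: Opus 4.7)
The strategy is to derive and analyse a scalar fixed-point equation for $u_h \defeq \eta^+(h,\cdot)$ and to solve it perturbatively around $\chi$. Conditioning on $\phi_o = a$ and using the independence of the $d$ forward subtrees of $o$ yields
\begin{equation*}
  1 - u_h(a) = \bigl(1 - T_h u_h(a)\bigr)^d, \qquad a \ge h,
\end{equation*}
where $T_h f(a) \defeq E_a[f(\phi_x)\mathbf 1\{\phi_x \ge h\}]$ for $x$ a forward child of $o$, and $u_h(a) = 0$ for $a < h$. Since the linearisation $d T_h$ is the operator $L_h$ of Section~\ref{sec:notation_and_results}, expanding the $d$-th power rewrites the equation as $u_h = L_h u_h + Q_h(u_h)$ with $Q_h(f) = -\binom{d}{2}(T_h f)^2 + O((T_h f)^3)$.

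Next I would decompose $u_h = c_h \chi + w_h$ along $\chi$ and $\chi^\perp$ in $L^2(\nu)$ (assuming $\chi$ is normalised, using that $C_2$ is scale-invariant in $\chi$), with $c_h \defeq \inp{u_h}{\chi}_\nu$, and project the equation on both components. Because $L_h$ is self-adjoint and compact on $L^2(\nu)$ with a simple isolated top eigenvalue $\lambda_h$ depending continuously on $h$, the restriction of $I - L_h$ to $\chi^\perp$ is boundedly invertible uniformly for $h$ near $h^*$, so $\norm{w_h}_{L^2(\nu)} \lesssim \norm{Q_h(u_h)}_{L^2(\nu)} + o(c_h)$. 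Theorem~\ref{thm:crit_BP_dies} together with the left-continuity of $\eta^+$ from \cite{AbaCer19} forces $u_h \to 0$ pointwise, and the hypercontractivity of $L_h$ (Proposition~\ref{prop:L_h-hypercontractivity}) places $\chi$ in every $L^p(\nu)$; dominated convergence then gives $u_h \to 0$ in every $L^p(\nu)$, so $Q_h(u_h) = -\tfrac{d-1}{2d}c_h^2\chi^2(1+o(1))$ in any $L^{2-\varepsilon}(\nu)$. A second application of hypercontractivity upgrades this to $\norm{w_h}_{L^{2-\varepsilon}(\nu)} = O(c_h^2) = o(c_h)$, which is exactly the content of the bound required on $r_h^+$.

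The scale of $c_h$ is fixed by the scalar projection: using self-adjointness and $L_h \chi - \chi = O(h^*-h)$ in the appropriate sense, the $\chi$-component reduces to
\begin{equation*}
  c_h\,(1-\lambda_h) = -\tfrac{d-1}{2d}\,c_h^2\,\inp{\chi}{\chi^2}_\nu + o(c_h^2).
\end{equation*}
Combined with the first-order perturbation identity $\lambda'(h^*) = \inp{\chi}{L'\chi}_\nu$, obtained by differentiating $L_h \chi_h = \lambda_h \chi_h$ and reading $L'$ off the explicit Gaussian transition density of the GFF along a single edge, this is expected to yield
\begin{equation*}
  \frac{d\lambda_h}{dh}\bigg|_{h=h^*} = -\frac{(d-1)^2}{d(d+1)}\,\inp{\Id}{\chi^2}_\nu,
\end{equation*}
whereupon substitution produces $c_h = C_2(h^*-h)(1+o(1))$ with the claimed constant $C_2$. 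The equation for $\eta$ follows analogously: the root has $d+1$ (not $d$) independent neighbours, giving $\eta(h,a) = 1 - (1 - T_h u_h(a))^{d+1} = \tfrac{d+1}{d}L_h u_h(a)(1+o(1))$ to leading order, which combined with $L_h \chi \to \chi$ produces the prefactor $(d+1)/d$ and the corresponding remainder $r_h$.

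The main technical obstacle is the derivative $d\lambda_h/dh$ at $h^*$: although abstract perturbation theory delivers $\inp{\chi}{L'\chi}_\nu$ directly, identifying the result as the multiple of $\inp{\Id}{\chi^2}_\nu$ above requires isolating the boundary contribution at the level $h^*$ in the integral defining $L_h$ and matching it to the invariant measure $\nu$ via the self-adjointness structure. The $L^{2-\varepsilon}$ rather than $L^2$ norm on $r_h^+$ is likewise delicate, since $\chi^2$ need not a priori belong to $L^2(\nu)$, forcing every quantitative bound to be routed through the hypercontractivity of $L_h$.
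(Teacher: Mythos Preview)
Your Lyapunov--Schmidt outline is morally the same as the paper's use of the Crandall--Rabinowitz theorem, but the proposal has a real gap precisely at the step you flag as the ``main technical obstacle,'' and an earlier step is also not correct as written.

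\textbf{The scalar projection does not produce $(1-\lambda_h)c_h$.} You decompose $u_h=c_h\chi+w_h$ with $\chi=\chi_{h^*}$ fixed and claim the $\chi$-projection of $u_h=L_hu_h+Q_h(u_h)$ reduces to $c_h(1-\lambda_h)=\dots$. It does not: for $h<h^*$ one has $L_h\chi=\chi$ identically on $[h^*,\infty)$ (since $\chi$ vanishes on $[h,h^*)$, the lower limit in $L_h$ is irrelevant there), so $\inp{\chi}{L_h\chi}_\nu=1$ for all $h\le h^*$ and hence $\inp{u_h}{(I-L_h)\chi}_\nu$ is a boundary integral over $[h,h^*)$, not $(1-\lambda_h)c_h$. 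To get the clean scalar equation you must project onto the moving eigenfunction $\chi_h$, which then forces you to produce $\lambda'(h^*)$ separately.

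\textbf{Computing $\lambda'(h^*)$ requires the shift.} The Hellmann--Feynman formula $\lambda'(h^*)=\inp{\chi}{(\partial_hL_h)\chi}_\nu$ is not directly available because $h\mapsto L_h$ is not Fr\'echet-differentiable on $L^2(\nu)$: the indicator $1_{[h,\infty)}$ gives delta contributions at the moving boundary. The paper's essential trick is to conjugate by the shift $\theta_h$, setting $dH_h=\theta_hL_h\theta_h^{-1}$; this fixes the cutoff at $0$ and moves all the $h$-dependence into the smooth kernel $\rho_Y\bigl(x-\tfrac{a}{d}+\tfrac{d-1}{d}h\bigr)$. One can then differentiate in $h$ as a bounded operator, and the identity $\rho_Y'(y)=-\tfrac{d}{d+1}\,y\,\rho_Y(y)$ is exactly what converts the derivative into a multiple of $\inp{\Id}{\chi^2}_\nu$ after one application of self-adjointness (the paper's Lemma~\ref{lem:bifurcation_last_req}). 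This shift is not cosmetic: without it you are left trying to match boundary contributions of the form $\chi(h^*)^2\rho_\nu(h^*)$ to $\inp{\Id}{\chi^2}_\nu$, which is not an identity one can read off directly. Once the shift is in place, your direct reduction and the paper's abstract bifurcation argument are essentially interchangeable; the remaining $L^{2-\varepsilon}$ issue is handled by a H\"older step when shifting back from $\nu^*$ to $\nu$, not by hypercontractivity.
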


\begin{remark}
  As a consequence of Theorem~\ref{thm:percolation_prob}, the critical exponent
  $\beta $ defined by
  $\beta = \lim_{h\uparrow h^*} \frac{\log P(\abs{\mathcal C_0^h} =
      \infty)}{\log(h^* - h)}$ satisfies $\beta =1$. This coincides with
  the conjectured value of this exponent for the Gaussian free field on
  $\mathbb Z^d$, as well as with its rigorously proved value
  on a large family of metric graphs, \cite[Corollary 1.2]{DrePreRod21}.
\end{remark}

We now briefly discuss the structure of this article. In
Section~\ref{sec:notation_and_results} we introduce more notation and collect useful
known facts about the Gaussian free field on $\T$.
The proof of Theorem~\ref{thm:crit_BP_dies}, which used the techniques known from the
theory of multi-type branching processes, and exploits the convergence of a certain
non-negative martingale (see \eqref{def:M_n}), is given in
Section~\ref{sec:proof_theorem1}.
In Section~\ref{sec:proof_theorem2} we give the proof of
Theorem~\ref{thm:cluster_size_tail}, using the Tauberian theory and investigating
the Laplace transform of the cardinality of $\mathcal{C}_o^{h^*}$. Finally, in
Section~\ref{sec:proof_theorem3}, we use results on bifurcations on Banach
spaces together with the defining equation for the forward percolation probability
introduced in \cite{AbaCer19} (see \eqref{eq:percolation-prob}) to
prove Theorem~\ref{thm:percolation_prob}.

\section{Notation and useful results}
\label{sec:notation_and_results}

In this section we introduce the notation used throughout the paper and recall
several known facts concerning the level set percolation of the
Gaussian free field on trees.

As already stated in the introduction, we use $\T$ to denote the $(d+1)$-regular
tree, $d\ge 2$,  that is an infinite tree whose every vertex has exactly
$d+1$ neighbours. For two vertices $x,y \in \T$ we use $d(x, y)$ to
denote their usual graph distance. The tree is rooted at a fixed arbitrary vertex $o \in \T$,
and $\bar{o} \in \T$ denotes a fixed neighbour of $o$. $\T^+$ denotes the
forward tree as defined in \eqref{def:Tplus}.

We consider the Gaussian free field $\phi = (\phi_x)_{x\in \mathbb T}$ which is the
centred Gaussian process  on $\mathbb T$ whose covariance function is the Green function
of the simple random walk on $\T$, that is
\begin{equation}
  \label{def:green-func}
  E[\phi_x \phi_y] = g(x, y)
  \defeq \frac{1}{d+1} \mathbb{E}_x\Big[\sum_{k=0}^\infty 1_{X_k =
      y}\Big],
  \qquad x,y \in \T,
\end{equation}
where $\mathbb E_x$ stands for the expectation with respect to the simple
random walk $(X_k)_{k \geq 0}$ on $\mathbb T$ starting at $x\in \T$.

We frequently use the fact that  that the Gaussian free
field on $\T$ can be viewed as multi-type branching process with a continuous
type space (see \cite[Section~3]{Szn15} and \cite[Section 2.1]{AbaCer19}).
To this end, we define
\begin{equation}
  \label{eq:sigmas}
  \sigma_\nu^2 \defeq \frac{d}{d-1}
  \quad \text{and} \quad
  \sigma_Y^2 \defeq \frac{d+1}{d},
\end{equation}
and let $(Y_x)_{x\in\T}$ be a collection of independent centred
Gaussian random variables on some probability space
$(\Omega, \mathcal A, P')$ such that $Y_o \sim \N(0, \sigma_\nu^2)$ and
$Y_x \sim \N(0, \sigma_Y^2)$ for $x \neq o$. We then recursively define
another field $\widetilde\phi $ on $\mathbb T$ by
\begin{equation}
  \label{eq:BP-representation}
  \begin{split}
    \text{(a)}\ &\widetilde{\phi}_o \defeq Y_o, \\
    \text{(b)}\ &\parbox[t]{12.5cm}{
      for $x\neq o$,  $\widetilde{\phi}_x \defeq \frac{1}{d} \widetilde{\phi}_{\bar{x}} +
      Y_x$ where $\bar x$ is the direct ancestor of $x$ in $\mathbb T$,
      that is the first vertex on the geodesic path from $x$ to $o$
      different from $x$.}
  \end{split}
\end{equation}
As explained e.g.~in \cite[(2.9)]{AbaCer19}, the law of
$(\widetilde{\phi}_x)_{x\in\T}$ under $P'$ agrees with the law $P$ of the
Gaussian free field $\phi$. Therefore, we will always assume that the
considered Gaussian free field is constructed in this way and will not
distinguish between $\phi$ and $\widetilde \phi$.

Representation \eqref{eq:BP-representation} of $\phi$ can be used to give
a concrete construction for the conditional probability $P_a$ introduced
in \eqref{def:conditional-prob}: It is sufficient to replace (a) in
\eqref{eq:BP-representation} by $\widetilde \phi_0= a$. In addition,
\eqref{eq:BP-representation} easily allows to construct a monotone
coupling of $P_a$ and $P_b$. As the result we obtain:
\begin{equation}
  \label{eq:stoch_dom}
  \text{If $a<b$, then $P_b$ stochastically dominates $P_a$,}
\end{equation}
that is $E_a[f(\phi)] \le E_b[f(\phi)]$ for every bounded increasing
function $f:\mathbb R^{\mathbb T}\to \mathbb R$.

From the construction \eqref{eq:BP-representation} it follows that the
root $o$ can be viewed as an initial
particle of a multi-type branching process; its type is distributed as
$Y_0$. Every particle $\bar x$ in this branching process  has then $d$ offsprings ($d+1$
  if $\bar x = 0$) whose types are independently given by
$\frac{1}{d}{\phi}_{\bar x} + Y$, with $Y\sim N(0,\sigma_Y^2)$.

The branching process point of view can be adapted to $\mathcal{C}_o^h$, by
considering the same multi-type branching process but killing all particles with
type lower than $h$ (and thus also not allowing them to have descendants
  themselves).  Similarly, $\mathcal{C}_o^h \cap \T^+$ can be constructed the same
way, with the only difference that in this case also the root node $o$ has $d$
potential descendants, instead of $d+1$.
We denote by $Z_n^h$ the $n$-th generation of this
branching process
\begin{equation}
  \label{eq:Z_n^h}
  Z_n^h \defeq \set{x \in  \mathcal C_o^h \cap \T^+ : d(o,x) = n },
  \qquad h\in \mathbb R, n\in \mathbb N.
\end{equation}

We now recall more in detail the spectral machinery introduced in
\cite{Szn15} in order to characterise the critical value $h^*$.  Let $\nu$
be a centred Gaussian measure on $\mathbb R$ with variance $\sigma_\nu^2$
(as defined in \eqref{eq:sigmas}), and let $Y$ be a centred Gaussian
random variable with variance $\sigma_Y^2$. The expectation with respect
to this random variable is denoted $E_Y$. We consider the Hilbert space
$L^2(\nu) \defeq L^2(\R, \mathcal B(\R), \nu)$, and for every $h \in \R$
we define the  operator $L_h$ on $L^2(\nu)$ by
\begin{equation}
  \label{eq:L_h}
  \begin{split}
    L_h[f](a)
    &\defeq 1_{[h,\infty)}(a)\, d\, E_Y\Big[1_{[h,\infty)}
      \Big(Y + \frac{a}{d}\Big)f\Big(Y+\frac{a}{d}\Big)\Big]\\
    &= 1_{[h,\infty)}(a)\, d \int_{[h, \infty)} f(x)
    \rho_Y\Big(x - \frac{a}{d}\Big) \D x,
  \end{split}
\end{equation}
where $\rho_Y$ denotes the density of $Y$.
We let $\lambda_h$ to stand for the operator norm of $L_h$ in $L^2(\nu )$,
\begin{equation}
  \label{def:lambda_h}
  \lambda_h \defeq \norm{L_h}_{L^2(\nu)\to L^2(\nu)}.
\end{equation}

The following proposition summarises some known properties of the operator $L_h$ as well
as the connection between $L_h$ and the critical height $h^*$.
\begin{proposition}[\cite{Szn15} Propositions 3.1, 3.3,  Corollary 4.5]
  \label{prop:L_h-chi-connection}
  For all $h\in\R$,
  $L_h$ is a self-adjoint, non-negative, Hilbert-Schmidt operator on
  $L^2(\nu)$,
  $\lambda_h$ is its simple eigenvalue and there
  exists a unique $\chi_h \geq 0$ with unit $L^2(\nu)$-norm, continuous,
  strictly positive on $[h, \infty)$, vanishing on $(-\infty,h)$, such that
  \begin{equation}
    L_h[\chi_h] = \lambda_h \chi_h.
  \end{equation}
  Additionally, the map $h \mapsto \lambda_h$ is a decreasing homeomorphism
  from $\R$ to $(0, d)$ and $h^*$ is the unique value in $\R$ such that
  $\lambda_{h^*} = 1$. Finally, for every $d\ge 2$,
  \begin{equation}
    0 < h^* < \infty.
  \end{equation}
\end{proposition}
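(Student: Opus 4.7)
The plan is to split the proposition into three blocks: analytic properties of $L_h$, a Perron-Frobenius type argument for the top eigenpair, and the dependence of $\lambda_h$ on $h$ together with the location of $h^*$.

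First, I would rewrite $L_h$ as the integral operator on $L^2(\nu)$ with kernel $K_h(a,x) \defeq d\cdot 1_{[h,\infty)}(a)\, 1_{[h,\infty)}(x)\, \rho_Y(x-a/d)$. Self-adjointness with respect to $\nu$ amounts to the symmetry $K_h(a,x)\rho_\nu(a) = K_h(x,a)\rho_\nu(x)$, equivalently $\rho_Y(x - a/d)\rho_\nu(a) = \rho_Y(a - x/d)\rho_\nu(x)$. This is a Gaussian calculation resting on the variance compatibility $\sigma_\nu^2 = \sigma_Y^2 + \sigma_\nu^2/d^2$ from \eqref{eq:sigmas}, which is precisely the statement that the pair $(\widetilde{\phi}_o, \widetilde{\phi}_{\bar{o}})$ built from \eqref{eq:BP-representation} has identical $\nu$-marginals. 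The Hilbert-Schmidt norm $\iint K_h(a,x)^2 \rho_\nu(a) \rho_\nu(x)\D a\D x$ is finite by a Gaussian tail computation (and in fact decreases in $h$), and non-negativity of $L_h$ is immediate from $K_h \ge 0$.

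For the spectral step, $L_h$ being compact and self-adjoint, its operator norm $\lambda_h$ is attained as an eigenvalue. Its kernel is strictly positive on $[h,\infty)^2$, so a Jentzsch / Krein-Rutman argument applied to the restriction of $L_h$ to $L^2([h,\infty),\nu)$ yields that $\lambda_h$ is a simple eigenvalue and that any corresponding eigenfunction has a constant sign on $[h,\infty)$, hence can be normalised to be non-negative. I would then extend $\chi_h$ by $0$ on $(-\infty,h)$ (consistent with $L_h$ vanishing there) and bootstrap its continuity on $\R$ and strict positivity on $[h,\infty)$ from the identity $\chi_h = \lambda_h^{-1}L_h[\chi_h]$ together with the continuity and positivity of $K_h$ in its first argument.

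For the dependence on $h$, pointwise monotonicity $K_{h_1} \ge K_{h_2}$ for $h_1 < h_2$ gives $\lambda_h$ non-increasing, and strict monotonicity follows by testing against $\chi_{h_2}$: $\lambda_{h_2} = \inp{L_{h_2}\chi_{h_2}}{\chi_{h_2}}_\nu < \inp{L_{h_1}\chi_{h_2}}{\chi_{h_2}}_\nu \le \lambda_{h_1}$, where strictness uses positivity of $\chi_{h_2}$ on $[h_2,\infty)$. Continuity in $h$ follows from $\norm{L_h - L_{h'}}_{\mathrm{HS}} \to 0$ as $h'\to h$. For the endpoints: as $h \to -\infty$, $L_h$ converges in operator norm to $f\mapsto d\,E_Y[f(Y + \cdot/d)]$, which, by the same variance compatibility, preserves the constant function $1$ with eigenvalue $d$, so $\lambda_h \to d$; as $h \to +\infty$, $\norm{L_h}_{\mathrm{HS}} \to 0$ by dominated convergence, so $\lambda_h \to 0$. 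Monotonicity, continuity and these limits together produce the decreasing homeomorphism $\R \to (0,d)$ and the unique $h^*$ with $\lambda_{h^*}=1$. The main obstacle, and the only step that is not pure operator theory, is showing $0 < h^* < \infty$: I would argue finiteness by observing that for sufficiently negative $h$ the cluster $\mathcal{C}_o^h$ stochastically dominates a supercritical Galton-Watson tree (via the branching representation \eqref{eq:BP-representation}), and positivity by invoking the random-interlacements coupling of \cite{Szn15} to produce some $h > 0$ at which percolation fails, forcing $\lambda_h < 1$ there.
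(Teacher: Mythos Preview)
The paper does not prove this proposition; it is quoted from \cite{Szn15} (Propositions~3.1, 3.3 and Corollary~4.5 there), so there is no in-paper argument to compare against. Your outline for the operator-theoretic part---self-adjointness via the Gaussian identity, the Hilbert--Schmidt bound, Jentzsch/Krein--Rutman for the simple top eigenvalue, monotonicity and continuity of $h\mapsto\lambda_h$, and the limits at $\pm\infty$---matches the approach of \cite{Szn15} and is sound. One small slip: non-negativity of $L_h$ as a self-adjoint operator, i.e.\ $\inp{L_h f}{f}_\nu \ge 0$, is not ``immediate from $K_h\ge 0$''; a non-negative kernel yields positivity preservation, not positive semi-definiteness. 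The correct reason is that $L_h$ is the compression $1_{[h,\infty)}\cdot (d\,P_{\log d})\cdot 1_{[h,\infty)}$ of a positive multiple of the Ornstein--Uhlenbeck semigroup on $L^2(\nu)$.

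The genuine gap is in your last paragraph, where the logic for the two bounds on $h^*$ is inverted. First, once you have established that $h\mapsto\lambda_h$ is a homeomorphism $\R\to(0,d)$, the existence of a unique $h^*\in\R$ with $\lambda_{h^*}=1$ is automatic, so no separate probabilistic argument for $h^*<\infty$ is needed. Second, and more importantly, your argument labelled ``positivity'' points the wrong way: exhibiting some $h_0>0$ at which percolation \emph{fails} gives $h^*\le h_0$, an upper bound, not $h^*>0$. To obtain $h^*>0$ one must show that percolation \emph{occurs} at some positive level, and this is precisely what the random-interlacements coupling in \cite{Szn15} provides. Likewise, domination by a supercritical Galton--Watson tree at very negative $h$ shows percolation there and hence only $h^*>-\infty$, not the ``finiteness'' you attribute to it.
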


Later we will need the following estimates on the norms of $L_h[f]$ which follow
from the hypercontractivity of the Ornstein-Uhlenbeck semigroup, see (3.14) in
\cite{Szn15} and (4.12) in \cite{AbaCer19}.
\begin{proposition}
  \label{prop:L_h-hypercontractivity}
  For every $f\in L^2(\nu)$, $h \in \R$, $1<p<\infty$ and
  $q \le (p-1)d^2 + 1$,
  \begin{equation}
    \label{eq:L_h-hypercontractivity_1}
    \norm[\big]{L_h[f]}_{L^q(\nu)}
    \leq \norm[\Big]{d E_Y\Big[
        f\Big(Y+\frac{\cdot}{d}\Big)\Big]}_{L^q(\nu)}
    \leq d\,\norm{f}_{L^p(\nu)}.
  \end{equation}
  In particular (taking $p=2$),
  \begin{equation}
    \norm[\big]{L_h[f]^k}_{L^2(\nu)}
    \leq d\,\norm{f}_{L^2(\nu)}^k \quad
    \text{ for all }  1\le k\le d^2 + 1.
  \end{equation}
\end{proposition}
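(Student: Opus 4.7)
The two inequalities split naturally. The first is a deterministic bound obtained by dropping the two indicator factors in \eqref{eq:L_h}, and the second is the Gaussian hypercontractivity content; all the work is in recognising a Mehler (Ornstein--Uhlenbeck) operator hidden inside the definition of $L_h$.

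Since the rightmost bound $d\|f\|_{L^p(\nu)}$ is unchanged by $f\mapsto|f|$ and $|L_h[f]|\le L_h[|f|]$ pointwise, it suffices to prove the chain for non-negative $f$. For such $f$, the definition \eqref{eq:L_h} gives
\[
L_h[f](a) = 1_{[h,\infty)}(a)\, d\, E_Y\bigl[1_{[h,\infty)}(Y+a/d)\, f(Y+a/d)\bigr] \le d\, E_Y[f(Y+a/d)]
\]
by simply dropping the two indicator factors; taking the $L^q(\nu)$-norm on both sides yields the first inequality.

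For the second inequality, I would introduce the auxiliary operator $\mathsf T[g](a)\defeq E_Y[g(Y+a/d)]$ on $L^2(\nu)$. The crucial algebraic check is that if $a\sim\nu$ is independent of $Y$, then the pair $(a,\,Y+a/d)$ is jointly centred Gaussian with both marginals equal to $\nu$ and correlation $\rho=1/d$: indeed, using \eqref{eq:sigmas},
\[
\mathrm{Var}(Y+a/d) = \sigma_Y^2 + \sigma_\nu^2/d^2 = \tfrac{d+1}{d}+\tfrac{1}{d(d-1)} = \tfrac{d}{d-1} = \sigma_\nu^2, \quad \mathrm{Cov}(a,\,Y+a/d) = \sigma_\nu^2/d.
\]
Hence $\mathsf T$ is precisely the Mehler operator of parameter $\rho=1/d$, and Nelson's hypercontractive inequality yields $\|\mathsf T[f]\|_{L^q(\nu)}\le\|f\|_{L^p(\nu)}$ whenever $\rho^2=1/d^2\le(p-1)/(q-1)$, i.e.\ exactly when $q\le(p-1)d^2+1$. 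Multiplying by $d$ gives the second inequality, and the consequence for $\|L_h[f]^k\|_{L^2(\nu)}$ follows by taking $p=2$, $q=2k$. There is no substantial obstacle; the only subtle point is the variance identity $\sigma_Y^2+\sigma_\nu^2/d^2=\sigma_\nu^2$, which is rigged by \eqref{eq:sigmas} precisely so that $\mathsf T$ is a genuine Mehler operator on $L^2(\nu)$, after which Nelson's classical theorem applies off the shelf.
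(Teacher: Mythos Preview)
Your argument is correct and matches the paper's indicated approach: the paper gives no proof of its own but simply cites \cite{Szn15,AbaCer19} for the hypercontractivity of the Ornstein--Uhlenbeck semigroup, which is exactly the Mehler/Nelson identification you carry out via the variance identity $\sigma_Y^2+\sigma_\nu^2/d^2=\sigma_\nu^2$. One small remark: your route to the ``in particular'' clause via $p=2$, $q=2k$ actually yields $\|L_h[f]^k\|_{L^2(\nu)}=\|L_h[f]\|_{L^{2k}(\nu)}^k\le d^k\|f\|_{L^2(\nu)}^k$ for $2k\le d^2+1$, i.e.\ constant $d^k$ and range $k\le(d^2+1)/2$ rather than the paper's stated $d$ and $k\le d^2+1$---this is a harmless imprecision in the paper's formulation, since every application there uses only $k\le d$ and a generic constant.
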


The eigenfunctions $\chi_h$ of $L_h$ were studied more in detail in
\cite{AbaCer19}. We will need the following proposition describing their
behaviour. (Note that \cite{AbaCer19} considers $d$-regular trees, and
  thus $d$ in our setting corresponds to $d-1$ in \cite{AbaCer19}.)
\begin{proposition}[\cite{AbaCer19} Proposition 3.1]
  \phantomsection
  \label{prop:chi_bounds}
  \begin{enumerate}
    \item There exists $c > 0$ such that
    \begin{equation}
      \chi_h(a) \leq c a^{1-\log_d(\lambda_h)} \text{ for all }
      h\in \mathbb R \text{ and } a \geq d.
    \end{equation}
    \item For every $h\in\R$ there exists $c_h > 0$ such that
    \begin{equation}
      \label{eq:chi_lower_bound}
      \chi_h(a) \geq c_h a^{1-\log_d(\lambda_h)} \text{ for all } a \geq h.
    \end{equation}
  \end{enumerate}
\end{proposition}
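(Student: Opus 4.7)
The plan is to iterate the eigenvalue relation $L_h[\chi_h]=\lambda_h\chi_h$ along a backwards random walk and read off the resulting polynomial growth. Let $(Y_k)_{k\ge 1}$ be iid $\N(0,\sigma_Y^2)$, set $A_0\defeq a$, $A_k\defeq A_{k-1}/d+Y_k$, and iterate \eqref{eq:L_h} to obtain, for every $a\in\R$ and $n\ge 1$,
\begin{equation*}
  \lambda_h^n\chi_h(a) = d^n\,1_{[h,\infty)}(a)\,E\bigl[1_{\{A_k\ge h,\,1\le k\le n\}}\chi_h(A_n)\bigr].
\end{equation*}
Since $A_n=a/d^n+\sum_{k=1}^n d^{k-n}Y_k$, choosing $n\defeq n(a)$ of order $\log_d a$ pins the deterministic part $a/d^n$ inside a fixed window sitting above $h$, while the Gaussian noise has variance strictly less than $\sigma_\nu^2$. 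The prefactor $(d/\lambda_h)^n = d^{n(1-\log_d\lambda_h)}$ is therefore comparable to $a^{1-\log_d\lambda_h}$, which already identifies the target exponent.

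For the upper bound (a), I would simply drop the indicators $1_{\{A_k\ge h\}}$ to obtain $\chi_h(a)\le(d/\lambda_h)^n E[\chi_h(A_n)]$. As $A_n$ is Gaussian with bounded mean and variance strictly less than $\sigma_\nu^2$, the density ratio $\rho_{A_n}/\rho_\nu$ is the exponential of a quadratic in $y$ with negative leading coefficient and is therefore uniformly bounded. Combined with $\norm{\chi_h}_{L^1(\nu)}\le\norm{\chi_h}_{L^2(\nu)}=1$ by Cauchy--Schwarz, this gives $E[\chi_h(A_n)]\le C$ uniformly in $a\ge d$, and hence $\chi_h(a)\le c\cdot a^{1-\log_d\lambda_h}$.

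For the lower bound (b) I would retain the indicator and exploit that $\chi_h$ is continuous and strictly positive on $[h,\infty)$ from Proposition~\ref{prop:L_h-chi-connection}: there are $A>h$ and $c_A>0$ with $\chi_h\ge c_A$ on $[h,A]$. Restricting the expectation to the event $\mathcal E_n\defeq\{A_k\ge h,\ 1\le k\le n\}\cap\{A_n\le A\}$ yields $\chi_h(a)\ge (d/\lambda_h)^n c_A\, P[\mathcal E_n]$. For $k\le n-K$ with $K$ a large fixed integer the drift $a/d^k$ dominates the Gaussian noise, and the sum $\sum_{j\ge K}\exp(-(d^j)^2/(2c\sigma_\nu^2))$ is finite and independent of $n$, so fixing $K$ large enough ensures the first $n-K$ constraints are met with probability at least $1/2$; over the last $K$ steps the walk lives in a bounded window and $\mathcal E_n$ is realised with positive probability by non-degeneracy of the Gaussian increments. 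Consequently $P[\mathcal E_n]$ is bounded below uniformly in $a$, proving $\chi_h(a)\ge c_h\cdot a^{1-\log_d\lambda_h}$.

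The main technical obstacle is the uniform-in-$a$ trajectory control in part (b): because $n\sim\log_d a$, the super-exponential decay of the Gaussian tails must be balanced against the logarithmically growing number of indicator constraints in order to obtain a lower bound on $P[\mathcal E_n]$ that does not degrade with $a$.
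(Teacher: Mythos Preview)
This proposition is quoted from \cite{AbaCer19} and the present paper gives no proof of it, so there is no in-paper argument to compare against. Your strategy of iterating the eigenvalue relation along the autoregressive chain $A_k=A_{k-1}/d+Y_k$ and choosing $n\sim\log_d a$ so that $(d/\lambda_h)^n\asymp a^{1-\log_d\lambda_h}$ is the natural route and is essentially the one taken in \cite{AbaCer19}.

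There is, however, a genuine gap in your sketch of part (a). You assert that because $A_n$ has variance strictly below $\sigma_\nu^2$, the density ratio $\rho_{A_n}/\rho_\nu$ is ``uniformly bounded'' and then combine this with $\norm{\chi_h}_{L^1(\nu)}\le 1$. For each fixed $n$ the ratio is indeed bounded in $y$, but the bound is \emph{not} uniform in $n$: writing $\mu=a/d^n\in[1,d]$ and $\tau_n^2=\mathrm{Var}(A_n)$, the maximum of $\log(\rho_{A_n}/\rho_\nu)$ equals $\mu^2/\bigl(2(\sigma_\nu^2-\tau_n^2)\bigr)$, and since $n=n(a)\to\infty$ forces $\tau_n^2\uparrow\sigma_\nu^2$, this supremum diverges. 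The repair is straightforward: replace the $L^\infty$ estimate by Cauchy--Schwarz,
\[
  E[\chi_h(A_n)]\le \norm{\chi_h}_{L^2(\nu)}\,\norm{\rho_{A_n}/\rho_\nu}_{L^2(\nu)}=\norm{\rho_{A_n}/\rho_\nu}_{L^2(\nu)},
\]
and a direct Gaussian calculation shows that $\norm{\rho_{A_n}/\rho_\nu}_{L^2(\nu)}^2$ carries the exponent $\mu^2/(2\sigma_\nu^2-\tau_n^2)$, which stays bounded as $\tau_n^2\to\sigma_\nu^2$. Your outline for part (b) is sound; the trajectory control you flag as the main obstacle is handled exactly as you describe, by splitting off the last $K$ steps where the chain lives in a bounded window.
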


Finally, we introduce the filtration
\begin{equation}
  \mathcal F_n \defeq \sigma\big(\phi_x : x \in \T^+,\ d(o,x) \leq n\big), \qquad
  n\ge 0,
\end{equation}
and recall from \cite[(3.35)]{Szn15}, that the $(\mathcal F_n)$-adapted
process $M^h = (M_n^h)_{n \geq 0}$ defined by
\begin{equation}
  \label{def:M_n}
  M_n^h \defeq \lambda_h^{-n} \sum_{x\in Z_n^h} \chi_h(\phi_x)
\end{equation}
is a non-negative martingale under $P$ as well as under every $P_a$,
$a\in \mathbb R$.

Throughout the paper we use the usual notation for the asymptotic
relation of two functions: For functions $f$ and $g$, we write
$f(s) \sim g(s)$ as $s \to s_0$ if $\lim_{s\to s_0} \frac{f(s)}{g(s)} = 1$,
and write $f(s) = o(g(s))$ as $s \to s_0$ if
$\lim_{s \to s_0} \frac{\abs{f(s)}}{g(s)} = 0$. We use $c,c',c_1,\dots$
to denote finite positive constants whose values may change from place to
place and which can only depend on $d$. The dependence of these constants
on additional parameters appears in the notation.

\section{Percolation probability at the critical height}
\label{sec:proof_theorem1}

In this section, we will show Theorem~\ref{thm:crit_BP_dies} which states
that there is no percolation at critical height $h^*$. Its
proof uses arguments that are rather common in the context of branching
processes and is given for sake of completeness. It exploits the fact
that the martingale $(M_n^h)_{n\geq 0}$ introduced in \eqref{def:M_n}
converges almost surely, which induces certain boundedness of the sizes
of the generations $Z_n^h$ (see \eqref{eq:Z_n^h}) as well as of the value
of the field on them. This is then enough to show the almost sure
finiteness of $\mathcal{C}_o^{h^*} \cap \T^+$.

To keep the notation simple, we often omit $h=h^*$ from the notation and
write,
e.g.,  $Z_n \defeq Z_n^{h^*}$, $M_n \defeq M_n^{h^*}$ and
$\chi = \chi_{h^*}$.
Let $\mathcal A$ be the event that $\mathcal{C}_o^{h^*} \cap \T^+$ has
infinite size,
\begin{equation}
  \mathcal A \defeq \set{\abs{\mathcal{C}_o^{h^*} \cap \T^+} = \infty},
\end{equation}
and let $\Phi_n\defeq \max_{x \in Z_n} \phi_x$ to be the maximum of the
field over $Z_n$ (with the convention that a maximum over the empty set
  is $-\infty$). For $H>0$, $N\in \mathbb N$ we define the events
\begin{equation}
  \mathcal{C}_H \defeq \set{\Phi_n \leq H \text{ for all } n \geq 0} \quad \text{and} \quad
  \mathcal D_N \defeq \set{\abs{Z_n} \leq N \text{ for all } n \geq 0}.
\end{equation}
We first show that for $H$ and $N$ large those events
are typical.

\begin{lemma}
  \label{lem:bounded_PhiZn}
  For every $a\ge h^*$ and $\varepsilon >0$ there is $H=H(a, \varepsilon )<\infty$ and
  $N= N(a,\varepsilon )<\infty$ so that
  \begin{align}
    \label{eq:bounded_Phin}
    P_a[\mathcal C_H] &\geq 1-\varepsilon, \\
    \label{eq:bounded_Zn}
    P_a[\mathcal D_N] &\geq 1-\varepsilon.
  \end{align}
\end{lemma}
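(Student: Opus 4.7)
The plan is to exploit the convergence of the non-negative martingale $M_n = \sum_{x \in Z_n} \chi(\phi_x)$ introduced in \eqref{def:M_n}, which at $h = h^*$ reduces to this simple form since $\lambda_{h^*} = 1$. Under $P_a$ we have $M_0 = \chi(a) < \infty$, so by the standard convergence theorem for non-negative martingales, $M_n$ converges $P_a$-almost surely to a finite limit; in particular, $M^* \defeq \sup_{n \geq 0} M_n$ is $P_a$-a.s.~finite. Consequently, given $\varepsilon > 0$, I may fix $K = K(a, \varepsilon) < \infty$ large enough that $P_a[M^* \leq K] \geq 1 - \varepsilon$.

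The second ingredient is the lower bound from Proposition~\ref{prop:chi_bounds}(b), applied at $h = h^*$. Since $\lambda_{h^*} = 1$, the exponent $1 - \log_d(\lambda_{h^*})$ equals $1$, so \eqref{eq:chi_lower_bound} becomes $\chi(x) \geq c_{h^*} \cdot x$ for all $x \geq h^*$. Because every $x \in Z_n$ satisfies $\phi_x \geq h^*$, this gives $\chi(\phi_x) \geq c_{h^*} \phi_x \geq c_{h^*} h^*$, where the strict positivity $h^* > 0$ from Proposition~\ref{prop:L_h-chi-connection} is essential.

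Combining these two ingredients immediately yields both estimates. Summing the pointwise bound over $x \in Z_n$ gives $M_n \geq c_{h^*} h^* \abs{Z_n}$, whence on the event $\{M^* \leq K\}$ one has $\abs{Z_n} \leq K/(c_{h^*} h^*)$ for every $n \geq 0$, proving \eqref{eq:bounded_Zn} with $N = \lceil K/(c_{h^*} h^*) \rceil$. For \eqref{eq:bounded_Phin}, when $Z_n \neq \emptyset$ we pick $x^* \in Z_n$ attaining the maximum $\Phi_n$ and use $M_n \geq \chi(\Phi_n) \geq c_{h^*} \Phi_n$ to obtain $\Phi_n \leq K/c_{h^*}$ on $\{M^* \leq K\}$; when $Z_n = \emptyset$ the convention $\Phi_n = -\infty$ makes the bound trivial, so setting $H = K/c_{h^*}$ completes the argument.

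The proof is essentially a packaging of the non-negative martingale convergence theorem together with the linear lower bound on $\chi$, so I do not anticipate any substantive obstacle. The only step that genuinely uses the critical case $h = h^*$ is the observation that the exponent in \eqref{eq:chi_lower_bound} collapses to $1$, which simultaneously produces the linear-in-$\Phi_n$ and constant-in-$\abs{Z_n}$ lower bounds on $M_n$ needed to control the two events.
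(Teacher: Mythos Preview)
Your proof is correct and follows essentially the same approach as the paper: both arguments exploit the $P_a$-a.s.\ finiteness of $\sup_n M_n$ from the martingale convergence theorem, combined with the lower bound on $\chi$ from Proposition~\ref{prop:chi_bounds}(b) to control $\abs{Z_n}$ and $\Phi_n$ separately. The only cosmetic difference is that you use the explicit linear growth $\chi(x)\ge c_{h^*}x$ (and hence $\chi\ge c_{h^*}h^*>0$ on $[h^*,\infty)$) for both bounds, whereas the paper invokes the qualitative consequence $\chi(x)\to\infty$ for $\Phi_n$ and a uniform positive lower bound for $\abs{Z_n}$; your packaging is arguably a bit cleaner but the substance is identical.
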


\begin{proof}
  From the almost sure convergence
  of the non-negative martingale $M$, it follows that
  \begin{equation}
    \label{eq:Mn-convergence}
    \text{for every $\varepsilon >0$ there is $N<\infty$ such that }
    P_a\big[\sup_{n\ge 0} M_n \le N\big] \geq 1-\varepsilon.
  \end{equation}
  Indeed, assume that the statement does not hold. Then, there exists a
  $\varepsilon_0 > 0$ such that the events $A_k \defeq \{\sup M_n \ge k \}$
  satisfy $P_a[A_k] > \varepsilon_0$ for all $k\in \mathbb N$. Since
  $A_{k+1}\subseteq A_k$, this implies
  $P_a[\sup M_n = \infty] > \varepsilon_0$ which contradicts
  the almost sure convergence of $M$ to a finite limit.

  To prove \eqref{eq:bounded_Phin}, observe that
  $\lambda_{h^*}=1$ implies that
  $M_n = \sum_{x\in Z_n} \chi(\phi_x) \geq \chi(\Phi_n)$. Therefore,
  setting $H' = \inf \{\chi (h):h>H\}$ and
  using that $\Phi_n > H $ implies $\chi (\Phi_n) \ge H'$, we
  obtain
  \begin{equation}
    \label{eq:inclusions}
    \mathcal C_H = \big\{\sup \Phi_n \le H\big\}
    \supseteq \big\{\sup \chi (\Phi_n) < H'\big\}
    \supseteq \big\{\sup M_n < H'\big\}.
  \end{equation}
  By Proposition \ref{prop:chi_bounds} $\lim_{x\to\infty}\chi(x) = \infty$,
  and thus for $N$ as in \eqref{eq:Mn-convergence} there is $H'$ so that
  $H'\ge N+1$.  Estimate \eqref{eq:bounded_Phin} then follows from
  \eqref{eq:Mn-convergence} and~\eqref{eq:inclusions}.

  Estimate \eqref{eq:bounded_Zn} is proved similarly. By
  \eqref{eq:chi_lower_bound} there is $c>0$ such that
  $ c \leq \chi(h)$  for every $h \in [h^*, \infty)$. Therefore,
  $M_n \geq c \abs{Z_n}$, and thus
  $\mathcal D_N \supseteq \{{\sup M_n \le c N}\}$.
  Claim~\eqref{eq:Mn-convergence} then directly implies
  \eqref{eq:bounded_Zn}.
\end{proof}

We now argue that the events $\mathcal C_H$ and
$\mathcal D_N$ exclude the percolation event $\mathcal A$.

\begin{lemma}
  \label{lem:dead_bounded}
  $P_a[\mathcal A \cap \mathcal{C}_H \cap \mathcal D_N] = 0$
  for every $a\ge h^*$, $H\geq h^*$,  and  $N \geq 1$.
\end{lemma}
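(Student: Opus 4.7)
The plan is to exploit both boundedness hypotheses carried by $\mathcal C_H \cap \mathcal D_N$ in order to derive, at every generation, a uniform positive lower bound for the conditional probability of extinction at the next step; iterating this bound then forces $P_a[\mathcal A \cap \mathcal C_H \cap \mathcal D_N] = 0$.

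First I would use the branching representation \eqref{eq:BP-representation}: conditionally on $\mathcal F_n$, a vertex $x\in Z_n$ with type $\phi_x = a$ has $d$ potential offspring in $\T^+$ whose types are i.i.d.\ distributed as $\frac{a}{d}+Y$ with $Y\sim \N(0,\sigma_Y^2)$. The probability that none of them survives is
\begin{equation*}
  q(a) \defeq P\Big[Y + \frac{a}{d} < h^*\Big]^d,
\end{equation*}
which is strictly positive for every $a\in\R$ because $Y$ has full support on the real line. The key observation is that $a \mapsto q(a)$ is non-increasing, so on $[h^*, H]$ it is bounded below by
\begin{equation*}
  p_H \defeq q(H) = P\Big[Y < h^* - \frac{H}{d}\Big]^d \in (0,1),
\end{equation*}
and this is the only step where the finiteness of $H$ actually enters.

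Next I would localise the tail-type events $\mathcal C_H$ and $\mathcal D_N$ through their $\mathcal F_n$-measurable truncations
\begin{equation*}
  A_n \defeq \set{Z_n \neq \emptyset} \cap \set{\Phi_k \le H \text{ for all } k \le n}
  \cap \set{\abs{Z_k}\le N \text{ for all } k \le n},
\end{equation*}
which satisfy $\mathcal A \cap \mathcal C_H \cap \mathcal D_N \subseteq A_n$ for every $n\ge 0$ (the tree structure makes $\mathcal A$ equivalent to $\set{Z_n \neq \emptyset \text{ for all }n}$). Since the offspring variables of distinct $x\in Z_n$ are independent given $\mathcal F_n$, on $A_n$ one has
\begin{equation*}
  P_a[Z_{n+1}=\emptyset \mid \mathcal F_n] = \prod_{x\in Z_n} q(\phi_x) \ge p_H^{\abs{Z_n}} \ge p_H^N,
\end{equation*}
and combining this with $A_{n+1}\subseteq A_n\cap\set{Z_{n+1}\neq\emptyset}$ yields the geometric recursion $P_a[A_{n+1}] \le (1-p_H^N)\, P_a[A_n]$, hence $P_a[A_n]\to 0$ and the lemma follows by monotone convergence.

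The only genuine subtlety is that $\mathcal C_H$ and $\mathcal D_N$ are not $\mathcal F_n$-measurable, which is why the iteration must be carried out through the increasing family of truncations $A_n$ rather than a single application of the Markov property at a fixed time; once the uniform per-vertex extinction bound $p_H$ is in hand, everything else reduces to an elementary geometric induction, so no martingale input or spectral information about $L_{h^*}$ is required in this particular step.
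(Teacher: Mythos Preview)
Your proof is correct and follows essentially the same route as the paper: your truncation $A_n$ coincides with the paper's $\mathcal A_n \cap \Theta_n$, and both arguments establish a uniform one-step extinction lower bound on this event and iterate geometrically. The only cosmetic difference is that you compute the per-vertex extinction probability $q(a)$ explicitly and use its monotonicity, whereas the paper phrases the same bound via the stochastic domination \eqref{eq:stoch_dom}.
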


\begin{proof}
  For given $H \geq h^*$ and $N > 0$, let $\Theta_n$ be the event that
  up to generation $n$, no generation of $\mathcal{C}_o^{h^*}\cap \T^+$ exceeds $N$ and
  the field is bounded by $H$,
  \begin{equation}
    \Theta_{n} \defeq \{|Z_k|\leq N \text{ and } \Phi_k \leq H \text{ for all }
      k\leq n\}, \qquad n \geq 0,
  \end{equation}
  and let $\mathcal A_n$ be the event that the $n$-th generation is non-empty,
  \begin{equation}
    \mathcal A_n \defeq \{|Z_n|>0\}, \qquad n \geq 0.
  \end{equation}
  The sequences $\mathcal A_n$ and $\Theta_n$ are decreasing, with
  $\cap_{n\geq 0} \Theta_{n} = \mathcal{C}_H \cap \mathcal D_N$,
  $\cap_{n\geq 0} \mathcal A_n = \mathcal A$. Therefore,
  \begin{equation}
    \label{eq:P-limit}
    P_a[\mathcal A \cap \mathcal  C_H \cap \mathcal D_N]
    = \lim_{n\to\infty} P_a[\mathcal A_n \cap \Theta_n].
  \end{equation}

  We will show that this limit is zero.
  Conditionally on the event $\Theta_n \cap
  \mathcal A_n$, the number of particles in $Z_n$ is limited by
  $N$ and their types are bounded by $H$. Therefore, by the stochastic domination
  \eqref{eq:stoch_dom},  the conditional probability that $Z_{n+1}$ is
  empty can be bounded from below by the probability that $N$ independent
  particles of type $H$ have no descendants,
  \begin{equation}
    P_a[\mathcal A_{n+1}^c \mid  \mathcal A_{n}\cap\Theta_{n} ] \geq P_H[\abs{Z_1} = 0]^N
    \ge c > 0.
  \end{equation}
  As consequence, since
  $\mathcal A_{n+1}\cap \Theta_{n+1} \subset \mathcal A_n \cap \Theta_n$,
  \begin{equation}
    \frac{P_a[\mathcal A_{n+1} \cap \Theta_{n+1}]}{P_a[\mathcal A_{n} \cap \Theta_{n}]}
    = P_a[\mathcal A_{n+1} \cap \Theta_{n+1}\mid \mathcal A_{n} \cap \Theta_{n}] \le
    P_a[\mathcal A_{n+1} \mid  \mathcal A_{n}\cap\Theta_{n} ]  \le 1-c.
  \end{equation}
  Applying this bound inductively proves that
  the limit on the right-hand side of \eqref{eq:P-limit} is zero,
  completing the proof.
\end{proof}

With help of Lemmas~\ref{lem:bounded_PhiZn},
\ref{lem:dead_bounded}, it is straightforward to complete the proof of
Theorem~\ref{thm:crit_BP_dies}.

\begin{proof}[Proof of Theorem \ref{thm:crit_BP_dies}]
  By Lemma~\ref{lem:bounded_PhiZn}, for an arbitrary
  $\varepsilon > 0$ and $a\in \mathbb R$ there is $H$ and
  $N$ such that $P_a[\mathcal{C}_H] \geq 1-\varepsilon$ and $P_a[\mathcal D_N] \geq 1-\varepsilon$. Therefore,
  using also Lemma~\ref{lem:dead_bounded}
  \begin{equation}
    0 =P_a[\mathcal A \cap \mathcal{C}_H \cap \mathcal D_N] \geq P_a[\mathcal A] - 2\varepsilon.
  \end{equation}
  Since
  $\varepsilon $ is arbitrary, this implies $P_a[\mathcal A]=0$ as required.
  The second claim of the theorem follows from the equality
  \begin{equation}
    P[\mathcal A] = \int P_a[\mathcal A]\, \nu (\D a) =0,
  \end{equation}
  which holds due to \eqref{eq:BP-representation}.
\end{proof}

\section{Distribution of the size of the critical cluster}
\label{sec:proof_theorem2}

In this section we prove Theorem \ref{thm:cluster_size_tail} describing
the asymptotic behaviour of the size of the connected clusters
$\abs{\mathcal{C}_o^{h^*}}$ and $\abs{\mathcal{C}_o^{h^*} \cap \T^+}$ in
the critical case $h = h^*$.

To this end we denote by $T$ the total size of $\mathcal C_o^{h^*}$ restricted
to the forward tree,
\begin{equation}
  T \defeq \abs{\mathcal{C}_o^{h^*} \cap \T^+},
\end{equation}
and let $\mathcal L_{a}(s)$ be its
Laplace transform under $P_a$,
\begin{equation}
  \mathcal L_{a}(s) \defeq E_a[e^{-s T}],\quad a \in \R, s \geq 0.
\end{equation}

The proof of Theorem~\ref{thm:cluster_size_tail} is based on the
following classical Tauberian theorem, that connects the asymptotic
behaviour of the cumulative distribution function of a random variable at
infinity and its Laplace transform near zero.
\begin{proposition}[Corollary 8.1.7, \cite{BinGolTeu89}]
  \label{prop:tauberian}
  Let $X$ be a non-negative random variable with cumulative distribution
  function $F$ and Laplace transform $\mathcal L(s)=E[e^{-sX}]$.  For $0 \leq \alpha < 1$
  and a function $\ell:[0,\infty) \to [0,\infty)$ slowly varying at $\infty$ the
  following are equivalent:
  \begin{enumerate}
    \item $1 - \mathcal L(s) \sim \Gamma (1-\alpha )s^\alpha \ell(1/s)$ as $s \to 0^+$,
    \item $1-F(t) \sim  t^{-\alpha }\ell(t)$ as $t \to \infty$.
  \end{enumerate}
\end{proposition}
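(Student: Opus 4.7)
The plan is to establish the two implications separately: (b)$\Rightarrow$(a) is the Abelian direction, while (a)$\Rightarrow$(b) carries essentially all the Tauberian content and reduces to Karamata's Tauberian theorem for Laplace--Stieltjes transforms together with the monotone density theorem.

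For the Abelian direction, I would start from the identity
\begin{equation}
  1 - \mathcal L(s) = \int_0^\infty \bigl(1 - e^{-sx}\bigr)\,\D F(x) = s \int_0^\infty e^{-sx}\bigl(1 - F(x)\bigr) \D x,
\end{equation}
obtained via integration by parts, and substitute $u = sx$ to rewrite it as
\begin{equation}
  1 - \mathcal L(s) = s^\alpha \ell(1/s) \int_0^\infty e^{-u}\, u^{-\alpha}\cdot \frac{(u/s)^\alpha \bigl(1 - F(u/s)\bigr)}{\ell(u/s)}\cdot \frac{\ell(u/s)}{\ell(1/s)}\,\D u.
\end{equation}
Under hypothesis (b) the first inner factor tends to $1$ for each $u>0$, and slow variation gives $\ell(u/s)/\ell(1/s)\to 1$. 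Potter's bounds on slowly varying functions supply an integrable envelope, uniform in small $s$, so dominated convergence yields
\begin{equation}
  1 - \mathcal L(s) \sim s^\alpha \ell(1/s) \int_0^\infty e^{-u} u^{-\alpha}\,\D u = \Gamma(1-\alpha)\, s^\alpha \ell(1/s),
\end{equation}
which is (a).

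For the Tauberian direction, I would pass to the nondecreasing integrated tail $U(x) \defeq \int_0^x \bigl(1 - F(t)\bigr)\D t$. Integration by parts gives $\int_0^\infty e^{-sx}\,\D U(x) = (1 - \mathcal L(s))/s$, so hypothesis (a) translates into $\int_0^\infty e^{-sx}\,\D U(x) \sim \Gamma(1-\alpha)\,s^{-(1-\alpha)} \ell(1/s)$ as $s\to 0^+$. Karamata's Tauberian theorem, applied with index $\rho = 1-\alpha \in (0,1]$, then yields $U(x) \sim x^{1-\alpha}\ell(x)/(1-\alpha)$. Because $U' = 1 - F$ is monotone non-increasing, the monotone density theorem legitimates differentiating this asymptotic and produces $1 - F(x) \sim x^{-\alpha}\ell(x)$, i.e.~(b).

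The main obstacle is Karamata's Tauberian theorem itself. I would prove it by rescaling: set $U_s(t) \defeq s^{1-\alpha} U(t/s)/\ell(1/s)$, so that the hypothesis and the slow variation of $\ell$ give $\int_0^\infty e^{-\lambda t}\,\D U_s(t) \to \Gamma(1-\alpha)\,\lambda^{-(1-\alpha)}$ for every $\lambda > 0$. The Helly selection principle furnishes subsequential vague limits of the nondecreasing family $\{U_s\}$, and the Stone--Weierstrass theorem --- applied to the algebra generated by $\{e^{-\lambda\,\cdot}:\lambda>0\}$, dense in continuous functions on the compactified half-line vanishing at infinity --- upgrades convergence against exponentials into vague convergence of distribution functions, pinning the limit down uniquely as $t\mapsto t^{1-\alpha}/(1-\alpha)$. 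The subsequent monotone density step is a shorter Abelian-style sandwich argument built on the Karamata representation of slowly varying functions.
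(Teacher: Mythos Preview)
The paper does not supply its own proof of this proposition: it is quoted verbatim as Corollary~8.1.7 of \cite{BinGolTeu89} and used as a black box in the proof of Theorem~\ref{thm:cluster_size_tail}. There is therefore nothing in the paper to compare your argument against.

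That said, your sketch is the standard route and essentially reconstructs the proof in \cite{BinGolTeu89}: the Abelian direction via the integration-by-parts identity $1-\mathcal L(s)=s\int_0^\infty e^{-sx}(1-F(x))\,\D x$, the substitution $u=sx$, Potter bounds and dominated convergence; and the Tauberian direction by passing to the integrated tail $U(x)=\int_0^x(1-F(t))\,\D t$, applying Karamata's Tauberian theorem to its Laplace--Stieltjes transform, and finishing with the monotone density theorem using that $1-F$ is non-increasing. The outline is correct and complete at the level of a sketch; the only points that would need to be written out carefully are the Potter-bound domination (which requires handling the region where $u/s$ is small, since Potter bounds hold only for large arguments) and the precise hypotheses of the monotone density theorem.
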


In view of this proposition, to show Theorem \ref{thm:cluster_size_tail}
we first need to control the asymptotic behaviour of $1-\mathcal L_a(s)$.

\begin{proposition}
  \label{prop:L-convergence}
  For every $a \geq h^*$,
  \begin{equation}
    \lim_{s \downarrow 0}s^{-1/2}\big(1-\mathcal L_{a}(s)\big) = C_1 \Gamma(1/2) \chi(a),
  \end{equation}
  where $C_1$ was defined  in \eqref{eq:Cx}.
\end{proposition}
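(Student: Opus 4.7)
The plan is to derive a nonlinear fixed-point equation for $\mathcal L_a(s)$ from the branching representation \eqref{eq:BP-representation} and extract the leading $s^{1/2}$ behaviour by pairing with the principal eigenfunction $\chi$. Setting $g_s(a) \defeq 1 - \mathcal L_a(s)$ (extended by $0$ on $(-\infty, h^*)$, since a particle of type below $h^*$ contributes $0$ to $T$), the independence of the $d$ subtrees rooted at the children of $o$ in $\T^+$ combined with \eqref{eq:BP-representation} and definition \eqref{eq:L_h} yields the fixed-point equation
\[
 1 - g_s(a) = e^{-s}\Big(1 - \tfrac{1}{d}L[g_s](a)\Big)^d, \qquad a \ge h^*,
\]
where $L = L_{h^*}$. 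Taylor-expanding $e^{-s}$ and $(1-x)^d$ to second order transforms this into
\[
 g_s = L[g_s] + s - \frac{d-1}{2d}\bigl(L[g_s]\bigr)^2 + E_s,
\]
with an error $E_s$ controlled by $s^2 + s\,L[g_s] + (L[g_s])^3$.

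The main heuristic is then to take the $L^2(\nu)$ inner product of this expansion with $\chi$. By self-adjointness of $L$ and $L\chi = \chi$ (Proposition \ref{prop:L_h-chi-connection}) the linear terms cancel and one obtains
\[
 s\,\inp{1}{\chi}_\nu = \frac{d-1}{2d}\,\inp{(L[g_s])^2}{\chi}_\nu - \inp{E_s}{\chi}_\nu.
\]
If $g_s \sim c\,\chi\, s^{1/2}$ in a sense strong enough that $\inp{(L[g_s])^2}{\chi}_\nu / s \to c^2 \inp{\chi}{\chi^2}_\nu$ (using $L\chi = \chi$) and $\inp{E_s}{\chi}_\nu = o(s)$, dividing by $s$ and letting $s \downarrow 0$ produces
\[
 c^2 = \frac{2d}{d-1}\cdot\frac{\inp{1}{\chi}_\nu}{\inp{\chi}{\chi^2}_\nu},
\]
so $c = \Gamma(1/2)\,C_1$, reproducing the claimed constant and explaining its algebraic form. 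This pairing against the principal eigenfunction is precisely the Fredholm-type obstruction that pins down the prefactor.

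The main obstacle is to make the ansatz $g_s \sim c\chi\, s^{1/2}$ rigorous. I would proceed by a compactness-plus-uniqueness argument. First, Theorem \ref{thm:crit_BP_dies} gives $g_s(a) \downarrow 0$ pointwise as $s \downarrow 0$; iterating the fixed-point equation and invoking the hypercontractivity estimate of Proposition \ref{prop:L_h-hypercontractivity} (which transfers $L^p$ control through $L$), I expect to establish that $\psi_s \defeq s^{-1/2} g_s$ is uniformly bounded in $L^p(\nu)$ for some $p > 2$; securing this a priori bound is the technically most demanding step. Any weak-$L^p$ subsequential limit $\psi_0$ of $(\psi_s)_{s \downarrow 0}$ satisfies, after dividing the expansion by $s^{1/2}$ and passing to the limit (the nonlinear term $(L[g_s])^2/s^{1/2}$ vanishes since $L[g_s] = O(s^{1/2})$), the equation $\psi_0 = L \psi_0$. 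Simplicity of the eigenvalue $1$ (Proposition \ref{prop:L_h-chi-connection}) then forces $\psi_0 = c_0\chi$, and the pairing identity above pins $c_0 = \Gamma(1/2)\,C_1$. Uniqueness of the subsequential limit then implies convergence of the full family $\psi_s$, yielding the proposition.
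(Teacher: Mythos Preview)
Your overall strategy --- the fixed-point equation for $g_s$, Taylor expansion, and pairing with $\chi$ to extract the constant --- is exactly the backbone of the paper's argument, and the identity you derive for $c^2$ is the content of Lemma~\ref{lem:as_limit}. The difference lies in how the a priori bound is obtained, and this is where your proposal has a genuine gap.

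You correctly flag that bounding $\psi_s = s^{-1/2}g_s$ uniformly is the crux, but ``iterating the fixed-point equation'' will not deliver this: since $\lambda_{h^*}=1$, the operator $L$ does not contract, and iteration alone gives no decay. The paper instead decomposes $g_s = a_1(s)\chi + \beta_s$ with $\beta_s \perp \chi$ in $L^2(\nu)$ and treats the two pieces separately. Pairing the equation with $\chi$ (your identity, but now using the \emph{lower} bound $f(x)\ge c_1 x^2$ together with $\chi\ge c>0$ on $[h^*,\infty)$) gives $a_1(s)^2 \le cs$ directly (Lemma~\ref{lem:as-bound}). For the orthogonal part, the spectral gap $|\lambda_2|<1$ yields $\|\beta_s\| \le (1-|\lambda_2|)^{-1}\|g_s - L[g_s]\|$, and the fixed-point equation bounds the right-hand side by $cs + c\|g_s\|^2$, which closes to $\|\beta_s\|\le cs$ (Lemma~\ref{lem:bz-bound}). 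Your compactness route would go through once these bounds are in place, but the spectral-gap step is the missing idea, not a technicality that hypercontractivity alone covers.

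A second, smaller gap: the proposition asserts \emph{pointwise} convergence for each $a\ge h^*$, whereas your argument yields at best $L^2(\nu)$ (or weak $L^p$) convergence. The paper upgrades this by observing that $a\mapsto g_s(a)$ is nondecreasing (from the stochastic domination \eqref{eq:stoch_dom}) and that $\chi$ is continuous on $[h^*,\infty)$; monotone functions converging in $L^2$ to a continuous limit converge pointwise.
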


We start with some basic observations and definitions that will eventually
lead to the proof of this proposition.
By Theorem~\ref{thm:crit_BP_dies},  $P_a[T =\infty] = 0$ for every $a \in \R$,
and thus
\begin{equation}
  \label{eq:La_to_one}
  \lim_{s \downarrow 0}\big(1-\mathcal L_{a}(s)\big) = 0.
\end{equation}
Moreover, the Laplace transform $\mathcal L_a(s)$ satisfies the recursive equation
\begin{equation}
  \label{eq:self-reference}
  \mathcal L_{a}(s) = \begin{cases}
    e^{-s}\, \big(E_Y[\mathcal L_{\frac{a}{d} + Y}(s)]\big)^d, \quad&\text{if }
    a \geq h^*, \\
    1, &\text{if }a < h^*,
  \end{cases}
\end{equation}
where, as in \eqref{eq:L_h}, $Y\sim \mathcal N(0,\sigma_Y^2)$. To see
this in the case $a\ge h^*$ (the other case is trivial), it is sufficient
to write  $T=1+T_1+\dots+T_d$, where $T_i$ is the size of the
intersection of $\mathcal C_0^{h^*}$ with the sub-tree of the $i$-th
neighbour $x_i$ of the root $o$, and observe that $T_1,\dots,T_d$ are
conditionally independent given $\phi_o=a$ with respective Laplace
transforms
\begin{equation}
  E_a[e^{-sT_i}]=E_a\big[E_a[e^{-sT_i}\mid \phi_{x_i}]\big]
  = E_a[\mathcal L_{\phi_{x_i}}(s)]
  =E_Y[\mathcal L_{\frac{a}{d} + Y}(s)],
\end{equation}
where the last equality uses the branching process representation
\eqref{eq:BP-representation} of $\phi$.

We further set
\begin{equation}
  \gamma_s(a) \defeq 1 - \mathcal L_{a}(s),
\end{equation}
and note that $\gamma_s(a) = 0$ for $a < h^*$, $\gamma_s(a)\in [0,1]$ for
every $s \ge 0$ and $a \in \R$, and therefore for every $s\ge0$,
$\gamma_s \in L^2(\nu)$.  By \eqref{eq:self-reference},  using the
operator $L = L_{h^*}$ from~\eqref{eq:L_h}, for $a\ge h^*$,
\begin{equation}
  1-\gamma_s(a)
  = e^{-s}E_Y\Big[1-\gamma_s\Big(\frac ad + Y\Big)\Big]^d
  = e^{-s}\Big(1-\frac{1}{d}L[\gamma_s](a)\Big)^d.
\end{equation}
Rearranging this equality implies that for $a \geq h^*$ and $s \ge 0$,
\begin{equation}
  \label{eq:base_equation}
  1-e^{-s} - \gamma_s(a) + e^{-s}L[\gamma_s](a) =
  e^{-s}f(L[\gamma_s](a)),
\end{equation}
where the function $f:[0,d] \to \R$ is defined by
\begin{equation}
  \label{eq:f_definition}
  f(x) = f_d(x) \defeq  \Big(1-\frac{x}{d}\Big)^d - 1 + x
  = \sum_{k=2}^d \binom{d}{k} (-1)^k\Big(\frac{x}{d}\Big)^k.
\end{equation}
Equation \eqref{eq:base_equation} will be the starting point for several
proofs that follow.

We continue with a simple observation about the function $f$.

\begin{lemma}
  \label{lem:f-bound}
  For any $d \geq 2$ there are constants $c_1$, $c_2$ such that for all $x\in [0,d]$,
  \begin{equation}
    c_1 x^2 \leq f(x) \leq c_2 x^2.
  \end{equation}
\end{lemma}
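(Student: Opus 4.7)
The plan is to reduce the bound to a compactness statement after observing that $f(x)/x^2$ extends to a continuous, strictly positive function on the closed interval $[0,d]$.

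First I would argue continuity and positivity at the origin. From the expansion in \eqref{eq:f_definition}, the quotient
\begin{equation*}
  \frac{f(x)}{x^2} = \sum_{k=2}^d \binom{d}{k}(-1)^k \frac{x^{k-2}}{d^k}
\end{equation*}
is a polynomial in $x$ of degree $d-2$, whose value at $x=0$ is
$\binom{d}{2}/d^2 = (d-1)/(2d) > 0$. Thus $f(x)/x^2$ is continuous on $[0,d]$ (after defining it by the polynomial formula above, which coincides with the original ratio on $(0,d]$) and strictly positive at $x=0$.

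Next I would show that $f(x) > 0$ for every $x \in (0, d]$, which, combined with $x^2 > 0$, gives positivity of $f(x)/x^2$ on the whole interval $(0,d]$. The derivative
\begin{equation*}
  f'(x) = 1 - \Big(1-\frac{x}{d}\Big)^{d-1}
\end{equation*}
is strictly positive for $x\in(0,d)$, because $(1-x/d)^{d-1}\in[0,1)$, and $f(0)=0$; consequently $f$ is strictly increasing on $[0,d]$ and remains strictly positive away from the origin.

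The conclusion then follows by compactness: the continuous function $f(x)/x^2$ on the compact interval $[0,d]$ attains its minimum $c_1$ and maximum $c_2$, and by the two preceding paragraphs both extrema are finite and strictly positive. The main (and only) obstacle is checking that the extension at $x=0$ is indeed continuous and positive, but this follows immediately from the polynomial identity above, so the proof is essentially routine.
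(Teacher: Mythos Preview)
Your proof is correct and follows essentially the same approach as the paper: both arguments show that $f(x)/x^2$ is continuous and strictly positive on $[0,d]$ and then invoke compactness. Your version is arguably a bit more direct, since you observe that $f(x)/x^2$ is literally a polynomial in $x$ (so continuity and the value at $0$ are immediate), whereas the paper argues via $f(0)=f'(0)=0$, $f''(0)>0$, and strict convexity to handle a neighbourhood of the origin separately before treating the remaining compact interval; but the substance is the same.
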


\begin{proof}
  From \eqref{eq:f_definition} it is easy to see that  $f$ is smooth,
  strictly convex on $[0,d]$ with $f(0)=f'(0)=0$ and $f''(0)=c >0$. It
  follows that $c x^2/2 \le f(x) \le 2 c x^2$ for $x$ in a certain interval
  $[0,\varepsilon ]$, and that $f$ is strictly positive and bounded on
  $(\varepsilon ,d]$. From these two facts the lemma easily follows.
\end{proof}

In the remainder of this section we exclusively work in $L^2(\nu)$, and
denote by $\norm{\,\cdot\,}$ and $\inp\cdot\cdot$ the corresponding norm
and scalar product. Since $L$ is self-adjoint (see
  Proposition~\ref{prop:L_h-chi-connection}), $L^2(\nu)$ has an
orthonormal basis consisting of the eigenfunctions $\set{e_k}_{k\geq 1}$
of $L$ corresponding to the eigenvalues $\set{\lambda_k}_{k\geq 1}$. Since
$h= h^*$, by Proposition~\ref{prop:L_h-chi-connection} we may assume that
$1=\lambda_1 > \abs{\lambda_2} \geq \abs{\lambda_3} \geq \dots \geq 0$
with $\lambda_k \to 0$ as $k\to \infty$, and also
$e_1 = \chi$. Therefore
\begin{equation}
  \gamma_s = \sum_{k\geq 1} a_k(s) e_k, \qquad
  \text{with}\qquad
  a_k(s) \defeq \inp{\gamma_s}{e_k}.
\end{equation}
By considering the first summand separately, we write $\gamma_s$ as
\begin{equation}
  \label{eq:decomp_y}
  \gamma_s = \alpha_s + \beta_s,
  \qquad \text{with}\qquad
  \alpha_s \defeq a_1(s) \chi \text { and }\beta_s \defeq \sum_{k\geq 2}a_k(s)e_k.
\end{equation}
Observe that
\begin{equation}
  \label{eq:decomp_Ly}
  L[\gamma_s] = \sum_{k\geq 1} \lambda_k a_k(s) e_k = \alpha_s + \sum_{k\geq 2} \lambda_k a_k(s) e_k
  = \alpha_s + L [\beta_s].
\end{equation}
and thus
\begin{equation}
  \label{eq:inp_gamma_L_gamma_is_zero}
  \inp{\gamma_s - L[\gamma_s]}{\chi} = 0.
\end{equation}
Since $\gamma_s(a)\in[0,1]$, the definition \eqref{eq:L_h} of $L$ and
Lemma~\ref{lem:f-bound} imply that
\begin{equation}
  \label{eq:LfLbounds}
  0\le L[\gamma_s] \le d
  \qquad\text{and}\qquad
  0\le f(L[\gamma_s])\le c_2 L[\gamma_s]^2 \le c_2 d^2.
\end{equation}
From the pointwise convergence \eqref{eq:La_to_one} of $\gamma_s$ to $0$,
using successively
the dominated convergence theorem and the continuity
of $L$, it follows that
\begin{equation}
  \label{eq:convergence_to_0_in_L2}
  \lim_{s\downarrow 0}\gamma_s
  =\lim_{s\downarrow 0}L[\gamma_s]
  =\lim_{s\downarrow 0}f(L[\gamma_s]) = 0 \quad \text{in } L^2(\nu).
\end{equation}
In particular $a_k(s) \to 0$ as $s\downarrow  0$ for all $k\ge 1$.
Finally,  since $\norm{\gamma_s -
  L[\gamma_s]}^2 = \sum_{k\geq 2} (1-\lambda_k)^2 a_k(s)^2 \geq
\sum_{k\geq 2} (1-\abs{\lambda_2})^2 a_k(s)^2 = (1-\abs{\lambda_2})^2
\norm{\beta_s}^2$, it holds that
\begin{equation}
  \label{eq:norm_beta_bound}
  \norm{\beta_s} \leq \frac{1}{1-\abs{\lambda_2}} \norm{\gamma_s - L[\gamma_s]}.
\end{equation}

The following three lemmas are the main preparatory steps for the proof of
Proposition~\ref{prop:L-convergence}. They together show that
$\alpha_s$ dominates $\beta_s$ in norm and then estimate
$\alpha_s$ precisely.

\begin{lemma}
  \label{lem:as-bound}
  There is a constant $c<\infty$ such that $\norm{\alpha_s}^2 \leq c s$ for
  all $s$ small enough.
\end{lemma}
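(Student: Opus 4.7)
My plan is to test equation \eqref{eq:base_equation} against the principal eigenfunction $\chi$ and extract an $O(s)$ bound on $a_1(s)^2 = \norm{\alpha_s}^2$. Since $\chi$ is supported on $[h^*,\infty)$ and \eqref{eq:base_equation} is valid precisely on $[h^*,\infty)$, pairing both sides in the $L^2(\nu)$ inner product against $\chi$ is legitimate. Using the self-adjointness of $L$ together with $L\chi = \chi$ (since $\lambda_{h^*}=1$), the two linear terms simplify via $\inp{L[\gamma_s]}{\chi} = \inp{\gamma_s}{L\chi} = \inp{\gamma_s}{\chi} = a_1(s)$, and the integrated identity becomes
\begin{equation*}
(1-e^{-s})\bigl[\inp{1}{\chi}-a_1(s)\bigr] = e^{-s}\inp{f(L[\gamma_s])}{\chi}.
\end{equation*}

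Next I would apply the lower bound $f(x)\ge c_1 x^2$ from Lemma~\ref{lem:f-bound} to the right-hand side. Combined with $1-e^{-s}\le s$ and $\inp{1}{\chi}-a_1(s)\le \inp{1}{\chi} < \infty$ (finiteness of $\inp{1}{\chi}$ follows from the at-most-linear growth of $\chi=\chi_{h^*}$ given by Proposition~\ref{prop:chi_bounds}(a) together with the Gaussian tails of $\nu$), this gives
\begin{equation*}
\inp{L[\gamma_s]^2}{\chi} \le C s \quad \text{for $s$ small enough}.
\end{equation*}

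To convert this into an upper bound on $a_1(s)^2$, I invoke the Cauchy--Schwarz inequality with respect to the finite positive measure $\chi\,d\nu$ on $[h^*,\infty)$, applied to the pair $(L[\gamma_s], 1)$:
\begin{equation*}
a_1(s)^2 = \inp{L[\gamma_s]}{\chi}^2 = \Bigl(\int L[\gamma_s]\cdot 1\, \chi\,d\nu\Bigr)^2 \le \inp{L[\gamma_s]^2}{\chi}\cdot \inp{1}{\chi} \le c\,s,
\end{equation*}
which, since $\norm{\alpha_s}^2 = a_1(s)^2$, is exactly the required estimate. The step I expect to be the main conceptual obstacle is noticing that one should test \eqref{eq:base_equation} against $\chi$ (rather than against $\gamma_s$ itself): the eigenvalue identity $L\chi=\chi$ cancels the dominant linear contribution, exposing the quadratic nonlinearity $f(L[\gamma_s])\asymp L[\gamma_s]^2$ as the object that governs the $O(s)$-behaviour. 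After that observation, Lemma~\ref{lem:f-bound} and a weighted Cauchy--Schwarz close the argument with no further work.
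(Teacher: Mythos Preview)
Your proof is correct and follows essentially the same route as the paper: both pair \eqref{eq:base_equation} with $\chi$ to obtain the identity $(1-e^{-s})(\inp{1}{\chi}-a_1(s)) = e^{-s}\inp{f(L[\gamma_s])}{\chi}$, then use the quadratic lower bound on $f$ from Lemma~\ref{lem:f-bound}. The only difference is in the final extraction of $a_1(s)^2$: the paper uses the pointwise lower bound $\chi\ge c>0$ on $[h^*,\infty)$ (Proposition~\ref{prop:chi_bounds}(b)) to get $\inp{L[\gamma_s]^2}{\chi}\ge c\norm{L[\gamma_s]}^2\ge c\norm{\alpha_s}^2$, whereas you use a weighted Cauchy--Schwarz with measure $\chi\,d\nu$ together with $\inp{1}{\chi}<\infty$ (Proposition~\ref{prop:chi_bounds}(a)); both are equally short and valid.
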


\begin{proof}
  Noting that $\chi = 0$ on $(-\infty,h^*)$ and applying $\inp{\cdot}{\chi}$ on both
  sides of \eqref{eq:base_equation} yields
  \begin{equation}
    \label{eq:as-bound}
    (1-e^{-s})\big(\inp{1}{\chi} - a_1(s)\big)
    = e^{-s} \inp[\big]{f(L[\gamma_s])}{\chi}
    \quad \text{for } s \ge 0.
  \end{equation}
  By \eqref{eq:chi_lower_bound}, $\chi > c >0$ on $[h^*, \infty)$.
  Therefore, using also Lemma \ref{lem:f-bound}, the right-hand side of
  \eqref{eq:as-bound} satisfies for $s \le 1 $
  \begin{equation}
    e^{-s}\inp{f(L[\gamma_s])}{\chi} \geq c
    \inp{L[\gamma_s]^2}{\chi} \geq c' \norm{L[\gamma_s]}^2\ge
    c' \norm{\alpha_s}^2 = c' \alpha_1(s)^2,
  \end{equation}
  where the last inequality follows from the orthogonal decomposition \eqref{eq:decomp_Ly}.
  Together with \eqref{eq:as-bound}, this gives
  \begin{equation}
    (1-e^{-s})(\inp{1}{\chi} - a_1(s)) \geq c'\alpha_1(s)^2
    \quad \text{for $s\le 1$}.
  \end{equation}
  Since, as $s\downarrow 0$, $(1-e^{-s}) \sim s$ and $a_1(s)\to 0$
  this finishes the proof.
\end{proof}

\begin{lemma}
  \label{lem:bz-bound}
  There is a constant $c<\infty$ such that $\norm{\beta_s} \leq c s$ for $s$ small
  enough.
\end{lemma}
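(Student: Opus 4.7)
The plan is to derive from the functional equation \eqref{eq:base_equation} an inequality of the form $\norm{\beta_s} \le C_1 s + C_2 \norm{\beta_s}^2$, and then absorb the quadratic term using the $L^2(\nu)$-convergence $\norm{\beta_s}\to 0$ that follows from \eqref{eq:convergence_to_0_in_L2}. The starting point is to rewrite \eqref{eq:base_equation}, noting that $\gamma_s$, $L[\gamma_s]$ and $f(L[\gamma_s])$ all vanish on $(-\infty,h^*)$, in the form
$$
\gamma_s - L[\gamma_s] = 1_{[h^*,\infty)}(1-e^{-s})\bigl(1 - L[\gamma_s]\bigr) - e^{-s}f(L[\gamma_s]),
$$
and take $L^2(\nu)$-norms on both sides. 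The two terms carrying the $(1-e^{-s})$ prefactor are straightforwardly $O(s)$: the uniform bound $\norm{L[\gamma_s]} \le d$ from \eqref{eq:LfLbounds}, together with $\norm{1_{[h^*,\infty)}} \le 1$, suffices.

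The main work is in controlling $\norm{f(L[\gamma_s])}$. Here I would combine Lemma~\ref{lem:f-bound}, which gives $\norm{f(L[\gamma_s])} \le c_2\norm{L[\gamma_s]^2}_{L^2(\nu)} = c_2\norm{L[\gamma_s]}_{L^4(\nu)}^2$, with the hypercontractivity estimate of Proposition~\ref{prop:L_h-hypercontractivity} (applied with $p=2$, $q=4$, admissible since $d\ge 2$), which yields $\norm{L[\gamma_s]}_{L^4(\nu)} \le d\norm{\gamma_s}$. Therefore
$$
\norm{f(L[\gamma_s])} \le c_2 d^2 \norm{\gamma_s}^2 = c_2 d^2 \bigl(\norm{\alpha_s}^2 + \norm{\beta_s}^2\bigr),
$$
and Lemma~\ref{lem:as-bound} supplies $\norm{\alpha_s}^2 \le c s$ for $s$ small. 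Combining these ingredients with \eqref{eq:norm_beta_bound} produces the promised inequality $\norm{\beta_s} \le C_1 s + C_2 \norm{\beta_s}^2$.

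To close the argument, I would invoke \eqref{eq:convergence_to_0_in_L2}: since $\gamma_s\to 0$ in $L^2(\nu)$ (and $\norm{\alpha_s}\to 0$ as well by Lemma~\ref{lem:as-bound}), we have $\norm{\beta_s}\to 0$ as $s\downarrow 0$, so for all sufficiently small $s$ one has $C_2\norm{\beta_s} \le 1/2$. The quadratic term can then be absorbed into the left-hand side, yielding $\norm{\beta_s} \le 2 C_1 s$, which is the required bound. The only delicate step is the estimate of $\norm{f(L[\gamma_s])}$: the pointwise bound $f(L[\gamma_s]) \le c_2 d^2$ from \eqref{eq:LfLbounds} only gives the useless $\norm{f(L[\gamma_s])} \le c$. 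It is the hypercontractivity gain from $L^2$ to $L^4$ that allows one to convert the quadratic-in-$L[\gamma_s]$ bound of Lemma~\ref{lem:f-bound} into a quadratic-in-$\norm{\gamma_s}$ bound, and thereby to make the bootstrap step go through.
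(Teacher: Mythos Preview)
Your proof is correct and follows essentially the same route as the paper: rearrange \eqref{eq:base_equation}, take $L^2(\nu)$-norms, bound the $(1-e^{-s})$-terms by $O(s)$, control $\norm{f(L[\gamma_s])}$ via Lemma~\ref{lem:f-bound} and the hypercontractivity estimate of Proposition~\ref{prop:L_h-hypercontractivity} to get $\norm{L[\gamma_s]^2}\le C\norm{\gamma_s}^2$, then combine with \eqref{eq:norm_beta_bound} and Lemma~\ref{lem:as-bound} and absorb the quadratic term using $\norm{\beta_s}\to 0$. The only cosmetic differences are that the paper invokes the second display of Proposition~\ref{prop:L_h-hypercontractivity} directly (rather than passing through $L^4$) and bounds $\norm{\gamma_s}^2$ via $(\norm{\alpha_s}+\norm{\beta_s})^2$ instead of your cleaner use of orthogonality $\norm{\gamma_s}^2=\norm{\alpha_s}^2+\norm{\beta_s}^2$.
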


\begin{proof}
  We rearrange equation \eqref{eq:base_equation} to obtain
  \begin{equation}
    \gamma_s - L[\gamma_s] = (1-e^{-s}) (1 - L[\gamma_s]) - e^{-s} f(L[\gamma_s])
    \quad \text{on} \quad [h^*, \infty).
  \end{equation}
  Since the left-hand side is identically zero on $(-\infty, h^*)$,
  taking norms yields
  \begin{equation}
    \norm{\gamma_s - L[\gamma_s]} \leq (1-e^{-s}) \norm{1 -
      L[\gamma_s]} + \norm{f(L[\gamma_s])}.
  \end{equation}
  Using \eqref{eq:LfLbounds}, $1-e^{-s} \le s$, and
  Lemma~\ref{lem:f-bound}, this implies
  \begin{equation}
    \label{eq:bz-a}
    \norm{\gamma_s - L[\gamma_s]} \leq (1+d) s + c \norm{L[\gamma_s]^2}.
  \end{equation}
  The norm on the right-hand side can be bounded using
  Proposition~\ref{prop:L_h-hypercontractivity} and the inequality
  $(a+b)^2\le 2 a^2+ 2 b^2$,
  \begin{equation}
    \label{eq:bz-b}
    \norm{L[\gamma_s]^2} \leq
    d \norm{\gamma_s}^2 \leq
    d (\norm{\alpha_s} + \norm{\beta_s})^2 \leq
    2d(\norm{\alpha_s}^2 + \norm{\beta_s}^2).
  \end{equation}
  Combining \eqref{eq:bz-a}, \eqref{eq:bz-b} with
  \eqref{eq:norm_beta_bound}, we obtain that for
  a constant $c<\infty$ and all $s > 0$ small enough
  \begin{equation}
    \norm{\beta_s} \leq c s + c \norm{\alpha_s}^2 + c \norm{\beta_s}^2.
  \end{equation}
  By \eqref{eq:convergence_to_0_in_L2}, $\lim_{s\to 0}\norm{\gamma_s} = 0$
  and thus $\lim_{s\to 0 } \norm{\beta_s} = 0$ as well. The claim of the
  lemma then follows easily from
  Lemma~\ref{lem:as-bound}.
\end{proof}

\begin{lemma}
  \label{lem:as_limit}
  It holds that $\lim_{s\downarrow 0}s^{-1}{a_1(s)}^2 = (C_1 \Gamma(1/2))^2$, where
  $C_1$ was defined in \eqref{eq:Cx}.
\end{lemma}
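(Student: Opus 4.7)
The plan is to analyse equation \eqref{eq:as-bound}, namely $(1-e^{-s})\inp{1-L[\gamma_s]}{\chi} = e^{-s}\inp{f(L[\gamma_s])}{\chi}$, by extracting the precise asymptotics of both sides as $s\downarrow 0$. Since $a_1(s)\to 0$ by \eqref{eq:convergence_to_0_in_L2} and $(1-e^{-s}) = s + O(s^2)$, the left-hand side equals $s\inp{1}{\chi} + o(s)$. The task reduces to showing
\begin{equation*}
  \inp{f(L[\gamma_s])}{\chi} = \tfrac{d-1}{2d}\, a_1(s)^2\, \inp{\chi^2}{\chi} + o(s),
\end{equation*}
after which the claim follows by division together with the identity $(C_1\Gamma(1/2))^2 = \frac{2d}{d-1}\cdot\frac{\inp{1}{\chi}}{\inp{\chi}{\chi^2}}$, directly read off from \eqref{eq:Cx}.

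To this end I would use the Taylor expansion $f(x) = \frac{d-1}{2d} x^2 + r(x)$ with $\abs{r(x)}\le C x^3$ on $[0,d]$ (which follows from \eqref{eq:f_definition}, since $\binom{d}{2}/d^2 = (d-1)/(2d)$), and treat the quadratic and cubic parts separately. For the quadratic part, the decomposition $L[\gamma_s] = \alpha_s + L[\beta_s]$ from \eqref{eq:decomp_Ly} gives
\begin{equation*}
  \inp{L[\gamma_s]^2}{\chi} = a_1(s)^2 \inp{\chi^2}{\chi} + 2 a_1(s)\inp{\chi^2}{L[\beta_s]} + \inp{L[\beta_s]^2}{\chi}.
\end{equation*}
The first term is the desired one. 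The cross term is bounded by $2 a_1(s)\norm{\chi^2}\norm{L[\beta_s]} = O(s^{3/2})$ using Cauchy--Schwarz, Lemmas~\ref{lem:as-bound} and~\ref{lem:bz-bound}, the operator bound $\norm{L}=1$, and the fact that $\chi^2\in L^2(\nu)$ (which follows from the polynomial growth $\chi(a)\le ca$ on $[d,\infty)$ in Proposition~\ref{prop:chi_bounds}, implying $\chi^k\in L^p(\nu)$ for all $k,p\ge 1$). The third term is at most $\norm{L[\beta_s]}_{L^4(\nu)}^2 \norm{\chi} = O(\norm{\beta_s}^2) = O(s^2)$ via the hypercontractivity estimate $\norm{L[\beta_s]}_{L^4(\nu)}\le d\norm{\beta_s}$ of Proposition~\ref{prop:L_h-hypercontractivity}.

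For the cubic remainder, using $L[\gamma_s],\chi\ge 0$ one has $\abs{\inp{r(L[\gamma_s])}{\chi}}\le C\inp{L[\gamma_s]^3}{\chi}$. Expanding $L[\gamma_s]^3 = \alpha_s^3 + 3\alpha_s^2 L[\beta_s] + 3\alpha_s L[\beta_s]^2 + L[\beta_s]^3$ and integrating against $\chi$, the pure $\alpha_s$ contribution equals $a_1(s)^3 \inp{\chi^3}{\chi} = O(s^{3/2})$. The three mixed terms each contain at least one factor of $L[\beta_s]$, and by Hölder's inequality combined with Proposition~\ref{prop:L_h-hypercontractivity} (in particular $\norm{L[\beta_s]}_{L^p(\nu)}\le d\norm{\beta_s} = O(s)$ for every $p\le d^2+1$, which suffices since $d\ge 2$), each of them is easily $o(s)$. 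Substituting everything back into \eqref{eq:as-bound} yields $s\inp{1}{\chi} + o(s) = \frac{d-1}{2d} a_1(s)^2 \inp{\chi^2}{\chi} + o(s)$, from which the lemma follows.

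The main obstacle is showing that the cubic remainder is of strictly lower order than $s$: a naive bound $\inp{L[\gamma_s]^3}{\chi}\le d\inp{L[\gamma_s]^2}{\chi}$ based on $L[\gamma_s]\le d$ gives only $O(s)$, the same order as the main term. The resolution leans on the strict separation $\norm{\beta_s} = O(s)\ll\norm{\alpha_s} = O(\sqrt{s})$ from Lemmas~\ref{lem:as-bound} and~\ref{lem:bz-bound}, together with the hypercontractivity of $L$, which allows $L^2$-norms of $\beta_s$ to be traded for higher $L^p$-norms of $L[\beta_s]$ at no extra cost, so that every occurrence of $L[\beta_s]$ in the expansion carries enough extra smallness to push the relevant terms below the $o(s)$ threshold.
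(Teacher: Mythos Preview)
Your proof is correct and follows essentially the same strategy as the paper's: expand $f(L[\gamma_s])$ using the decomposition $L[\gamma_s]=\alpha_s+L[\beta_s]$, isolate the leading quadratic contribution $\tfrac{d-1}{2d}a_1(s)^2\inp{\chi^2}{\chi}$, and control all remaining terms at order $o(s)$ via Lemmas~\ref{lem:as-bound}, \ref{lem:bz-bound} and the hypercontractivity of $L$. The only organisational difference is that the paper first establishes the $L^2(\nu)$-estimate \eqref{eq:as_limit_1} and then projects onto $\chi$, whereas you project onto $\chi$ from the outset via \eqref{eq:as-bound}; your route is marginally more direct since only the scalar statement is needed for the lemma. (Note also that for $d=2$ the remainder $r$ vanishes identically, so the cubic analysis is only required for $d\ge 3$, where $d^2+1\ge 10$ gives ample room in the hypercontractivity bound.)
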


\begin{proof}
  We start by proving the estimate
  \begin{equation}
    \label{eq:as_limit_1}
    \norm[\bigg]{(1-e^{-s}) 1_{[h^*, \infty)}-\gamma_s+L[\gamma_s] -
      \frac{1}{d^2}\binom{d}{2} L[\alpha_s]^2} \leq c s^{3/2}
  \end{equation}
  holding for some constant $c<\infty$ and all $s$ small enough:
  Rearranging \eqref{eq:base_equation}
  and subtracting $\frac{1}{d^2} \binom{d}{2} L[\alpha_s]^2$ on both
  sides shows that, on $[h^*,\infty)$,
  \begin{multline}
    (1-e^{-s}) - (\gamma_s - L[\gamma_s]) - \frac{1}{d^2}\binom{d}{2} L[\alpha_s]^2 \\
    =  (1-e^{-s})\big(L[\gamma_s] - f(L[\gamma_s])\big)
    + f(L[\gamma_s])- \frac{1}{d^2}\binom{d}{2} L[\alpha_s]^2.
  \end{multline}
  After taking norms, using again that $s \sim (1-e^{-s})$ as
  $s \downarrow 0$, this implies that
  \begin{multline}
    \label{eq:as_limit_2}
    \norm[\bigg]{(1-e^{-s})1_{[h^*, \infty)} -
      (\gamma_s - L[\gamma_s]) - \frac{1}{d^2}\binom{d}{2} L[\alpha_s]^2} \\
    \leq  c s  \norm{L[\gamma_s]} + c s\norm{f(L[\gamma_s])}
    +  \norm[\bigg]{f(L[\gamma_s])- \frac{1}{d^2}\binom{d}{2} L[\alpha_s]^2}
  \end{multline}
  for some constant $c$ and $s$ small enough.  By Lemmas~\ref{lem:as-bound} and
  \ref{lem:bz-bound},
  $\norm{L[\gamma_s]} \leq  \norm{\gamma_s}
  \leq \norm{\alpha_s} + \norm{\beta_s} \leq c s^{1/2}$. Further, by
  Lemma~\ref{lem:f-bound}--\ref{lem:bz-bound} and \eqref{eq:bz-b},
  $\norm{f(L[\gamma_s])} \leq \norm{cL[\gamma_s]^2}
  \le c\norm{\alpha_s}^2 + c \norm{\beta_s}^2 \leq c s$. Hence, to show
  \eqref{eq:as_limit_1}, it remains to bound the
  last summand in \eqref{eq:as_limit_2} by $c s^{3/2}$.  By the definition
  \eqref{eq:f_definition} of $f$,
  \begin{equation}
    f(L[\gamma_s]) - \frac{1}{d^2}\binom{d}{2} L[\alpha_s]^2 =
    2\frac{1}{d^2}\binom{d}{2} L[\alpha_s] L[\beta_s] + \frac{1}{d^2}\binom{d}{2}
    L[\beta_s]^2 + \sum_{k=3}^{d}c_k L[\gamma_s]^k
  \end{equation}
  for some $c_k\in (0,\infty)$. Hence,  after taking
  the norm,
  \begin{equation}
    \label{eq:normfbound}
    \norm[\bigg]{f(L[\gamma_s]) - \frac{1}{d^2}\binom{d}{2} L[\alpha_s]^2}
    \leq  c \norm{L[\alpha_s] L[\beta_s]} + c \norm{L[\beta_s]^2} + c \sum_{k=3}^{d} \norm{L[\gamma_s]^k},
  \end{equation}
  for some constant $c > 0$. By the Cauchy-Schwarz inequality,
  Proposition~\ref{prop:L_h-hypercontractivity} and
  Lemmas~\ref{lem:as-bound}, \ref{lem:bz-bound},
  \begin{equation}
    \begin{split}
      \norm{L[\alpha_s]
        L[\beta_s]}^2 &= \inp{L[\alpha_s]L[\beta_s]}{L[\alpha_s]L[\beta_s]} =
      \inp{L[\alpha_s]^2}{L[\beta_s]^2}
      \\& \leq
      \norm{L[\alpha_s]^2}\norm{L[\beta_s]^2}
      \le  c\norm{\alpha_s}^2 \norm{\beta_s}^2 \le c s^{3/2},
      \\
      \norm{L[\beta_s]^2} &\leq c \norm{\beta_s}^2 \le cs^2,
    \end{split}
  \end{equation}
  and for $3\le k \le d$, by the same arguments,
  \begin{equation}
    \norm{L[\gamma_s]^k}
    \leq c \norm{\gamma_s}^k \le c(\norm{\alpha_s}+\norm{\beta_s})^k
    \leq c 2^{k-1} (\norm{\alpha_s}^k + \norm{\beta_s}^k) \le c s^{3/2}.
  \end{equation}
  This proves that the third summand on the right-hand side of
  \eqref{eq:as_limit_2} is bounded by $cs^{3/2}$ and thus completes the
  proof of \eqref{eq:as_limit_1}.

  We can now show the lemma.
  From \eqref{eq:as_limit_1}, using
  $\lim_{s\downarrow 0}s^{-1}(1-e^{-s}) = 1$, it
  easily follows that
  \begin{equation}
    \lim_{s \downarrow 0} \bigg(\frac{1}{s}(\gamma_s-L[\gamma_s])
      + \frac{1}{d^2}\binom{d}{2}\frac{L[\alpha_s]^2}{s}
    \bigg)
      = 1_{[h^*,\infty)}
    \quad \text{in $L^2(\nu)$}.
  \end{equation}
  Since  $\chi = 0$ on
  $(-\infty, h)$ and $L[\alpha_s] = \alpha_s= a_1(s) \chi$, this implies that
  \begin{equation}
    \begin{split}
    \inp{1}{\chi} &= \inp[\bigg]{\lim_{s \downarrow 0} \bigg(\frac{1}{s}(\gamma_s-L[\gamma_s]) +
        \frac{1}{d^2}\binom{d}{2}\frac{\alpha_s^2}{s}\bigg)}{\chi}
    \\&= \lim_{s\downarrow 0} \bigg( \inp[\Big]{\frac{1}{s}(\gamma_s
            -L[\gamma_s])}{\chi}+
      \frac{1}{d^2}\inp[\Big]{\binom{d}{2} \frac{\alpha_s^2}{s}}{\chi}\bigg)
    \\&= \frac{d-1}{2d} \inp{\chi^2}{\chi} \lim_{s\downarrow 0}  \frac{a_1(s)^2}{s},
    \end{split}
  \end{equation}
  where in the last equality we used
  $\inp{\gamma_s -L[\gamma_s]}{\chi}= 0$, by
  \eqref{eq:inp_gamma_L_gamma_is_zero}. The claim of the lemma then follows.
\end{proof}

We now have all ingredients to give the proof of
Proposition~\ref{prop:L-convergence}, directly followed by the proof of
Theorem~\ref{thm:cluster_size_tail}.

\begin{proof}[Proof of Proposition~\ref{prop:L-convergence}]
  By the Lemmas \ref{lem:bz-bound} and \ref{lem:as_limit},
  \begin{equation}
    \label{eqn:aaaa}
    \lim_{s\downarrow 0}\frac{\gamma_s}{\sqrt{s}} =
    \lim_{s\downarrow 0}\Big(\frac{a_1(s) \chi}{\sqrt{s}} +
      \frac{\beta_s}{\sqrt{s}}\Big) =
    C_1 \Gamma(1/2) \chi \qquad \text{in } L^2(\nu).
  \end{equation}
  By the stochastic domination~\eqref{eq:stoch_dom}, the function
  $a\mapsto\gamma_s(a)$ is
  increasing for any $s>0$, and by Proposition~\ref{prop:L_h-chi-connection}, the
  limit function $C_1 \Gamma(1/2) \chi$ is continuous on
  $[h^*, \infty)$. This implies that the convergence in \eqref{eqn:aaaa}
  is pointwise as well.
\end{proof}

\begin{proof}[Proof of Theorem~\ref{thm:cluster_size_tail}]
  Claim \eqref{eq:thm_cst_1} follows directly from Propositions~\ref{prop:tauberian}
  and~\ref{prop:L-convergence}.

  To prove
  \eqref{eq:thm_cst_2}, let $\tilde{\mathcal L}_a$ be the Laplace transform of $\abs{\mathcal{C}_o^{h^*}}$
  under $P_a$,
  \begin{equation}
    \tilde{\mathcal L}_{a}(s) \defeq E_a\big[e^{-s
        \abs{\mathcal{C}_o^{h^*}}}\big],\qquad a \in \R, s \geq 0.
  \end{equation}
  Using the same arguments as in the proof of the
  recursion property \eqref{eq:self-reference}, it follows that
  \begin{equation}
    \tilde{\mathcal L}_a(s) = e^{-s} \big(E_Y[\mathcal L_{\frac{a}{d} + Y}
        (s)]\big)^{d+1},
    \qquad \text{for } s > 0, a \geq h^*,
  \end{equation}
  which together with \eqref{eq:self-reference} yields
  \begin{equation}
    \label{eq:1-La_Tbar_equation}
     \tilde{\mathcal L}_a(s) = e^{s/d} \mathcal L_a(s)^{(d+1)/d} \qquad \text{for
    }s>0, a\geq h^*.
  \end{equation}
  Using Proposition~\ref{prop:L-convergence}, this implies that, as
  $s\downarrow 0$,
  \begin{equation}
    \begin{split}
      \tilde{\mathcal L}_a(s)
      &= \Big(1-\frac sd+o(s)\Big)
      \Big(1-C_1 \Gamma (1/2) \chi (a) s^{1/2} + o(s^{1/2})\Big)^{(d+1)/d}
      \\&= 1- \frac{d+1}{d}\, C_1\Gamma(1/2) \chi (a)s^{1/2} +o(s^{1/2}).
    \end{split}
  \end{equation}
  Claim~\eqref{eq:thm_cst_2} then follows by another application of
  Proposition~\ref{prop:tauberian}.
\end{proof}

\section{Behaviour of the connectivity for near-critical level set percolation}
\label{sec:proof_theorem3}

In this section we prove Theorem \ref{thm:percolation_prob} which
describes the asymptotic behaviour of the percolation probabilities
$\eta(h,a)$ and $\eta^+(h,a)$ for fixed $a\in\R$ as $h$ approaches $h^*$
from below.

The proof is based on a careful analysis of the functional
equation for $\eta^+$ that was proved in \cite{AbaCer19} and that we
recall in the next proposition.

\begin{proposition}[\cite{AbaCer19} Theorem 4.1]
  For every $h\in \R$, the forward percolation probability
  $\eta_h^+ \defeq \eta^+(h, \cdot)$ solves the
  functional equation
  \begin{equation}
    \label{eq:percolation-prob}
    f(a) = 1_{[ h,\infty)}(a)\Big( 1 -
      \big(1- d^{-1} L_h[f]\big)^d\Big),
    \quad a \in \R.
  \end{equation}
  In addition, the only two solutions of
  \eqref{eq:percolation-prob} in the set
  \begin{equation}
    \mathcal{S}_h \defeq \set{f \in L^2(\nu) : 0 \leq f \leq 1
      \text{ and } f = 0 \text { on } (-\infty, h)}
  \end{equation}
  are the constant function $f=0$ and $\eta^+_h$. For $h\ge h^*$ these
  two solutions coincide and for $h<h^*$ they are distinct.
\end{proposition}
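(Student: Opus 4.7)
The plan is to split the statement into (i) existence (the probability $\eta^+_h$ solves the equation) and (ii) the dichotomy of solutions in $\mathcal{S}_h$. Throughout I denote by $T:\mathcal{S}_h\to\mathcal{S}_h$ the right-hand side of \eqref{eq:percolation-prob}, i.e.\ $T(f)\defeq 1_{[h,\infty)}\bigl(1-(1-d^{-1}L_h f)^d\bigr)$. First, to see that $\eta^+_h$ is a fixed point of $T$, I would use the branching-process representation \eqref{eq:BP-representation}. When $a<h$, conditioning on $\phi_o=a$ forces $o\notin E^h_\phi$, hence $\mathcal{C}_o^h\cap\T^+=\emptyset$ and $\eta^+(h,a)=0$, matching the indicator factor. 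When $a\ge h$, the $d$ subtrees rooted at the children of $o$ are conditionally independent copies of the structure, and the child field values are i.i.d.\ $\mathcal{N}(a/d,\sigma_Y^2)$. This yields $1-\eta^+(h,a)=\bigl(E_Y[1-\eta^+(h,a/d+Y)]\bigr)^d$; rewriting $E_Y[\eta^+_h(a/d+Y)]=d^{-1}L_h[\eta^+_h](a)$ (using $\eta^+_h=0$ on $(-\infty,h)$) gives the functional equation.

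Next I would establish that $\eta^+_h$ dominates every solution in $\mathcal{S}_h$. Since $L_h$ is a positive operator and $x\mapsto(1-x/d)^d$ is decreasing on $[0,d]$, the map $T$ is monotone on $\mathcal{S}_h$. Set $p_n\defeq T^n(1_{[h,\infty)})$; a short inductive argument using the branching recursion identifies $p_n(a)=P_a[|Z_n^h|\ge 1]$. Because an empty generation has no descendants, $\{|Z_n^h|\ge 1\}$ is decreasing, so $p_n\downarrow \eta^+_h$ pointwise. For any fixed point $f\in\mathcal{S}_h$ we have $f\le 1_{[h,\infty)}=p_0$, so iterating $f=T^nf\le T^np_0=p_n$ gives $f\le\eta^+_h$. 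In the regime $h\ge h^*$, Theorem~\ref{thm:crit_BP_dies} combined with monotonicity in $h$ gives $\eta^+_h=0$, so $f=0$ is the only fixed point and the two stated solutions coincide, as claimed.

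The remaining case is $h<h^*$, where $\eta^+_h\not\equiv 0$ is known from \cite{Szn15}. I would show that every nonzero fixed point $f$ coincides with $\eta^+_h$ via the product martingale
\[
W_n^f\defeq\prod_{x\in Z_n^h}(1-f(\phi_x)).
\]
The fixed-point equation is precisely what makes $(W_n^f)$ an $(\mathcal{F}_n)$-martingale under $P_a$: for $x\in Z_n^h$, the conditional expectation over the $d$ children factors as $(1-d^{-1}L_h[f](\phi_x))^d=1-f(\phi_x)$. Being bounded, $W_n^f$ converges a.s.\ to some $W_\infty^f$ with $E_a[W_\infty^f]=1-f(a)$. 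On extinction $W_n^f=1$ eventually, while on survival I would argue $W_n^f\to 0$; taking expectations then forces $1-f(a)=1-\eta^+_h(a)$. The main obstacle is verifying $W_n^f\to 0$ on survival. Here I would first use the Gaussian convolution in the definition of $L_h$ together with the fixed-point equation to propagate positivity of $f$: if $f$ is strictly positive at some point of $[h,\infty)$, then after one application of $L_h$ (whose kernel has full support) the function $f$ is strictly positive on all of $[h,\infty)$. Combining this with the a.s.\ growth $|Z_n^h|\to\infty$ on survival (controllable through the martingale $M_n^h$ of \eqref{def:M_n} and the hypercontractivity estimates of Proposition~\ref{prop:L_h-hypercontractivity}) and a local lower bound $f\ge\varepsilon$ on some compact set of positive $\nu$-measure visited infinitely often by surviving particles, one obtains $W_n^f\le(1-\varepsilon)^{k_n}\to 0$ with $k_n\to\infty$.
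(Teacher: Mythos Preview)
The paper does not give its own proof of this proposition; it is quoted from \cite{AbaCer19}, Theorem~4.1, and used as input for Section~\ref{sec:proof_theorem3}. So there is no in-paper argument to compare your proposal against. On its own merits: your derivation of the functional equation from the branching representation \eqref{eq:BP-representation} is correct, and the monotone-iteration argument showing that every fixed point in $\mathcal S_h$ lies below $\eta^+_h$ is standard and sound. Invoking Theorem~\ref{thm:crit_BP_dies} for $h=h^*$ creates no circularity within this paper (Section~\ref{sec:proof_theorem1} does not use the proposition), although the original argument in \cite{AbaCer19} could of course not have relied on it.

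The substantive gap is in the uniqueness step for $h<h^*$. The product martingale $W_n^f$ is indeed a bounded $(\mathcal F_n)$-martingale, and the positivity propagation (a nonzero fixed point is strictly positive on all of $[h,\infty)$) is correct. But your argument that $W_n^f\to 0$ on survival is incomplete in two places. First, the martingale $M_n^h$ together with the lower bound $\chi_h\ge c>0$ on $[h,\infty)$ yields only $|Z_n^h|\le c^{-1}\lambda_h^{\,n} M_n^h$, an \emph{upper} bound on the generation size; deducing $|Z_n^h|\to\infty$ on survival requires a separate argument, and the reasoning of Lemma~\ref{lem:dead_bounded} does not transfer directly because for $h<h^*$ the maximum $\Phi_n$ is no longer a.s.\ bounded. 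Second, even granting $|Z_n^h|\to\infty$, pointwise positivity of $f$ on $[h,\infty)$ is not enough to force $\prod_{x\in Z_n^h}(1-f(\phi_x))\to 0$: if $f(\phi_x)\to 0$ along the types actually carried by the surviving particles, the product need not vanish. Your ``compact set of positive $\nu$-measure visited infinitely often'' is exactly the missing ingredient, and nothing in the sketch supplies it. A workable route is to first establish a uniform lower bound $\inf_{a\ge h}f(a)>0$ directly from the fixed-point equation (this is where the real work lies in the unbounded-type setting), or alternatively to bypass the probabilistic step entirely via a concavity comparison between $f$ and $\eta^+_h$.
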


The last claim of this proposition together with the continuity of the
percolation functions (cf.~Corollary~\ref{cor:continuity}) implies that
the solution set to \eqref{eq:percolation-prob} has a bifurcation at the
critical point $h^*$. Therefore, in order to describe the behaviour of
$\eta^+_h$ as $h\uparrow h_*$, we will analyse the solution set around
this bifurcation.

Our main tool will be the theorem on transcritical bifurcations on
general Banach spaces, stated as Proposition~\ref{prop:bifurcation1}
below. To introduce this theorem we need more notation. For Banach spaces
$X, Y$, let $B(X, Y)$ be the space of bounded linear operators from $X$
to $Y$. For a function $F: X \to Y$,  we use $DF: X \to B(X, Y)$ to
denote its Fréchet derivative and $DF(x): X \to Y$ its Fréchet derivative
evaluated at point $x \in X$. If $T$ is an open interval in $\mathbb R$
and $G: T \times X \to Y$, then we use $D_x G(t,x): X\to Y$ and
$D_t G(t, x)\in Y$ to denote the partial Fréchet derivative
in the $x$ and $t$ direction, evaluated at point $(t, x)$. Similarly,
$D_{xx}G$ or $D_{xt}G$ denote the respective second partial Fréchet
derivatives. Finally, we use $N(F)$ and $R(F)$ to denote the kernel and
the range of a linear functional $F$.

\begin{proposition}[\cite{CRANDALL}, Theorems 1.7 and 1.18]
  \label{prop:bifurcation1}
  Let $X$, $Y$ be Banach spaces, $V$ a neighbourhood of $0$ in $X$,
  $I = (t_0-1, t_0+1)$ for some $t_0 \in \R$. Assume that
  a (non-linear) functional $F:I \times V \rightarrow Y$ satisfies:
  \begin{enumerate}[label=(\alph*)]
    \item $F(t, 0) = 0$ for $t \in I$,
    \item The partial derivatives $D_t F$, $D_x F$ and $D_{tx}F$ exist and are continuous,
    \item $N(D_xF(t_0,0)) = \spn{\set{x_0}}$ for a $x_0 \in X$, and
    $\codim R(D_xF(t_0,0)) = 1$,
    \item $D_{tx} F(t_0, 0)(x_0) \not\in R(D_xF(t_0,0))$, where $x_0$ is given in (c).
  \end{enumerate}
  Then for any complement $Z$ of $x_0$ in $X$ (i.e.,~for any subspace $Z$ of
    $X$ with $Z \oplus \spn\set{x_0} = X$) there is a neighbourhood $U$ of
  $(t_0,0)$ in $\R \times X$, an interval $(-a, a)$, and continuous functions
  $\phi:(-a,a) \to I$, $\psi: (-a,a) \to Z$ such that $\phi(0) = t_0$, $\psi(0) =
  0$ and
  \begin{equation}
    F^{-1}(0) \cap U = \set{(\phi(\alpha), \alpha x_0  + \alpha \psi(\alpha)) : \abs{\alpha} < a}
    \cup \set{(t, 0) : (t, 0) \in U}.
  \end{equation}
  If, in addition to (a)--(d), $D_{xx} F$ is continuous, then the functions $\phi$ and $\psi$ have a
  continuous derivative with respect to $\alpha$ and
  \begin{equation}
    \label{eq:bifurcation2}
    \frac{1}{2} D_{xx}F(t_0,0)(x_0, x_0) + D_xF(t_0,0)(\psi'(0)) +
    \phi'(0)D_{tx}F(t_0,0)(x_0) = 0.
  \end{equation}
\end{proposition}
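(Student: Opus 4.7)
The plan is a Lyapunov--Schmidt reduction followed by the classical implicit function theorem on Banach spaces. Fix a complement $Z$ of $\spn\set{x_0}$ in $X$, so that $X = \spn\set{x_0} \oplus Z$, and parametrise candidate non-trivial zeros as $x = \alpha(x_0 + z)$ with $\alpha \in \R$ and $z \in Z$. Since $F(t, 0) = 0$ by hypothesis (a), the fundamental theorem of calculus yields
\begin{equation*}
F\big(t, \alpha(x_0 + z)\big) = \alpha\, G(t, \alpha, z), \qquad
G(t, \alpha, z) \defeq \int_0^1 D_xF\big(t, s\alpha(x_0+z)\big)(x_0+z)\,\D s.
\end{equation*}
The zero set of $F$ near $(t_0, 0)$ thus splits into the trivial branch $\set{\alpha = 0}$ and the branch $\set{G = 0}$; by (c), $G(t_0, 0, 0) = D_xF(t_0, 0)(x_0) = 0$, so the latter passes through the bifurcation point.

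The heart of the argument is to apply the implicit function theorem to $G$ at $(t_0, 0, 0)$ to solve for $(t, z) \in I \times Z$ as a function of $\alpha$. The required linearisation
\begin{equation*}
\Lambda \defeq D_{(t, z)}G(t_0, 0, 0) \colon \R \oplus Z \to Y, \quad
(s, w) \mapsto s\, D_{tx}F(t_0, 0)(x_0) + D_xF(t_0, 0)(w),
\end{equation*}
must be a topological isomorphism. Injectivity: if $\Lambda(s, w) = 0$, then $s\, D_{tx}F(t_0, 0)(x_0) = -D_xF(t_0, 0)(w) \in R(D_xF(t_0, 0))$, so (d) forces $s = 0$, after which (c) gives $w \in N(D_xF(t_0, 0)) \cap Z = \spn\set{x_0} \cap Z = \set{0}$. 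Surjectivity: by (c), $Y = R(D_xF(t_0, 0)) \oplus \spn\set{y_0}$ for any $y_0 \notin R(D_xF(t_0, 0))$, and (d) allows the choice $y_0 = D_{tx}F(t_0, 0)(x_0)$; since the restriction of $D_xF(t_0, 0)$ to $Z$ is a bijection onto $R(D_xF(t_0, 0))$, $\Lambda$ covers $Y$. The open mapping theorem then promotes algebraic bijectivity to a topological isomorphism. The implicit function theorem produces continuous $\phi \colon (-a, a) \to I$ and $\psi \colon (-a, a) \to Z$ with $\phi(0) = t_0$, $\psi(0) = 0$, and $G(\phi(\alpha), \alpha, \psi(\alpha)) = 0$; feeding these back through $F = \alpha G$ yields the claimed local description of $F^{-1}(0) \cap U$.

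For the refinement \eqref{eq:bifurcation2}, continuity of $D_{xx}F$ upgrades $G$ to a jointly $C^1$ map via differentiation under the integral, so $\phi$ and $\psi$ are in turn $C^1$. Differentiating $G(\phi(\alpha), \alpha, \psi(\alpha)) \equiv 0$ at $\alpha = 0$ and using $D_tG(t_0, 0, 0) = D_{tx}F(t_0, 0)(x_0)$, $D_zG(t_0, 0, 0) = D_xF(t_0, 0)$, together with
\begin{equation*}
D_\alpha G(t_0, 0, 0) = \int_0^1 s\, D_{xx}F(t_0, 0)(x_0, x_0)\,\D s = \tfrac12 D_{xx}F(t_0, 0)(x_0, x_0),
\end{equation*}
yields exactly \eqref{eq:bifurcation2}.

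The main obstacle is the isomorphism check for $\Lambda$: this is the single point where (c) and (d) couple, with (c) supplying the codimension-one decomposition of $Y$ and (d) identifying the missing complementary direction as the one hit by the parameter $t$, so that a bifurcation is possible at all. A subsidiary technical point is the regularity of $G$ needed to launch the implicit function theorem under hypothesis (b) alone: continuity of $D_tG$ follows from continuity of $D_{tx}F$ and dominated convergence, while at the basepoint the $z$-partial derivative reduces to $D_xF(t_0, 0)$ directly, which is enough to run the theorem.
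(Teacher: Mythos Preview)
The paper does not prove this proposition: it is quoted verbatim from Crandall--Rabinowitz \cite{CRANDALL} and used as a black box, so there is no ``paper's own proof'' to compare against. Your proposal is exactly the standard argument from that reference---factor out the trivial branch via $F(t,\alpha(x_0+z)) = \alpha\,G(t,\alpha,z)$, then apply the implicit function theorem to $G$ at $(t_0,0,0)$, with the isomorphism check for $\Lambda$ being precisely where hypotheses (c) and (d) are spent---and it is correct.

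One small clarification on the regularity point you flag at the end: you do not actually need $D_{xx}F$ to compute $D_zG$ away from $\alpha=0$. For $\alpha\neq 0$ one has $G(t,\alpha,z)=\alpha^{-1}F(t,\alpha(x_0+z))$ directly, whence $D_zG(t,\alpha,z)(w)=D_xF(t,\alpha(x_0+z))(w)$, and this formula matches the $\alpha=0$ case and is continuous in all variables by (b). So $D_{(t,z)}G$ is continuous under (b) alone, and a version of the implicit function theorem requiring only continuity of $G$ and of the partial derivative in the variables being solved for suffices; the extra hypothesis on $D_{xx}F$ is needed only for the $C^1$ upgrade and formula \eqref{eq:bifurcation2}, exactly as you say.
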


One of the main difficulties in applying this proposition to our
situation is to choose  suitable spaces $X$ and $Y$ where its conditions
can be verified. We start by shifting the functions $\eta_h^{+}$ so that
they have a common zero set. To this end, let $\theta_a$ be the usual
shift operator acting on $f:\mathbb R\to \mathbb R$ by
$\theta_a f(x) = f(x+a)$, and define
$\widetilde{\eta}_h \defeq \theta_h\eta_h^+$. Note that, for $h<h^*$,
$\widetilde \eta_h(a) >0$ iff $a\in [0,\infty)$. For $h\in \mathbb R$,
let $H_h$ be an operator defined by
\begin{equation}
  \label{eq:HLrelation}
  H_h[f] =d^{-1} \theta_h  L_h[\theta^{-1}_h f].
\end{equation}
Using the definition~\eqref{eq:L_h} of $L_h$, after an easy computation,
this operator can be written more explicitly:
\begin{equation}
  \label{eq:Hexpl}
  H_h[f](a) =
  \begin{cases}
    \int_0^\infty f(x)\rho_Y\big(x - \frac{a}{d} +
      \frac{d-1}{d}\, h\big) \D x,\quad &\text{when $a\ge 0$},
    \\ 0,&\text{otherwise,}
  \end{cases}
\end{equation}
where $\rho_Y$ denotes the centred Gaussian density with variance
$\sigma_Y^2$.
This notation allows to rewrite equation
\eqref{eq:percolation-prob} in terms of $\widetilde{\eta}_h$ as
\begin{equation}
  \label{eq:shifted_percolation_eq}
  0 = - \widetilde{\eta}_h(a) + 1_{[0, \infty)}(a)\Big(1-
  \big(1 - H_h[\widetilde{\eta}_h](a)\big)^d\Big), \qquad  a \in \R.
\end{equation}
Finally, let $\nu^*$ be a Gaussian measure obtained from the Gaussian
measure $\nu $ (see above \eqref{eq:L_h}) by shifting it by $h^*$, that is
the corresponding densities satisfy
$\rho_{\nu^*}  = \theta_{h^*} \rho_\nu $.

In view of \eqref{eq:shifted_percolation_eq}, to prove Theorem~\ref{thm:percolation_prob}, we
will show that Proposition~\ref{prop:bifurcation1} is applicable to
\begin{equation}
  \label{eq:F}
  F(h, f) = -f + 1_{[0, \infty)} \Big(1 -  \big( 1 - H_h[f]\big)^d\Big),
\end{equation}
viewed as a map from $I \times L^2(\nu^*)$ to $L^2(\nu^*)$,
with $I = (h^*-1,h^*+1)$.
Showing the applicability of Proposition \ref{prop:bifurcation1} is
divided into multiple steps.  First, we prove that $F$ is indeed a map
from $I \times L^2(\nu^*)$ to $L^2(\nu^*)$. Then we
compute the necessary partial Fréchet
derivatives, and finally we verify the remaining
assumptions of the proposition. Before starting with this programme,
we state a simple estimate that will later be useful several times.

\begin{lemma}
  \label{lem:phi_dominant}
  Let $\rho$ be any centred Gaussian density.  Then there exist
  constants $r,c\in (0,\infty)$ such that for all $x \in \R$ and $\abs s\in (-1,1)
  \setminus \set{0}$,
  \begin{equation}
    \abs[\bigg]{\frac{\rho(x + s) - \rho(x)}{s}}
    \leq c \big(\rho(x+r) + \rho(x-r)\big).
  \end{equation}
\end{lemma}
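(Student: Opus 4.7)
The plan is to apply the mean value theorem and then exploit the explicit form of the derivative of a Gaussian density. Recall that for a centred Gaussian density $\rho$ with variance $\sigma^2$ one has $\rho'(x) = -\sigma^{-2}x\rho(x)$. By the mean value theorem, for each $x\in\R$ and $s\in(-1,1)\setminus\set{0}$ there exists $\xi = \xi(x,s)$ lying between $x$ and $x+s$ such that
\begin{equation*}
  \abs[\bigg]{\frac{\rho(x+s)-\rho(x)}{s}} = \abs{\rho'(\xi)} = \sigma^{-2}\abs{\xi}\rho(\xi),
\end{equation*}
and crucially $\abs{\xi - x}<1$. The lemma therefore reduces to finding constants $r,c\in(0,\infty)$ such that $\abs{\xi}\rho(\xi) \leq c\big(\rho(x+r)+\rho(x-r)\big)$ uniformly in $x\in\R$ and in $\xi$ with $\abs{\xi - x}<1$.

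To establish this inequality I would split into two regimes according to $\abs{x}$. In the bounded regime $\abs{x}\leq 3$, the left-hand side is bounded by the universal constant $4\rho(0)$ (since $\abs{\xi}\leq 4$ and $\rho\leq\rho(0)$), whereas the right-hand side is bounded below by the positive constant $2\rho(3+r)$ (using symmetry of $\rho$ and monotonicity on $[0,\infty)$), so the inequality is immediate for any fixed $r>0$. In the tail regime $\abs{x}>3$, by symmetry I may assume $x>3$, so that $\xi\geq x-1>2>0$; since $\rho$ is strictly decreasing on $[0,\infty)$ and $\abs{\xi}\leq x+1$, this yields $\abs{\xi}\rho(\xi) \leq (x+1)\rho(x-1)$.

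The key step is then to absorb the polynomial prefactor $(x+1)$ into a slightly larger shift of the Gaussian. A direct computation gives
\begin{equation*}
  \frac{(x+1)\rho(x-1)}{\rho(x-r)} = (x+1)\exp\Big(\frac{(1-r)(2x-1-r)}{2\sigma^2}\Big),
\end{equation*}
which for any choice $r>1$ decays exponentially to zero as $x\to\infty$ and is consequently bounded uniformly for $x>3$. Taking for instance $r=2$, and combining with the symmetric bound for $x<-3$ (which yields $\abs{\xi}\rho(\xi)\leq c\rho(x+r)$), one obtains the claim.

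The main (and essentially only) obstacle is the polynomial factor $\abs{\xi}$ arising from the Gaussian derivative, which cannot be absorbed into $\rho(x)$ itself without loss; the trick is that translating the argument by any $r>1$ produces an exponential gain that dominates any polynomial growth, so a fixed shift is sufficient.
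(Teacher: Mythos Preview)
Your proof is correct and follows essentially the same approach as the paper: bound the difference quotient by a derivative evaluated at a nearby point, then absorb the polynomial prefactor coming from the Gaussian derivative into a shifted Gaussian, handling bounded and unbounded $x$ separately. The only minor difference is that you use the mean value theorem to get $|\rho'(\xi)|$ directly, whereas the paper applies Taylor's theorem to second order and bounds by $|\rho'(x)| + |\rho''(\xi_{x,s})|$; your version is slightly cleaner since it avoids the second derivative altogether.
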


\begin{proof}
  By Taylor's theorem, $\rho(x+s) = \rho(x) + \rho'(x)s +
  \frac{1}{2}\rho''(\xi_{x,s})s^2$, for some $\xi_{x,s}$ between $x$ and $x+s$.
  Therefore, for $\abs s\in(0,1)$,
  \begin{equation}
    \label{eq:phi_dominant_1}
    \abs[\bigg]{\frac{\rho(x + s) - \rho(x)}{s}} \leq \abs{\rho'(x)} + \abs{\rho''(\xi_{x,s})}.
  \end{equation}
  Since $\rho'(x) = P_1(x)\rho (x)$ and $\rho''(x) = P_2(x)\rho(x)$ for
  some polynomials $P_1$, $P_2$, it follows easily that there is
  $x_0<\infty$ such that
  $\abs{\rho'(x)} + \abs{\rho''(\xi_{x,s})} \le (\rho (x+2)+ \rho(x-2))$
  for all $x\notin[-x_0,x_0]$ and
  $s\in [-1,1]$. Finally, since $\rho >0$ on $\mathbb R$, we can made the
  last inequality valid on whole $\mathbb R$ by multiplying the right-hand
  side by a sufficiently large constant $c$.
\end{proof}

We now show that $F$ maps $I\times L^2(\nu^*)$ to $L^2(\nu^*)$. To this
end it is enough
to prove that for $f\in L^2(\nu^*)$ and $h \in \R$,
$H_h[f] \in L^{2d}(\nu^*)$. The following lemma shows a little bit more,
as it will be needed in the later proofs, and also proves the continuity
of $h\mapsto H_h$.

\begin{lemma}
  \phantomsection
  \label{lem:image_of_H}
  \begin{enumerate}
    \item If $h,s \in \R$ and $f\in L^2(\nu^*)$, then
    \begin{equation}
      \label{eq:Hboundedoperator}
      \norm{\theta_s H_h[f]}_{L^{2d}(\nu^*)} \le C_{h,s}
      \norm{f}_{L^2(\nu^*)}.
    \end{equation}
    In particular, $\theta_s H_h[f] \in L^{2d}(\nu^*)$.

    \item The function $h\mapsto H_h$ from $I$ to
    $B(L^2(\nu^*), L^{2d}(\nu^*))$ is (strongly) continuous.
  \end{enumerate}
\end{lemma}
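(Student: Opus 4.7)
My plan for part~(a) is to work directly from the explicit expression \eqref{eq:Hexpl} for $H_h[f]$. For $a \geq 0$, Cauchy--Schwarz in $L^2(\nu^*)$ with respect to the density $\rho_{\nu^*}$ yields the pointwise bound
\begin{equation*}
  \abs{H_h[f](a)} \leq \norm{f}_{L^2(\nu^*)} \cdot g_h(a),
  \qquad g_h(a)^2 \defeq \int_0^\infty \frac{\rho_Y(x - a/d + (d-1)h/d)^2}{\rho_{\nu^*}(x)} \, \D x,
\end{equation*}
with $g_h(a) = 0$ for $a < 0$. Since $(\theta_s H_h[f])(a) = H_h[f](a+s)$, this gives
\begin{equation*}
  \norm{\theta_s H_h[f]}_{L^{2d}(\nu^*)}^{2d}
  \leq \norm{f}_{L^2(\nu^*)}^{2d} \int_\R g_h(a+s)^{2d} \rho_{\nu^*}(a) \, \D a,
\end{equation*}
so the claim reduces to showing that the last integral is finite for every $h,s$.

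Both $\rho_Y$ and $\rho_{\nu^*}$ are Gaussian densities, so $g_h(a)^2$ is bounded by a Gaussian integral in $x$. Completing the square shows that the integral converges and that $g_h(a)^2 \leq C(h) \exp(\alpha a^2 + \beta_h a + \gamma_h)$ with explicit constants expressible through $\sigma_\nu$, $\sigma_Y$, $h^*$, $h$. The quadratic form in $x$ under the exponential has negative leading coefficient precisely when $2\sigma_\nu^2 > \sigma_Y^2$, which, using $\sigma_\nu^2 = d/(d-1)$ and $\sigma_Y^2 = (d+1)/d$, reduces to $d^2 + 1 > 0$. Then $g_h(a+s)^{2d}\rho_{\nu^*}(a)$ is itself a Gaussian-type function of $a$, and its integrability reduces to a second quadratic inequality in $\sigma_\nu, \sigma_Y$ that boils down to $d > 1$. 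Both conditions hold for every $d\ge 2$, which establishes \eqref{eq:Hboundedoperator}.

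For part~(b), I will in fact prove the stronger statement that $h \mapsto H_h$ is continuous in operator norm on $B(L^2(\nu^*), L^{2d}(\nu^*))$. Writing $\Delta \defeq (d-1)(h'-h)/d$ and $u = x - a/d + (d-1)h/d$, the pointwise difference of the two kernels is $\rho_Y(u+\Delta) - \rho_Y(u)$, to which Lemma~\ref{lem:phi_dominant} applies: for $\abs\Delta$ small,
\begin{equation*}
  \abs{\rho_Y(u+\Delta) - \rho_Y(u)} \leq c\abs\Delta\big(\rho_Y(u+r) + \rho_Y(u-r)\big).
\end{equation*}
Inserting this into the integral representation of $H_{h'}[f](a) - H_h[f](a)$ and repeating the Cauchy--Schwarz step of part~(a) with the shifted kernels $\rho_Y(\cdot \pm r)$ in place of $\rho_Y$ gives
\begin{equation*}
  \norm{H_{h'}[f] - H_h[f]}_{L^{2d}(\nu^*)}
  \leq c \abs{h' - h} \cdot C'_h \cdot \norm{f}_{L^2(\nu^*)},
\end{equation*}
where $C'_h < \infty$ by the same Gaussian-integrability argument as in part~(a). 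This immediately yields the desired strong continuity.

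The main obstacle is simply the bookkeeping of the two Gaussian-integrability calculations in part~(a). The argument relies on the specific values of $\sigma_\nu^2$ and $\sigma_Y^2$ giving enough room for $\rho_Y^2$ to be integrable against $1/\rho_{\nu^*}$, and for the resulting bound on $g_h^{2d}$ to still be integrable against $\rho_{\nu^*}$; no delicate cancellations occur, only routine quadratic inequalities in $d$ that hold for every $d \geq 2$. Note that an alternative route through the hypercontractivity estimate of Proposition~\ref{prop:L_h-hypercontractivity} combined with the relation \eqref{eq:HLrelation} runs into trouble, because translating between $L^p(\nu)$ and $L^p(\nu^*)$ norms via $\theta_{\pm h}$ requires losing some integrability, which the hypothesis $f \in L^2(\nu^*)$ does not provide; the direct approach above avoids this issue because the indicator $1_{[0,\infty)}$ in \eqref{eq:Hexpl} cuts off the problematic region.
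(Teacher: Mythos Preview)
Your argument is correct. For part~(a), you take a genuinely different route from the paper: you apply Cauchy--Schwarz directly to the kernel representation~\eqref{eq:Hexpl} and then verify two explicit Gaussian-integrability conditions, which indeed reduce to $2\sigma_\nu^2>\sigma_Y^2$ (always true) and $2(d-1)\sigma_\nu^2>d\sigma_Y^2$ (equivalent to $d>1$). The paper instead rewrites $H_h[f]$ in terms of the operator $G[g](a)=\int_{h^*}^\infty g(x)\rho_Y(x-a/d)\,\D x$ acting on $g=\theta_{h^*}^{-1}f\in L^2(\nu)$, then uses H\"older with exponents $(5/4,5)$ to separate a hypercontractivity term $\norm{G[g]}_{L^{5d/2}(\nu)}$, bounded via Proposition~\ref{prop:L_h-hypercontractivity}, from a harmless density ratio. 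Your approach is more elementary and self-contained; the paper's is shorter once the hypercontractivity machinery is in place and makes the dependence on $d$ less explicit. For part~(b) the two proofs are essentially identical: both invoke Lemma~\ref{lem:phi_dominant} on the kernel difference and recycle the estimate from part~(a), obtaining the Lipschitz bound~\eqref{eq:Hhhprimenorm}.

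One inaccuracy worth flagging: your closing remark that the hypercontractivity route ``runs into trouble'' because passing between $L^p(\nu)$ and $L^p(\nu^*)$ loses integrability is not correct. The shift $\theta_{h^*}$ is an isometry from $L^p(\nu^*)$ to $L^p(\nu)$, so $g=\theta_{h^*}^{-1}f$ lies in $L^2(\nu)$ with equal norm; the mismatch caused by $h\neq h^*$ and by the extra shift $\theta_s$ is precisely what the paper absorbs into the density-ratio factor via H\"older. This does not affect the validity of your own proof, but the alternative route is in fact the one the paper takes.
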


\begin{proof}
  (a) For $f\in L^2(\nu^*)$, let $g = \theta_{h^*}^{-1} f \in L^2(\nu)$.  Then,
  for $a\ge 0$, by \eqref{eq:Hexpl},
  \begin{equation}
    \label{eq:H_bound}
    \begin{split}
      H_h[f](a)
      &= \int_0^\infty f(x)\rho_Y
      \Big(x - \frac{a}{d} + \frac{d-1}{d}\, h\Big) \D x \\
      &= \int_{h^*}^\infty g(x)
      \rho_Y\Big(x - \frac{a + d h^* - (d-1)h}{d}\Big) \D x \\
      &= G[g]\big(a+dh^* - (d-1)h\big),
    \end{split}
  \end{equation}
  where we defined
  $G[g](a) \defeq \int_{h^*}^\infty g(x) \rho_Y(x-\frac ad) \D x$.
  Note that $G$ is strongly related to $L_{h^*}$ (see~\eqref{eq:L_h}) which suggests
  that we eventually should apply
  Proposition~\ref{prop:L_h-hypercontractivity}. We therefore write
  \begin{equation}
    \begin{split}
      \norm[\big]{ \theta_s H_h[f]}_{L^{2d}(\nu^*)}^{2d}
      &\le \int_{\R} \big(G[g](a + s + d h^* - (d-1) h)\big)^{2d}\rho_{\nu}(a + h^*)
      \D a \\
      &= \int_{\R} \big(G[g](a)\big)^{2d}\rho_{\nu}\big(a - s - (d-1)h^* +
        (d-1)h\big)
      \D a \\
      &= \int_{\R} \big(G[g](a)\big)^{2d}
      \frac{\rho_{\nu}\big(a - s - (d-1)(h^*-h)\big)}{\rho_{\nu}(a)}\rho_{\nu}(a) \D a.
    \end{split}
  \end{equation}
  Hölder's inequality with $p=5/4$ and $q=5$ then yields
  \begin{align}
    \norm[\big]{\theta_s H_h[f]}_{L^{2d}(\nu^*)}^{2d}
    &\le \norm[\big]{G[g]}_{L^{5d/2}(\nu)}^{2d}
    \norm[\Big]{\frac{\rho_{\nu}(\cdot - s -
          (d-1)(h^*-h))}{\rho_{\nu}(\cdot)}}_{L^5(\nu )}.
  \end{align}
  The function inside the second norm on the right-hand side grows at
  most exponentially at infinity and thus its $L^{5}(\nu)$-norm is
  finite, we denote it $\tilde C_{h,s}$. The first norm satisfies
  $\norm{G[g]}_{L^{5d/2}(\nu)}^{2d} \leq \norm{G[g]}_{L^{d^2+1}(\nu)}^{2d}$
  since $5d/2 \leq d^2+1$ for every $d \geq 2$. Moreover, since
  $G[f](a) = E_Y\big[1_{[h^*,\infty)}(Y+\frac{a}{d})f(Y+\frac{a}{d})\big]$,
  the hypercontractivity \eqref{eq:L_h-hypercontractivity_1} implies that
  $\norm{G[g]}_{L^{d^2 + 1}(\nu)}^{2d} \leq d\,\norm{g}_{L^{2}(\nu)}^{2d}$
  which is finite because $g\in L^2(\nu)$. This together implies
  \eqref{eq:Hboundedoperator} with with $C_{h,s} = (d \tilde C_{h,s})^{1/(2d)}$.

  (b) Let $h,h'\in I$ be such that $\abs{h-h'}<1$, and let $f\in L^2(\nu^*)$ and
  $g= \theta_{h^*}^{-1} f$. Then, analogously to \eqref{eq:H_bound}, using then
  Lemma~\ref{lem:phi_dominant},
  \begin{equation}
    \begin{split}
      &\abs[\big]{H_h[f](a)-H_{h'}[f](a)}
      \\&\le \int_{h^*}^\infty \abs{g(x)}
      \abs[\bigg]{\rho_Y\Big(x - \frac{a + d h^* - (d-1)h}{d}\Big)
        - \rho_Y\Big(x - \frac{a + d h^* - (d-1)h'}{d}\Big)} \D x
      \\&\le c \abs{h-h'} \sum_{u  =\pm 1}
      \int_{h^*}^\infty \abs{g(x)}
      \rho_Y\Big(x - \frac{a + d h^* - (d-1)h}{d} + ur\Big) \D x
      \\&\le c \abs{h-h'} \sum_{u  =\pm 1} G[\abs g]\big(a+dh^* - (d-1)h+urd\big).
    \end{split}
  \end{equation}
  Therefore, using exactly the same arguments as in the proof of (a) and the triangle
  inequality,
  \begin{equation}
    \label{eq:Hhhprimenorm}
    \norm[\big]{H_h[f]-H_{h'}[f]}_{L^{2d}(\nu^*)} \le c \abs{h-h'}
    \norm{f}_{L^2(\nu^*)},
  \end{equation}
  which proves the stated continuity.
\end{proof}

We can now compute the first partial Fréchet derivatives of the function
$F$ defined in \eqref{eq:F}.

\begin{lemma}
  \label{lem:F_f}
  The partial derivative $D_f F$ at point $(h,f)$ is given by
  \begin{equation}
    \label{eq:DfF}
    D_f F(h,f)(g) = -g + 1_{[0,\infty)} d (1- H_h[f])^{d-1} H_h[g].
  \end{equation}
  In particular, $D_f F(h, f)$ is a bounded linear operator on
  $L^2(\nu^*)$ and it depends continuously on $h\in I$ and $f\in L^2(\nu^*)$.
\end{lemma}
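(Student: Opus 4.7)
The approach is to compute the directional derivative formally and then verify it is the Fréchet derivative using the improved integrability of $H_h[g]$ provided by Lemma~\ref{lem:image_of_H}. Fix $(h, f) \in I \times L^2(\nu^*)$ and let $g \in L^2(\nu^*)$. Since $H_h$ is linear, $H_h[f+g] = H_h[f] + H_h[g]$, and the binomial expansion gives
\begin{equation*}
  \big(1 - H_h[f] - H_h[g]\big)^d = \big(1-H_h[f]\big)^d - d\big(1-H_h[f]\big)^{d-1} H_h[g] + R_{h,f}(g),
\end{equation*}
where $R_{h,f}(g) \defeq \sum_{k=2}^d \binom{d}{k} \big(1-H_h[f]\big)^{d-k} \big(-H_h[g]\big)^k$. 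Consequently,
\begin{equation*}
  F(h, f+g) - F(h, f) = -g + 1_{[0,\infty)} d \big(1-H_h[f]\big)^{d-1} H_h[g] - 1_{[0,\infty)} R_{h,f}(g),
\end{equation*}
which identifies the candidate $D_f F(h,f)(g)$ as in \eqref{eq:DfF}.

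Next I verify that the candidate is a bounded linear map on $L^2(\nu^*)$ and that $R_{h,f}(g)$ is a genuine remainder. The key input is Lemma~\ref{lem:image_of_H}(a), which gives $H_h[g] \in L^{2d}(\nu^*)$ with $\norm{H_h[g]}_{L^{2d}(\nu^*)} \leq C_{h,0} \norm{g}_{L^2(\nu^*)}$. Together with $H_h[f] \in L^{2d}(\nu^*)$, each factor $(1-H_h[f])^{d-k}$ lies in $L^{2d/(d-k)}(\nu^*)$ (interpreted as $L^\infty$ when $k=d$), and Hölder's inequality with the split $\tfrac{1}{2} = \tfrac{k}{2d} + \tfrac{d-k}{2d}$ yields
\begin{equation*}
  \norm[\big]{\big(1-H_h[f]\big)^{d-k} H_h[g]^k}_{L^2(\nu^*)} \leq \norm[\big]{1-H_h[f]}_{L^{2d}(\nu^*)}^{d-k} \norm{H_h[g]}_{L^{2d}(\nu^*)}^k.
\end{equation*}
Taking $k=1$ shows $D_f F(h,f)$ is a bounded operator on $L^2(\nu^*)$ with operator norm controlled by $1 + C \norm{1-H_h[f]}_{L^{2d}(\nu^*)}^{d-1}$. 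Taking $k \geq 2$ yields $\norm{R_{h,f}(g)}_{L^2(\nu^*)} \leq C \sum_{k=2}^d \norm{g}_{L^2(\nu^*)}^k = o(\norm{g}_{L^2(\nu^*)})$ as $g \to 0$, confirming Fréchet differentiability.

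Finally, continuity of $(h,f) \mapsto D_f F(h,f)$ is established by splitting
\begin{equation*}
  D_f F(h_n, f_n)(g) - D_f F(h,f)(g) = 1_{[0,\infty)} d \Big[\big(1-H_{h_n}[f_n]\big)^{d-1} - \big(1-H_h[f]\big)^{d-1}\Big] H_{h_n}[g]
\end{equation*}
\begin{equation*}
  + 1_{[0,\infty)} d \big(1-H_h[f]\big)^{d-1} \big(H_{h_n}[g] - H_h[g]\big).
\end{equation*}
The second term is controlled in operator norm by Lemma~\ref{lem:image_of_H}(b), combined with Hölder as above. For the first term, Lemma~\ref{lem:image_of_H} gives $H_{h_n}[f_n] \to H_h[f]$ in $L^{2d}(\nu^*)$ (writing $H_{h_n}[f_n] - H_h[f] = H_{h_n}[f_n - f] + (H_{h_n} - H_h)[f]$), from which the telescoping identity for $u^{d-1} - v^{d-1}$ and Hölder yield convergence of $(1-H_{h_n}[f_n])^{d-1}$ to $(1-H_h[f])^{d-1}$ in $L^{2d/(d-1)}(\nu^*)$; another Hölder application then bounds the first term in operator norm, using that $\norm{H_{h_n}[g]}_{L^{2d}(\nu^*)} \leq C \norm{g}_{L^2(\nu^*)}$ uniformly in a neighbourhood of $h$. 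The main technical point is the bookkeeping of the exponents in Hölder's inequality, which is precisely what the $L^{2d}(\nu^*)$-regularity from Lemma~\ref{lem:image_of_H} is designed to make work.
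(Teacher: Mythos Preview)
Your proof is correct and follows essentially the same route as the paper. The only cosmetic difference is that you compute the derivative by a direct binomial expansion with an explicit remainder estimate, whereas the paper invokes the chain rule together with the known Fr\'echet derivative of the power map $f\mapsto f^k$ between the appropriate $L^p$ spaces; the boundedness and continuity arguments (H\"older with the $L^{2d}(\nu^*)$-bound from Lemma~\ref{lem:image_of_H}, the telescoping identity for $u^{d-1}-v^{d-1}$, and the decomposition $H_{h_n}[f_n]-H_h[f]=H_{h_n}[f_n-f]+(H_{h_n}-H_h)[f]$) are essentially identical in both treatments.
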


\begin{proof}
  We recall that for every $k\le d$ the Fréchet derivative of the power function
  $f\mapsto f^k$, viewed as a map from $L^{2d}(\nu^*)$ to
  $L^{2d/k}(\nu^*)$ is a continuous function of $f$ and is given by
  \begin{equation}
    \label{eq:derpow}
    D_f(f^k)(g) = k f^{k-1} g,
  \end{equation}
  (see, e.g., \cite[Chap.~4.3]{Zeidler}), and that the Fréchet
  derivatives satisfy the chain rule (e.g.,~Corollary to Theorem 4.D in
    \cite{Zeidler}). Therefore, using also that
  $H_h[f]\in L^{2d}(\nu^*)$ by Lemma~\ref{lem:image_of_H}(a),
  \begin{equation}
    \begin{split}
      D_f \big((1-H_h[f])^d\big)(g)
      &= - d (1- H_h[f])^{d-1} D_f H_h[f](g)
      \\&= - d (1- H_h[f])^{d-1} H_h[g],
    \end{split}
  \end{equation}
  where in the last step we used the fact that $H_h$ is a linear operator
  and thus $D_f H_h[f](g) = H_h[g]$. Recalling the definition
  \eqref{eq:F} of $F$, formula \eqref{eq:DfF} directly follows. The fact
  that $D_f F(h,f)$ is a bounded linear operator on $L^2(\nu^*)$ then
  follows by Lemma~\ref{lem:image_of_H}(a) and Hölder's inequality.

  To prove the continuity of $D_f F(h,f)$, let $h,h'\in I$ and
  $f,f',g\in L^2(\nu^*)$. Ignoring the non-essential prefactor
  $d1_{[0,\infty)}$,
  $D_fF(h,f)(g)- D_fF(h',f')(g)$, can be written as
  \begin{equation}
    \begin{split}
      &(1- H_h[f])^{d-1} H_h[g]- (1- H_{h'}[f'])^{d-1} H_{h'}[g]
      \\&=\big((1- H_h[f])^{d-1}- (1- H_{h'}[f'])^{d-1}\big) H_h[g]
      +  (1- H_{h'}[f'])^{d-1} \big(H_h[g]-H_{h'}[g]\big).
    \end{split}
  \end{equation}
  By Hölder's inequality and \eqref{eq:Hhhprimenorm}, the $L^2(\nu^*)$-norm of
  the second summand is bounded by
  $C(1+\norm{f'}_{L^2(\nu^*)})^{d-1}\norm{g}_{L^2(\nu^*)}\abs{h-h'}$. The first
  summand can be rewritten using the formula
  $a^{k}-b^k=(a-b)\sum_{i=0}^{k-1}a^{i}b^{k-1-i}$, and the so arising term
  $H_h[f]-H_{h'}[f']$ can be expanded as
  $(H_h[f]-H_{h'}[f])+(H_{h'}[f]-H_{h'}[f'])$. Therefore, using the linearity of
  $H_h$, Lemma~\ref{lem:image_of_H}(a,b), and Hölder's inequality again, the
  $L^2(\nu^*)$-norm of the first summand is bounded by
  \begin{equation}
    C\norm{g} \big[
      \abs{h-h'}\big(1+\norm{f}+\norm{f'}\big)^{d-1}
      + \norm{f-f'}\big(1+\norm{f}+\norm{f'}\big)^{d-2}
      \big],
  \end{equation}
  where all norms are in $L^2(\nu^*)$. The continuity of
  $(f,h)\mapsto D_f(f,h)$ then directly follows from these estimates.
\end{proof}

\begin{lemma}
  \label{lem:F_h}
  The partial derivative $D_h F$ at point $(h,f)$ is given by
  \begin{equation}
    \label{eq:DhF}
    D_h F(h, f) =  1_{[0,\infty)}d\big(1-H_h[f]\big)^{d-1}H_h'[f] ,
  \end{equation}
  where, for $h \in I$ and $f \in L^2(\nu^*)$,
  $H'_h[f]:\mathbb R\to  L^{2d}(\nu^*)$ is given by
  \begin{equation}
    \label{eq:Hprimedef}
    H_h'[f](a) = \frac{d-1}d \int_0^\infty f(x) \rho'_Y\Big(x - \frac{a}{d} +
      \frac {d-1}d h\Big) \D x,
  \end{equation}
  with $\rho'_Y$ being the derivative of $\rho_Y$. In particular,
  $D_h F(h,f)\in L^2(\nu^*)$ and it is a continuous
  function of $h\in I$ and $f\in L^2(\nu^*)$.
\end{lemma}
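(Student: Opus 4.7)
The plan is to follow the same template as the proof of Lemma~\ref{lem:F_f}, replacing the Fréchet derivative in $f$ by the one in $h$. Since the summand $-f$ and the indicator $1_{[0,\infty)}$ do not depend on $h$, only $(1-H_h[f])^d$ contributes; applying the chain rule for Fréchet derivatives together with \eqref{eq:derpow} should give
\begin{equation*}
  D_h F(h, f) = -\, 1_{[0, \infty)}\, d\,(1 - H_h[f])^{d-1}\, D_h\bigl(1-H_h[f]\bigr).
\end{equation*}
Differentiating \eqref{eq:Hexpl} formally under the integral sign in $h$ identifies $D_h H_h[f]$ with the function $H'_h[f]$ defined in \eqref{eq:Hprimedef}, and after cancelling the two minus signs this reproduces \eqref{eq:DhF}. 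So the two points to verify carefully are (i) that $D_h H_h[f] = H'_h[f]$ as a Fréchet derivative into a suitable $L^p(\nu^*)$-space, and (ii) the integrability and continuity assertions.

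For (i), for each fixed $a\ge 0$ the standard dominated-convergence argument applied to the integrand yields
\begin{equation*}
  \lim_{s\to 0} \frac{H_{h+s}[f](a)-H_h[f](a)}{s} = H'_h[f](a).
\end{equation*}
To upgrade this to convergence in $L^{2d}(\nu^*)$, we dominate the difference quotient by Lemma~\ref{lem:phi_dominant} applied to $\rho_Y$: for $\abs s<1$,
\begin{equation*}
  \Bigl|\tfrac{H_{h+s}[f](a)-H_h[f](a)}{s}\Bigr|
  \le c\sum_{u=\pm 1}\int_0^\infty \abs{f(x)}\,\rho_Y\Bigl(x-\tfrac{a}{d}+\tfrac{d-1}{d}h+ur\Bigr)\D x,
\end{equation*}
where $r$ is the constant of Lemma~\ref{lem:phi_dominant}. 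Each integral on the right is a shifted copy of $H_h[\abs f]$, hence lies in $L^{2d}(\nu^*)$ by Lemma~\ref{lem:image_of_H}(a). Dominated convergence then gives $L^{2d}(\nu^*)$-convergence, so $H'_h[f]\in L^{2d}(\nu^*)$ and $D_h H_h[f]=H'_h[f]$. In particular, Hölder's inequality with exponents $\tfrac{2d}{d-1}$ and $2d$ yields
\begin{equation*}
  \norm{(1-H_h[f])^{d-1} H'_h[f]}_{L^2(\nu^*)}
  \le \norm{(1-H_h[f])^{d-1}}_{L^{2d/(d-1)}(\nu^*)}\,\norm{H'_h[f]}_{L^{2d}(\nu^*)},
\end{equation*}
and both factors are finite by Lemma~\ref{lem:image_of_H}(a) and the estimate just shown, so $D_h F(h,f) \in L^2(\nu^*)$.

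For (ii), continuity of $(h,f)\mapsto D_h F(h,f)$ will mimic the corresponding step in the proof of Lemma~\ref{lem:F_f}. We decompose
\begin{equation*}
  (1-H_h[f])^{d-1}H'_h[f] - (1-H_{h'}[f'])^{d-1}H'_{h'}[f']
\end{equation*}
telescopically, control the $(1-H)^{d-1}$ difference via the identity $a^k-b^k=(a-b)\sum_{i=0}^{k-1} a^i b^{k-1-i}$, and expand $H_h[f]-H_{h'}[f']$ as $(H_h[f]-H_{h'}[f])+H_{h'}[f-f']$, bounding each term with Lemma~\ref{lem:image_of_H}(a,b). The only genuinely new ingredient is a continuity estimate for $H'_h$ analogous to \eqref{eq:Hhhprimenorm}; this follows by one more application of Lemma~\ref{lem:phi_dominant}, now to $\rho'_Y$, whose derivatives decay Gaussianly just as those of $\rho_Y$.

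The main technical obstacle is step (i): passing from the pointwise derivative to a genuine Fréchet derivative in $L^{2d}(\nu^*)$. This requires a Gaussian-type dominant that is still integrable against $\nu^*$ after the hypercontractivity bookkeeping from Lemma~\ref{lem:image_of_H} is carried out, and it is exactly Lemma~\ref{lem:phi_dominant} that supplies such a dominant.
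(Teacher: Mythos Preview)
Your proposal is correct and follows essentially the same route as the paper's proof: both establish $D_h H_h[f]=H'_h[f]$ by first proving pointwise convergence via dominated convergence, then upgrading to $L^{2d}(\nu^*)$-convergence by dominating the difference quotient with Lemma~\ref{lem:phi_dominant} and recognising the resulting bound as a shift of $H_h[\abs f]$ controlled by Lemma~\ref{lem:image_of_H}(a); the formula for $D_hF$ then follows by the chain rule, and continuity is handled by the same telescoping argument as in Lemma~\ref{lem:F_f} together with analogues of \eqref{eq:Hboundedoperator} and \eqref{eq:Hhhprimenorm} for $H'_h$. The only cosmetic difference is that the paper phrases the $H'_h$ estimates as ``by similar arguments as in the proof of Lemma~\ref{lem:image_of_H}'' rather than invoking Lemma~\ref{lem:phi_dominant} for $\rho'_Y$ directly, but since $\rho'_Y(x)=-x\sigma_Y^{-2}\rho_Y(x)$ these amount to the same thing.
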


\begin{proof}
  We start by showing that for any $f\in L^2(\nu^*)$ the $h$-derivative of
  $H_h[f]$ is given by $D_h H_h[f] = H_h'[f] \in L^{2d}(\nu^*)$. To see this we
  have to check that
  \begin{equation}
    \label{eq:Hprime}
    \lim_{s\to0}\frac{1}{s} \big(H_{h+s}[f] - H_h[f]\big)
    = H'_h[f]\quad \text{ in } L^{2d}(\nu^*).
  \end{equation}
  We first show the pointwise convergence.
  By \eqref{eq:Hexpl}, for fixed $a \in \R$, it holds
  \begin{equation}
    \begin{split}
      &\frac{1}{s}\big(H_{h+s}[f](a) - H_h[f](a)\big) \\
      &\qquad = \int_0^\infty f(x) \frac 1s
      \bigg(\rho_Y\Big(x - \frac ad + \frac{d-1}d (h+s)\Big) -
        \rho_Y\Big(x-\frac ad +\frac {d-1}d h\Big)\bigg) \D x \\
      &\qquad \eqdef \int_0^\infty f(x) \Phi_s\Big(x - \frac ad  + \frac{d-1}d
        h\Big) \D x.
    \end{split}
  \end{equation}
  Obviously,
  $\lim_{s\to 0}\Phi_s(x - \frac ad  + \frac{d-1}{d}h)= \frac{d-1}d \rho'_Y(x - \frac ad + \frac{d-1}d h)$,
  and, by Lemma~\ref{lem:phi_dominant},
  $\max_{|s|<1}\abs{\Phi_s(y)}\le c(\rho_Y(y+r) + \rho_Y(y-r)) \eqdef \bar{\Phi}(y)$.
  Since $f \in L^2(\nu^*) \subset L^1(\nu^*)$, it holds that
  $f\rho_{\nu^*}\in L^1(\D x)$. Moreover, since the variance of $\nu^*$ is
  larger than the variance of $Y$, that is $\sigma_\nu^2 > \sigma_Y^2$,
  the ratio $\rho_Y(y+c)/\rho_{\nu^*}(y)$ is bounded for any
  $c\in \mathbb R$. As consequence,
  $f(\cdot)\bar{\Phi}(\cdot - \frac ad + \frac{d-1}d h)\in
  L^1([0,\infty),\D x) $, and thus, by the dominated convergence theorem,
  \begin{equation}
    \begin{split}
      &\lim_{s\to 0}\frac{1}{s}\big(H_{h+s}[f](a) - H_h[f](a)\big)
      = \int_0^\infty f(x) \lim_{s \to \infty} \Phi_s\Big(x-\frac ad
        +\frac{d-1}{d}h\Big) \D x \\
      &= \frac{d-1}d \int_0^\infty f(x) \rho'_Y\Big(x - \frac{a}{d} + \frac{d-1}d
        h\Big) \D x
      = H_h'[f](a),
    \end{split}
  \end{equation}
  which establishes the pointwise convergence in \eqref{eq:Hprime}.

  To show the convergence in $L^{2d}(\nu^*)$ we observe, by
  Lemma~\ref{lem:phi_dominant} again, that the function
  \begin{equation}
    \bar{H}_h[f](a) \defeq
    \int_0^\infty \abs{f(x)}
    \bar{\Phi}\Big(x - \frac ad + \frac{d-1}d h\Big)\D x
    = c\big(H_h[f](a+r) + H_h[f](a-r)\big)
  \end{equation}
  dominates $\abs[\big]{\frac{1}{s}(H(h+s,f)-H(h,s))}$  for all small
  $\abs{s}$. Moreover, by
  Lemma~\ref{lem:image_of_H},
  $\theta_{\pm r}{H}_h{[f] \in L^{2d}}(\nu^*)$ and thus also
  $\bar H_h[f]\in L^{2d}(\nu^{*})$. The $L^{2d}(\nu^*)$ convergence in
  \eqref{eq:Hprime} thus follows by another application of the dominated
  convergence theorem.

  Claim \eqref{eq:DhF} then follows from \eqref{eq:Hprime} and the
  definition \eqref{eq:F} of $F$ by
  the chain rule:
  \begin{align}
    D_h F(h,f) &= - D_h\Big( 1_{[0, \infty)}   \big( 1 -
          H_h[f]\big)^d\Big)
    =  1_{[0,\infty)}d\big(1-H_h[f]\big)^{d-1} H'_h[f],
  \end{align}
  as required.

  Finally, we show that $(h,f)\mapsto D_h F(h,f)$ is continuous. We first
  observe that by similar arguments as in the proof of
  Lemma~\ref{lem:image_of_H}, one can show that $H'_h[f]$ satisfies
  analogous estimates as $H_h[f]$, namely, for $h,h' \in \R$ and
  $f \in L^2(\nu^*)$,
  \begin{equation}
    \begin{split}
      \label{eq:Hprimenorms}
      \norm{H'_h[f]}_{L^{2d}(\nu^*)} &\leq C_h \norm{f}_{L^2(\nu^*)},
      \\
      \norm{H'_h[f]- H'_{h'}[f]}_{L^{2d}(\nu^*)}
      &\leq c \abs{h-h'} \norm{f}_{L^{2}(\nu^*)}.
    \end{split}
  \end{equation}
  Then, again similarly to the proof of the continuity of $D_f F(h,f)$,
  ignoring the non-essential prefactor $d 1_{[0,\infty)}$,
  $D_h F(h,f) - D_h F(h',f')$ can be written as
  \begin{equation}
    \begin{split}
      &(1- H_h[f])^{d-1} H'_h[f]- (1- H_{h'}[f'])^{d-1} H'_{h'}[f']
      \\&=\big((1- H_h[f])^{d-1}- (1- H_{h'}[f'])^{d-1}\big) H'_h[f]
      +  (1- H_{h'}[f'])^{d-1} \big(H'_h[f]-H'_{h'}[f']\big).
    \end{split}
  \end{equation}
  From this the continuity of $D_h F(h,f)$ follows by the same arguments
  as before, replacing some of the estimates on $H_h[f]$ by analogous
  estimates \eqref{eq:Hprimenorms} when needed.
\end{proof}

\begin{lemma}
  \label{lem:second_part_derv_F}
  The relevant second partial Fréchet derivatives of $F$ are continuous
  and given by
  \begin{equation}
    \label{eq:F_ff}
    D_{ff}F(h,f) (g_1, g_2) = - 1_{[0,\infty)} d (d-1)
    \big(1-H_h[f]\big)^{d-2} H_h[g_1] H_h[g_2].
  \end{equation}
  and
  \begin{equation}
    \label{eq:F_hf}
    \begin{split}
      D_{hf}F(h,f) (g) = &- 1_{[0,\infty)} d (d-1)
      \big(1-H_h[f]\big)^{d-2} H_h[g] H_h'[f]
      \\ &+  1_{[0,\infty)} d \big(1-H_h[f]\big)^{d-1} H_h'[g].
    \end{split}
  \end{equation}
\end{lemma}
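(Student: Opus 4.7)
The plan is to differentiate the formulas of Lemmas~\ref{lem:F_f} and~\ref{lem:F_h} once more, using the same toolbox as in those proofs: the chain rule for Fréchet derivatives, the product rule, the linearity of $H_h$ in $f$ (so that $D_f H_h[f](g) = H_h[g]$), the identity $D_h H_h[f] = H'_h[f]$ established inside the proof of Lemma~\ref{lem:F_h}, and the power-rule derivative $D_f(f^k)(g) = k f^{k-1} g$ viewed as a map from $L^{2d}(\nu^*)$ into $L^{2d/k}(\nu^*)$. Differentiating
\begin{equation*}
D_f F(h,f)(g_1) = -g_1 + 1_{[0,\infty)}\, d(1-H_h[f])^{d-1}H_h[g_1]
\end{equation*}
once more in the direction $g_2$ kills the $-g_1$ term and, since $H_h[g_1]$ is independent of~$f$, acts only on the power factor, yielding \eqref{eq:F_ff}. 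Differentiating the same expression in~$h$ applies the product rule to the two factors $(1-H_h[f])^{d-1}$ and $H_h[g]$: the first contributes $-(d-1)(1-H_h[f])^{d-2}H'_h[f]$ by the chain rule, the second contributes $H'_h[g]$, giving the two summands in~\eqref{eq:F_hf}.

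That the resulting quantities actually lie in $L^2(\nu^*)$ is checked by Hölder with exponents $\tfrac{d-2}{2d}+\tfrac{1}{2d}+\tfrac{1}{2d}=\tfrac12$: by Lemma~\ref{lem:image_of_H}(a), $H_h[f]$ and $H_h[g_i]$ belong to $L^{2d}(\nu^*)$, and hence $(1-H_h[f])^{d-2}\in L^{2d/(d-2)}(\nu^*)$ (with the convention that this factor is $1$ when $d=2$), so the product $(1-H_h[f])^{d-2}H_h[g_1]H_h[g_2]$ belongs to $L^2(\nu^*)$ with norm bounded by a polynomial in $\norm{f}_{L^2(\nu^*)}$ times $\norm{g_1}_{L^2(\nu^*)}\norm{g_2}_{L^2(\nu^*)}$. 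The same reasoning, together with the estimate $\norm{H'_h[f]}_{L^{2d}(\nu^*)} \le C_h \norm{f}_{L^2(\nu^*)}$ from~\eqref{eq:Hprimenorms}, shows that $D_{hf}F(h,f)(g)\in L^2(\nu^*)$ and that the resulting (bi)linear operators are bounded.

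The main obstacle is the joint continuity in $(h,f)$, which I would handle by the same telescoping-and-Hölder scheme used at the end of the proofs of Lemmas~\ref{lem:F_f} and~\ref{lem:F_h}. To estimate, say,
\begin{equation*}
(1-H_h[f])^{d-2}H_h[g_1]H_h[g_2] - (1-H_{h'}[f'])^{d-2}H_{h'}[g_1]H_{h'}[g_2]
\end{equation*}
in $L^2(\nu^*)$, telescope across the three factors, expand the power difference via $a^k-b^k=(a-b)\sum_{i=0}^{k-1}a^i b^{k-1-i}$, and split $H_h[f]-H_{h'}[f'] = (H_h-H_{h'})[f] + H_{h'}[f-f']$. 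Each summand is then controlled by \eqref{eq:Hboundedoperator} and \eqref{eq:Hhhprimenorm} combined with Hölder, yielding a bound of the form $c(\abs{h-h'}+\norm{f-f'}_{L^2(\nu^*)})$ times a polynomial in $\norm{f},\norm{f'},\norm{g_1},\norm{g_2}$ in $L^2(\nu^*)$. For $D_{hf}F$ the same scheme applies with one factor of $H_h$ replaced by $H'_h$ and with \eqref{eq:Hprimenorms} used in place of \eqref{eq:Hboundedoperator}. No conceptual difficulty arises beyond careful bookkeeping.
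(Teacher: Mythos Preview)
Your proposal is correct and follows essentially the same route as the paper: power/chain/product rules to obtain the formulas, Hölder with exponents summing to $\tfrac12$ to land in $L^2(\nu^*)$, and the identical telescoping-plus-Hölder scheme (using \eqref{eq:Hboundedoperator}, \eqref{eq:Hhhprimenorm}, \eqref{eq:Hprimenorms}) for continuity. The only cosmetic difference is that for the mixed derivative the paper differentiates $D_hF$ in the $f$-direction (writing $D_hF(h,\cdot)=F_1(\cdot)H'_h[\cdot]$ and applying the product rule), whereas you differentiate $D_fF$ in the $h$-direction; both orders yield the same expression \eqref{eq:F_hf}.
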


\begin{proof}
  For fixed $h\in\R$, $D_f F(h, \cdot)$ can be written as a composition of functions
  $D_f F(h, \cdot) = F_2 \circ F_1$ with $F_1: L^2(\nu^*) \to L^{2d/(d-1)}(\nu^*)$,
  $F_1(f) = 1_{[0,\infty)} d (1-H_h[f])^{d-1}$ and
  $F_2: L^{2d/(d-1)}(\nu^*) \to B(L^2(\nu^*), L^2(\nu^*)), F_2(l)(g) = -g + l \cdot H_h[g]$.
  By \eqref{eq:derpow}, $F_1$ is $C^1$ with
  $DF_1(f)(g) = -1_{[0,\infty)} d (d-1) (1-H_h[f])^{d-2}H_h[g]$. Further,
  via calculating the term $F_2(l+u) - F_2(l)$, one gets
  $DF_2(l)(u)(g) = u \cdot H_h[g]$. Using the chain rule
  gives $D_{ff}F(h,f)$ as stated in \eqref{eq:F_ff}.

  To compute $D_{hf}F(h,f)$, we fix $h\in \R$ and write $D_h F(h, \cdot)$ as a
  multiplication of functions $D_h F(h, \cdot) = F_1(\cdot) H'_h[\cdot]$, where
  $F_1$ is as described above. Noting that due to linearity, $D_f H'_h[f](g) = H'[g]$,
  and using the product rule for Fréchet derivatives (see,
  e.g.,~Standard Example 3 to Theorem 4.D in \cite{Zeidler}) gives
  \eqref{eq:F_hf}.

  To see the continuity of $D_{ff} F(h,f)$, let $h,h' \in I$ and
  $f,f',g \in L^2(\nu^*)$. The difference
  $D_{ff} F(h,f)(g_1, g_2) - D_{ff} F(h',f')(g_1, g_2)$ can be written as
  (again ignoring the prefactor $d 1_{[0,\infty)}$)
  \begin{equation}
    \begin{split}
      (1- &H_h[f])^{d-2} H_h[g_1] H_h[g_2]
      - (1- H_{h'}[f'])^{d-2} H_{h'}[g_1] H_{h'}[g_2]
      \\&=\big((1- H_h[f])^{d-2}- (1- H_{h'}[f'])^{d-1}\big) H_h[g_1] H_h[g_2]
      \\&\quad+ \frac{1}{2} (1- H_{h'}[f'])^{d-2} \big(H_h[g_1]-H_{h'}[g_1]\big)\big(H_h[g_2]+H_{h'}[g_2]\big)
      \\&\quad+ \frac{1}{2} (1- H_{h'}[f'])^{d-2} \big(H_h[g_1]+H_{h'}[g_1]\big)\big(H_h[g_2]-H_{h'}[g_2]\big).
    \end{split}
  \end{equation}
  Using the Hölder's inequality for three functions on
  every summand together with similar arguments as in the proofs of the
  Lemmas~\ref{lem:F_f} and \ref{lem:F_h}, then implies the continuity of
  $(f, h) \mapsto D_{ff}F(f,h)$. Similarly, for $D_{fh}F(h,f)$, the
  analogous decomposition of $D_{fh}F(h,f) - D_{fh}F(h',f')$ together
  with previous arguments gives the continuity.
\end{proof}

It is left to check that properties (c) and (d) of
Proposition~\ref{prop:bifurcation1} are satisfied.

\begin{lemma}
  \label{lem:dimN_codimR}
  Let $\chi^* = \theta_{h^*}\chi_{h^*}$ be a shift of $\chi_{h^*}$.
  It holds that
  \begin{equation}
    N(D_f F(h^*, 0)) = R(D_f F(h^*,0))^\perp = \spn\set{\chi^*},
  \end{equation}
  where $^\perp$ stands for the orthogonal complement in $L^2(\nu^*)$.
  In particular,
  \begin{equation}
    \dim N(D_f F(h^*, 0)) = \codim R(D_f F(h^*,0)) = 1.
  \end{equation}
\end{lemma}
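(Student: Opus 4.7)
The plan is to identify $D_f F(h^*, 0)$, regarded as a bounded operator on $L^2(\nu^*)$, with a unitary conjugate of $L_{h^*} - \Id$, and then to read off both its kernel and its range directly from Proposition~\ref{prop:L_h-chi-connection}. As the first step I would evaluate the derivative using Lemma~\ref{lem:F_f}: since $H_{h^*}[0] = 0$, formula \eqref{eq:DfF} reduces to $D_f F(h^*, 0)(g) = -g + d \cdot 1_{[0,\infty)} H_{h^*}[g]$. Invoking the defining relation $H_h = d^{-1}\theta_h L_h \theta_h^{-1}$ from \eqref{eq:HLrelation}, and recalling from \eqref{eq:L_h} that the image of $L_{h^*}$ is automatically supported on $[h^*,\infty)$, so that $\theta_{h^*} L_{h^*}\theta_{h^*}^{-1}[g]$ vanishes on $(-\infty, 0)$ and the leading indicator $1_{[0,\infty)}$ is redundant, one arrives at
\begin{equation*}
D_f F(h^*, 0) = -\Id + T, \qquad T \defeq \theta_{h^*} L_{h^*} \theta_{h^*}^{-1}.
\end{equation*}

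Next, I would observe that $\theta_{h^*}$ is an isometric isomorphism from $L^2(\nu)$ onto $L^2(\nu^*)$---this is precisely why $\nu^*$ was defined via $\rho_{\nu^*} = \theta_{h^*}\rho_\nu$, and the identity $\norm{\theta_{h^*} f}_{L^2(\nu^*)} = \norm{f}_{L^2(\nu)}$ follows from a one-line change of variables. Consequently $T$ is unitarily equivalent to $L_{h^*}$ and inherits all its spectral data from Proposition~\ref{prop:L_h-chi-connection}: it is self-adjoint, non-negative and Hilbert--Schmidt, and its largest eigenvalue $\lambda_{h^*} = 1$ is simple with corresponding normalised eigenfunction $\chi^* = \theta_{h^*}\chi_{h^*}$. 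Simplicity of this eigenvalue then directly yields $N(D_f F(h^*, 0)) = N(T - \Id) = \spn\set{\chi^*}$.

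For the range, compactness of $T$ makes $T - \Id$ a Fredholm operator of index zero, hence $R(T - \Id)$ is closed; self-adjointness of $T$ then gives
\begin{equation*}
R(T - \Id)^\perp = N((T - \Id)^*) = N(T - \Id) = \spn\set{\chi^*},
\end{equation*}
so that $R(D_f F(h^*, 0)) = \spn\set{\chi^*}^\perp$ and $\codim R(D_f F(h^*, 0)) = 1$, which establishes the claim. In this strategy the only step requiring real care is verifying that $\theta_{h^*}$ intertwines the two Hilbert-space structures as an isometric isomorphism; once that is in hand, every remaining assertion is a direct consequence of standard spectral theory for compact self-adjoint operators combined with the facts about $L_{h^*}$ recorded in Proposition~\ref{prop:L_h-chi-connection}.
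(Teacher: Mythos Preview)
Your proof is correct and follows essentially the same route as the paper: both identify $D_f F(h^*,0)$ with $\theta_{h^*}(L_{h^*}-\Id)\theta_{h^*}^{-1}$ via \eqref{eq:HLrelation} and Lemma~\ref{lem:F_f}, and then read off the kernel and the orthogonal complement of the range from the self-adjointness of $L_{h^*}$ and the simplicity of the eigenvalue $\lambda_{h^*}=1$ (Proposition~\ref{prop:L_h-chi-connection}). The one place you go slightly beyond the paper is in explicitly invoking the Fredholm property of $T-\Id$ to guarantee that $R(D_fF(h^*,0))$ is closed before passing from $R^\perp=\spn\{\chi^*\}$ to $\codim R=1$; the paper leaves this implicit, so your argument is, if anything, a touch more complete.
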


\begin{proof}
  By Lemma~\ref{lem:F_f} and \eqref{eq:HLrelation} it holds
  \begin{equation}
    \label{eq:Dhstar}
    D_f F(h^*, 0)(g) = -g + 1_{[0,\infty)} d H_{h^*}[g] = -g + \theta_{h^*}
    L_h[\theta^{-1}_{h^*} g].
  \end{equation}
  Therefore $g\in N(D_f F(h^*,0))$ if and only if $\theta_{h^*}^{-1}g$ is an
  eigenfunction of $L_{h^*}$ corresponding to eigenvalue $1$. By
  Proposition~\ref{prop:L_h-chi-connection}, $\chi$ is the only such
  eigenfunction, and thus
  \begin{equation}
    N(D_f F(h^*, 0)) = \spn\set{\theta_{h^*} \chi_{h^*}} =
    \spn\set{\chi^*}
  \end{equation}
  and its dimension is equal to one.

  The property that $l \in R(D_f F(h^*, 0))^\perp$ is equivalent to
  $\inp[\big]{l}{D_f F(h^*, 0)(g)}_{\nu^*} = 0$
  for all $g \in L^2(\nu^*)$. However, since by
  Proposition~\ref{prop:L_h-chi-connection} $L_h$ is self-adjoint on
  $L^2(\nu)$,  \eqref{eq:Dhstar} implies that $D_f F(h^*,0)$ is
  self-adjoint on  $L^2(\nu^*)$. Therefore, this is equivalent to
  $0=\inp[\big]{l}{D_f F(h^*, 0)(g)}_{\nu^*} = \inp[\big]{D_f F(h^*,0)(l)}{g}_{\nu^*} $
  for all $g \in L^2(\nu^*)$. However, this is true iff
  $l \in N(D_f F(h^*,0)) = \spn\set{\chi^*}$.
\end{proof}

\begin{lemma}
  \label{lem:bifurcation_last_req}
  It holds
  \begin{equation}
    D_{hf} F(h^*, 0)(\chi^*) \not\in R(D_f F(h^*, 0)).
  \end{equation}
\end{lemma}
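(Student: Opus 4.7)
My plan is to reduce the non-inclusion in $R(D_f F(h^*,0))$ to a non-vanishing inner product and then evaluate it explicitly using the eigenvalue equation and self-adjointness of $L_{h^*}$. By Lemma~\ref{lem:dimN_codimR}, $R(D_f F(h^*,0))^\perp = \operatorname{span}\{\chi^*\}$, so the conclusion of the lemma is equivalent to
\begin{equation*}
\inp{D_{hf}F(h^*,0)(\chi^*)}{\chi^*}_{\nu^*} \neq 0.
\end{equation*}
Setting $f = 0$ in \eqref{eq:F_hf} collapses the first summand (since $H_{h^*}[0] \equiv 0$ and $H'_{h^*}[0] \equiv 0$) and yields $D_{hf}F(h^*,0)(\chi^*) = d\, 1_{[0,\infty)} H'_{h^*}[\chi^*]$.

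Next, I would unfold this inner product into a double integral and substitute $u = x + h^*$, $v = a + h^*$, using $\chi^* = \theta_{h^*}\chi$ and $\rho_{\nu^*} = \theta_{h^*}\rho_\nu$ together with the explicit form \eqref{eq:Hprimedef}. The quantity $x - a/d + (d-1)h^*/d$ simplifies to $u - v/d$, so the inner product becomes
\begin{equation*}
(d-1) \int_{h^*}^\infty\!\!\int_{h^*}^\infty \chi(u)\,\rho'_Y(u - v/d)\,\chi(v)\,\rho_\nu(v)\,du\,dv.
\end{equation*}
Writing $\rho'_Y(w) = -w\,\rho_Y(w)/\sigma_Y^2$ splits this into $-\frac{d-1}{\sigma_Y^2}(A - d^{-1}B)$ where $A = \inp{\operatorname{Id}\cdot (d\int_{h^*}^\infty \chi(u)\rho_Y(u-\cdot/d)du)}{\chi}_\nu/d$ and $B$ has the $v$-factor on the outer integral.

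The key step is evaluating the integrals $A$ and $B$ without any explicit knowledge of $\chi$. For $B$, the inner integral is exactly the defining integral in $L_{h^*}[\chi]$, so the eigenvalue relation $L_{h^*}[\chi] = \chi$ gives $B = d^{-1}\inp{\operatorname{Id}}{\chi^2}_\nu$. For $A$, one recognizes that $dA = \inp{L_{h^*}[\operatorname{Id}\cdot\chi]}{\chi}_\nu$; applying self-adjointness of $L_{h^*}$ (Proposition~\ref{prop:L_h-chi-connection}) together with $L_{h^*}[\chi] = \chi$ yields $dA = \inp{\operatorname{Id}\cdot\chi}{\chi}_\nu = \inp{\operatorname{Id}}{\chi^2}_\nu$, hence $A = B = d^{-1}\inp{\operatorname{Id}}{\chi^2}_\nu$. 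Substituting back,
\begin{equation*}
\inp{D_{hf}F(h^*,0)(\chi^*)}{\chi^*}_{\nu^*} = -\frac{(d-1)^2}{d(d+1)}\,\inp{\operatorname{Id}}{\chi^2}_\nu.
\end{equation*}

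It remains to check $\inp{\operatorname{Id}}{\chi^2}_\nu \neq 0$. Since $\chi = \chi_{h^*}$ is supported on $[h^*,\infty)$ and $h^* > 0$ by Proposition~\ref{prop:L_h-chi-connection}, we have $\inp{\operatorname{Id}}{\chi^2}_\nu \ge h^*\|\chi\|_\nu^2 = h^* > 0$, completing the argument. The main obstacle is the bookkeeping of the shifts relating $(H_h, \chi^*, \nu^*)$ to $(L_{h^*}, \chi, \nu)$; once the substitution is set up correctly, the self-adjointness trick for $A$ makes the computation essentially free, and the sign of $h^*$ is precisely what rules out any miraculous cancellation.
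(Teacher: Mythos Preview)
Your proof is correct and follows essentially the same route as the paper's: reduce the claim via Lemma~\ref{lem:dimN_codimR} to showing $\inp{D_{hf}F(h^*,0)(\chi^*)}{\chi^*}_{\nu^*}\neq 0$, expand $H'_{h^*}$ using $\rho'_Y(w)=-w\rho_Y(w)/\sigma_Y^2$, split the resulting factor $u-v/d$ into two pieces, and evaluate each via the eigenvalue relation and self-adjointness to obtain the same value $-\tfrac{(d-1)^2}{d(d+1)}\inp{\Id}{\chi^2}_\nu$, which is nonzero since $h^*>0$. The only cosmetic difference is that you shift back to the $(L_{h^*},\chi,\nu)$ coordinates by substitution before computing, whereas the paper stays in the $(H_{h^*},\chi^*,\nu^*)$ coordinates and only unshifts in the final line; your displayed formula for $A$ is slightly garbled (it matches what you call $B$), but the subsequent identification $dA=\inp{L_{h^*}[\Id\cdot\chi]}{\chi}_\nu$ is the correct one and the computation goes through.
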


\begin{proof}
  By Lemma~\ref{lem:dimN_codimR}, $g \in R(D_f F(h^*, 0))$ iff $g$ orthogonal to
  $ \spn\set{\chi^*}$. We thus only need to show that
  $\inp[\big]{\chi^*}{D_{hf}F(h^*, 0)(\chi^*)}_{\nu^*} \neq 0$.

  Recall that $\sigma_Y^2 = \frac{d+1}{d}$, and thus
  $\rho'_Y(x) = -\frac{d}{d+1} x \rho_Y(x)$.  By
  Lemma~\ref{lem:second_part_derv_F}, for $a,s \in \R$, using that
  $H_{h^*}[0]= 0$ and the definition \eqref{eq:Hprimedef} of $H_h'$,
  \begin{equation}
    \begin{split}
      &D_{hf}F(h^*, 0)(\chi^*)(a)
      = 1_{[0,\infty)}(a) d H'_{h^*}[\chi^*](a)
      \\ &= 1_{[0,\infty)}(a) (d-1)  \int_0^\infty \chi^*(x)
      \rho'_Y\Big(x- \frac ad + \frac{d-1}d h^*\Big) \D x
      \\&= - 1_{[0,\infty)}(a) \frac{d(d-1)}{d+1}
      \int_0^\infty \chi^*(x) \Big(x - \frac ad + \frac{d-1}d h^*\Big)
      \rho_Y\Big(x- \frac ad + \frac{d-1}{d} h^*\Big) \D x.
    \end{split}
  \end{equation}
  Writing $x-\frac ad + \frac {d-1}d h^* = x + h^* - \frac 1d (a+h^*)$,
  and observing that $(\cdot +h^*) = \theta_{h^*} \Id$ with $\Id$ being the
  identity map on $\mathbb R$,
  this can be written as
  \begin{equation}
    \begin{split}
      &= 1_{[0,\infty)}(a) \frac{d-1}{d+1}
      \Big((a+h^*)  H_{h^*}[\chi^*](a)
        - d H_{h^*}[\chi^*\theta_{h^*} \Id ](a)\Big)
      \\&= 1_{[0,\infty)}(a) \frac{d-1}{d+1}
      \Big(\frac 1 d (\chi^*\theta_{h^*}\Id)(a)
        - d H_{h^*}[\chi^*\theta_{h^*} \Id ](a)\Big),
    \end{split}
  \end{equation}
  where in the last equality we used that $\chi^*$ is an eigenfunction of
  $H_{h^*}$ with eigenvalue $d^{-1}$, by \eqref{eq:HLrelation}.
  Therefore, using $\chi^* = 1_{[0,\infty)}\chi^*$ and the
  self-adjointness of $H_{h^*}$ on $L^2(\nu^2)$,
  \begin{equation}
    \label{eq:inp_chi_Fhf}
    \begin{split}
      &\inp[\big]{\chi^*}{D_{hf}F(h^*, 0)(\chi^*)}_{\nu^*}
      \\ &\qquad=
      \frac{d-1}{d+1} \Big(
        \frac{1}{d}\inp[\big]{\chi^*}{\chi^*\theta_{h^*}\Id}_{\nu^*}
        - d \inp[\big]{\chi^*}{H_{h^*}[\chi^*\theta_{h^*}\Id]}_{\nu^*}
        \Big)
      \\ &\qquad=
      \frac{d-1}{d+1} \Big(
        \frac{1}{d}\inp[\big]{\chi^*}{\chi^*\theta_{h^*}\Id}_{\nu^*}
        - d \inp[\big]{H_{h^*}[\chi^*]}{\chi^*\theta_{h^*}\Id}_{\nu^*}
        \Big)
      \\ &\qquad=
      \frac{d-1}{d+1} \Big(
        \frac{1}{d}\inp[\big]{\chi^*}{\chi^*\theta_{h^*}\Id}_{\nu^*}
        -  \inp[\big]{\chi^*}{\chi^*\theta_{h^*}\Id}_{\nu^*}
        \Big)
      \\&\qquad=
      -\frac{(d-1)^2}{d(d+1)} \inp{\chi}{\chi \Id}_{\nu} \neq 0,
    \end{split}
  \end{equation}
  where in the last step we first applied $\theta^{-1}_{h^*}$ and then used
  the fact that $\chi(a) > 0$ iff $a \in [h^*,\infty)$, and thus $\inp{\chi}{  \chi \Id}_{\nu} > 0$.
\end{proof}

We can now prove Theorem~\ref{thm:percolation_prob}.

\begin{proof}[Proof of Theorem \ref{thm:percolation_prob}]
  As already explained, we apply Proposition \ref{prop:bifurcation1} to
  the function $F$ defined in \eqref{eq:F}, with $X=Y=L^2(\nu^*)$, $h$ and $f$ playing
  the role of
  $t$ and $x$, respectively, and with $h^*$ corresponding to $t_0$. By
  Lemmas~\ref{lem:F_f}--\ref{lem:bifurcation_last_req}, the requirements
  (a)--(d) of this proposition are satisfied with $x_0=\chi^*$, and $D_{ff}F$ is continuous.
  Taking for $Z=(\chi^*)^\perp$ for sake of concreteness, there is thus a neighbourhood
  $U\subset \mathbb R\times L^2(\nu^*)$ of $(h^*,0)$ such that all
  non-trivial (that is non-zero) solutions of $F(h,f) = 0$ in $U$ can be
  written as
  $\set{(\varphi (\alpha), \alpha \chi^* + \alpha \psi(\alpha)) : \abs{\alpha} < a}$
  for some $a > 0$ and $C^1$ functions $\varphi : (-a,a) \to \R$,
  $\psi: (-a, a) \to (\chi^*)^\perp \subset L^2(\nu^*)$ with $\varphi (0) = h^*$, $\psi(0) = 0$.
  Further, by \eqref{eq:bifurcation2},
  \begin{equation}
    \label{eq:derivatives}
    \frac{1}{2} D_{ff}F(h^*, 0)(\chi^*, \chi^*) + D_f F(h^*, 0)(\psi'(0))
    + \varphi '(0)D_{hf}F(h^*, 0)(\chi^*) = 0.
  \end{equation}

  Since, $\varphi (\alpha) = h^* + \alpha \varphi '(0) + o(\alpha )$ as
  $\alpha \to 0$, and thus
  $\varphi^{-1}(h) = \frac{h-h^*}{\varphi '(0)} + o(h-h^*)$ as $h\to h^*$,
  the non-trivial solution $\alpha \chi^* + \alpha \psi(\alpha )$ can be expanded as
  a function of $h$, in a neighbourhood of $h^*$,
  \begin{equation}
    \label{eqn:fexpansion}
     \varphi^{-1}(h) \chi^* + \varphi^{-1}(h) \psi(\varphi^{-1}(h))
    = \frac{h - h^*}{\varphi '(0)} (\chi^* + r^*_h),
  \end{equation}
  where the reminder function $r_h^*$ satisfies
  $\lim_{h\to h^*} \norm{r^*_h}_{L^2(\nu^*)} =0 $.

  Recall now from \eqref{eq:shifted_percolation_eq}, \eqref{eq:F} that
  $F(h,f)=0$ is the equation for the shifted forward percolation
  probability $\widetilde \eta_h $. Shifting everything back by
  $\theta_{h}^{-1}$, using the fact that by the continuity of the percolation
  probabilities (Corollary~\ref{cor:continuity}), the solution obtained
  from the bifurcation analysis must agree with $\eta^{+}(h,a)$, we
  obtain from \eqref{eqn:fexpansion} that
  \begin{equation}
    \label{eq:eta_tilde}
    \eta^+(h,\cdot) = \frac{h - h^*}{\varphi '(0)}
    \theta_{h}^{-1}(\chi^* + r^*_h)
    = \frac{h - h^*}{\varphi '(0)}
    \big(\chi  + (\theta_{h^*-h}\chi -\chi ) +
      \theta_{-h}r_h^*\big).
  \end{equation}

  We first show that
  \begin{equation}
    \label{eq:rhnorm}
    r_h^+ := (\theta_{h-h^*}\chi -\chi ) +\theta_{-h}r_h^* \to 0
    \qquad\text{as $h\to h^*$ in }L^{2-\varepsilon }(\nu ).
  \end{equation}
  For the first summand $\theta_{h-h^*}\chi -\chi$, this follows easily
  from the continuity of
  $\chi $ (Proposition~\ref{prop:L_h-chi-connection}), growth estimates on
  $\chi $ (Proposition~\ref{prop:chi_bounds}) and the dominated convergence
  theorem. For the second summand, it holds that
  \begin{equation}
    \begin{split}
      \norm{\theta_{-h}r_h^*}_{L^{2-\varepsilon }(\nu )}^{2-\varepsilon}
      &= \int \abs{r_h^*(a-h)}^{2-\varepsilon }
      \rho_\nu (a) \D a
      \\&= \int \abs{r_h^*(a)}^{2-\varepsilon }
      \frac{\rho_\nu (a+h)}{\rho_{\nu ^*}(a)} \rho_{\nu^*}(a)  \D a
      \\&\le  \norm{r_h^*}_{L^2(\nu^*)}^{2-\varepsilon }
      \norm[\Big]{\frac{\rho_\nu (a+h)}{\rho_{\nu ^*}(a)}}_{L^{2/\varepsilon
      }(\nu^*)},
    \end{split}
  \end{equation}
  where we applied Hölder's inequality on the last line.
  The first factor on the right-hand side converges to $0$ as $h\to h^*$,
  and the second factor remains bounded, which completes the proof of
  \eqref{eq:rhnorm}.

  We proceed by computing $\varphi '(0)$. To this end we project
  \eqref{eq:derivatives} to the $\chi^*$ direction by applying
  $\inp{\chi^*}{\cdot}_{\nu^*}$ on both sides. Note that, by
  Lemma~\ref{lem:dimN_codimR}, the range of $D_f F(h^*,0)$ is orthogonal
  to $\chi^*$, and thus
  $\inp{\chi^*}{D_f F(h^*, 0)(\psi'(0))}_{\nu^*} = 0$. Hence,
  \begin{equation}
    \varphi '(0) = - \frac{\inp{\chi^*}{D_{ff}F(h^*,0)(\chi^*, \chi^*)}_{\nu^*}}
    {2 \inp{\chi^*}{D_{hf}F(h^*,0)(\chi^*)}_{\nu^*}}.
  \end{equation}
  The scalar product in the denominator was computed in
  \eqref{eq:inp_chi_Fhf}.  For the numerator, by
  Lemma~\ref{lem:second_part_derv_F},
  \begin{equation}
    D_{ff}F(h^*, 0)(\chi^*, \chi^*)
    = -d (d-1) H_{h^*}[\chi^*]H_{h^*}[\chi^*]
    = -\frac{d-1}{d} (\chi^*)^2,
  \end{equation}
  and therefore,
  \begin{equation}
    \inp{\chi^*}{D_{ff}F(h^*,0)(\chi^*, \chi^*)}_{\nu^*}
    = -\frac{d-1}{d} \inp{\chi^*}{(\chi^*)^2}_{\nu^*}
    = -\frac{d-1}{d} \inp{\chi}{\chi^2}_{\nu}.
  \end{equation}
  As consequence,
  \begin{equation}
    \label{eq:h_prime}
    \varphi '(0) = - \frac{1}{2} \frac{d+1}{d-1}
    \frac{\inp{\chi}{\chi^2}_\nu}{\inp{\chi^2}{ \Id}_\nu}.
  \end{equation}
  Claim~\eqref{eq:thm_percolation_prob_1} of the theorem then follows directly
  from \eqref{eq:eta_tilde}, \eqref{eq:rhnorm} and \eqref{eq:h_prime}.

  To obtain the asymptotics \eqref{eq:thm_percolation_prob_2} of $\eta$,
  note that for any $a \geq h^*$, using similar arguments in the proof of
  the recursion property \eqref{eq:self-reference},
  \begin{equation}
    \begin{split}
      \eta(h,a) &=
      P_a[\abs{\mathcal{C}_o^h} = \infty]
      = 1 - P_a[\abs{\mathcal{C}_o^h} < \infty] \\
      &= 1 - \Big( E_Y\Big[1-\eta^+\Big(h, \frac ad +Y\Big)\Big] \Big)^{d+1} \\
      &= (d+1) E_Y\Big[\eta^+\Big(h, \frac ad + Y\Big)\Big] \big(1+o(1)\big)
    \end{split}
  \end{equation}
  as $h \uparrow h^*$. The same argument applied to $\eta^{+}$ implies
  \begin{equation}
    \eta^*(h,a) = d E_Y\Big[\eta^+\Big(h, \frac ad +
        Y\Big)\Big]\big(1+o(1)\big)
  \end{equation}
  and thus, for every $a\in \mathbb R$,
  \begin{equation}
    \lim_{h\uparrow h^*}\frac{\eta(h, a)}{\eta^+(h,a)}= \frac{d+1}{d}.
  \end{equation}
  which together with \eqref{eq:thm_percolation_prob_1} shows
  \eqref{eq:thm_percolation_prob_2} and completes the proof.
\end{proof}

\def\arxiv#1{Preprint, available at \href{http://arxiv.org/abs/#1}{arXiv:#1}}
\providecommand{\bysame}{\leavevmode\hbox to3em{\hrulefill}\thinspace}
\providecommand\MR{}
\renewcommand\MR[1]{\relax\ifhmode\unskip\spacefactor3000
\space\fi \MRhref{#1}{#1}}
\providecommand\MRhref{}
\renewcommand{\MRhref}[2]%
{\href{http://www.ams.org/mathscinet-getitem?mr=#1}{MR#2}}
\providecommand{\href}[2]{#2}

\end{document}